\documentclass[10pt,a4paper]{article}
\usepackage{amsopn}

\usepackage{graphics}
\usepackage{graphicx}
\usepackage{epsfig}

\usepackage{amssymb}
\usepackage{amsthm}
\usepackage{amsmath}
\usepackage{amsfonts}
\usepackage{lipsum}
\usepackage[left=2.5cm,right=2.5cm,bottom=3.5cm,top=3.5cm]{geometry}
\usepackage{empheq}
\usepackage{epstopdf}
\usepackage{changepage}
\usepackage{rotating}
\usepackage{adjustbox}
\usepackage{graphbox}
\usepackage{color}
\usepackage{xcolor}
\usepackage{dsfont}
\usepackage{dutchcal}
\usepackage{hyperref} 
\usepackage{float}
\usepackage{resizegather}
\usepackage{multicol}
\usepackage{booktabs} 
\usepackage{colortbl} 
\usepackage{multirow} 
\usepackage{longtable,tabu} 
\usepackage{hhline}   
\usepackage{anyfontsize} 
\usepackage{everysel}
\usepackage{psfrag}
\usepackage{titlesec}
\usepackage{bbm}
\usepackage{bm}
\usepackage{pgf,tikz}

\DeclareMathOperator{\dive}{\nabla \cdot}
\DeclareMathOperator{\gra}{\nabla}
\DeclareMathOperator{\grae}{\nabla}

\DeclareMathOperator{\dS}{\mathrm{dS}}
\DeclareMathOperator{\dV}{\mathrm{dV}}

\newcommand{\g}{\mathbf{g}}
\newcommand{\x}{\mathbf{x}}

\newcommand{\Flux}{\mathbf{\mathcal{F}}}

\newcommand{\btau}{\boldsymbol{\tau}}
\newcommand{\tbtau}{\boldsymbol{\tau}^{\ast}}

\newcommand{\press}{p}
\newcommand{\Press}{P}

\newcommand{\halb}{\frac{1}{2}}
\newcommand{\vel}{\mathbf{u}}

\newcommand{\W}{\mathbf{W}}
\newcommand{\hW}{\hat{\mathbf{W}}}
\newcommand{\Wvel}{\W}

\newcommand{\tW}{{\W}^{\ast}}
\newcommand{\htW}{\hat{\W}^{\ast}}
\newcommand{\ttW}{{\W}^{\ast\ast}}
\newcommand{\httW}{\hat{\W}^{\ast\ast}}
\newcommand{\tWa}[2]{{\W}^{\ast\,{#1}}_{#2}}
\newcommand{\tvel}{\vel^{\ast}}
\newcommand{\errres}{\mathbf{f}\left(\tW\right)}

\newcommand{\mass}{\mathbf{M}}
\newcommand{\stiffness}{\mathbf{K}}

\newcommand{\Dt}{\Delta t\,}


\newcommand{\nun}{\boldsymbol{n}}
\newcommand{\nw}{\boldsymbol{\eta}}

\newcommand{\Fpm}{\mathcal{C}}




\newtheorem{theorem}{Theorem}

\theoremstyle{remark}

\pagestyle{myheadings}
\hypersetup{
	pdftitle={An implicit staggered hybrid finite volume/finite element solver for the incompressible Navier-Stokes equations},    
	pdfauthor={A. Lucca, S. Busto, M. Dumbser},     
	pdfcreator={A. Lucca, S. Busto, M. Dumbser},   
	colorlinks=true,       
}

\title{An implicit staggered hybrid finite volume/finite element solver for the incompressible Navier-Stokes equations} 

\allowdisplaybreaks

\begin{document}

\begin{center}
	\textbf{ \Large{An implicit staggered hybrid finite volume/finite element solver for the incompressible Navier-Stokes equations} }
	
	\vspace{0.5cm}
	{A. Lucca\footnote{alessia.lucca@unitn.it}, S. Busto\footnote{saray.busto@unitn.it}, M. Dumbser\footnote{michael.dumbser@unitn.it}}
	
	\vspace{0.2cm}
	{\small		
		\textit{$^{(1)}$ Department of Mathematics, University of Trento, Via Sommarive 14, 38123 Trento, Italy}
		
		\textit{$^{(2)}$ Departamento de Matem\'atica Aplicada a la Ingenier\'ia Industrial, Universidad Polit\'ecnica de Madrid, Jos\'e Gutierrez Abascal~2, 28006, Madrid, Spain}
		
		\textit{$^{(3)}$ Laboratory of Applied Mathematics, DICAM, University of Trento, via Mesiano 77, 38123 Trento, Italy}
	}
\end{center}

\hrule

\thispagestyle{plain}
\begin{center}
	\textbf{Abstract}
\end{center} 

We present a novel fully implicit hybrid finite volume/finite element method for incompressible flows. Following previous works on semi-implicit hybrid FV/FE schemes, the incompressible Navier-Stokes equations are split into a pressure and a transport-diffusion subsystem. The first of them can be seen as a Poisson type problem and is thus solved efficiently using classical continuous Lagrange finite elements. On the other hand, finite volume methods are employed to solve the convective subsystem, in combination with Crouzeix-Raviart finite elements for the discretization of the viscous stress tensor. For some applications, the related CFL condition, even if depending only in the bulk velocity, may yield a severe time restriction in case explicit schemes are used. To overcome this issue an implicit approach is proposed. The system obtained from the implicit discretization of the transport-diffusion operator is solved using an inexact Newton-Krylov method, based either on the BiCStab or the GMRES algorithm. To improve the convergence properties of the linear solver a symmetric Gauss-Seidel (SGS) preconditioner is employed, together with a simple but efficient approach for the reordering of the grid elements that is compatible with MPI parallelization. 
Besides, considering the Ducros flux for the nonlinear convective terms we can prove that the discrete advection scheme is kinetic energy stable.  
The methodology is carefully assessed through a set of classical benchmarks for fluid mechanics. A last test shows the potential applicability of the method in the context of blood flow simulation in realistic vessel geometries.

\vspace{0.2cm}
\noindent \textit{Keywords:} 
hybrid finite volume / finite element method; finite volume scheme; continuous finite element method; incompressible Navier-Stokes equations; projection method; staggered implicit schemes; application to blood flow.
\vspace{0.4cm}

\hrule

\section{Introduction}\label{sec:intro}

The development of numerical methods for the simulation of incompressible flows is a wide field of research  since they allow the solution of many industrial, environmental and biological problems. 
One of those applications is the simulation of blood flow in the human cardiovascular system.  
Nowadays the study and treatment of diverse pathologies may require the use of invasive techniques, which might be a risk for the life of the patient. A common example of this situation is the presence of stenosis in a vessel. An abnormal narrowing in an artery may lead to many syndromes, lowering the life quality of the patient and even causing its death. To analyse in detail the impact of the stenosis without using invasive techniques an alternative may be the use of simulation tools, see e.g. \cite{TDKPSBWZ99,FTM17,fossan2018,lucca2023}. The physical geometry can be obtained via medical images which allow the definition of a computational domain to be used in numerical simulations, \cite{Steinman2002}. Therefore, having a methodology able to efficiently solve this kind of flows would constitute an important step forward in personalized medicine that might help the medics in their decisions they need to take concerning the optimal treatment of each patient. For a non-exhaustive overview of some computational methods that have already been successfully applied to the simulation of the human cardiovascular system, the reader is referred to   \cite{Quarteroni1,Quarteroni2,Quarteroni3,Quarteroni4,Formaggia:2009,Fung:1997,Quarteroni:2009,Quarteroni:2004,Sherwin:2003,Sherwin:2003b,Mueller2013,Mueller2014,Mueller2015} and references therein.

From the numerical point of view, some of the most widespread methodologies to simulate incompressible flows fall in the framework of pressure based semi-implicit solvers, see e.g. \cite{HW65,Cho67,PS72,Pat80,BCG89,Cas14,Guer06}. They rely on the computation of the pressure by deriving a Poisson-type equation from the mass and momentum conservation equations while an approximation of velocity is obtained from a transport-diffusion subsystem that is then updated with the contribution of the new pressure. Let us remark that performing an adequate splitting of the equations allows the decoupling of the bulk flow velocity from and the fast sound waves, \cite{TV12}. Then an unconditionally stable implicit algorithm can be used to solve the elliptic, linear and symmetric positive definite pressure subsystem, while the non-symmetric and nonlinear transport-diffusion subsystem, if solved explicitly, will be characterized by a CFL number that depends only on the bulk velocity. Within this framework, many families of methods have been developed depending on the approach selected to solve those subsystems including continuous finite elements (FE), \cite{BrezzFor,TaylorHood,BSS12,zien3,HR88}, discontinuous Galerkin schemes (DG), \cite{TD16,AMRDGSI,DFFMST18,TB19}, or finite volume methods (FV), \cite{HW65,RNKCC14,Toro}, among others. 
Moreover, extensions of pressure-based methods showing the potential of this approach to solve also all Mach number flows have been proposed in the last decades see, for instance, \cite{PM05,DC16,TD17,DLDV18,AbateIolloPuppo,BP2021,LPT22,BQRX22} and references therein.

Despite the wide variety of methods available inside each one of the aforementioned families, their combination within one algorithm is less common. In the framework of hybrid FV/FE schemes, a new family of methods combining FV and FE methods has been built for different mathematical models, including Newtonian and non Newtonian incompressible flows \cite{BFSV14,BFTVC17,HybridNNT}, weakly compressible flows \cite{Hybrid1,HybridALE}, all Mach number flows \cite{Hybrid2,HybridMPI} and the shallow water equations \cite{HybridSW}. The main idea behind this methodology is to employ a semi-implicit scheme on staggered unstructured meshes, where the pressure subsystem is discretized implicitly according to a classical finite element method, while the transport-diffusion subsystem is solved using an explicit finite volume scheme. 
Although the time step restriction imposed by the CFL condition, is less restrictive than for fully explicit schemes, the main concern of this methodology is that it still results too demanding when addressing haemodynamics in complex 3D geometries. Therefore, in order to make our methodology suitable for this  kind of applications in this paper we propose its extension to a fully implicit scheme in which Crouzeix-Raviart basis functions are used in combination with an implicit finite volume approach to discretize the convective-viscous system.

The use of an implicit scheme to solve the transport-diffusion equations leads to a nonlinear and nonsymmetric system that must be solved in each time step. To circumvent the direct solution of such systems a classical approach is the use of a Newton-Krylov method, with iterative solvers that make use of consecutive matrix-vector products and which allow a matrix-free implementation. In particular, we will employ a Newton algorithm combined with one of the following Krylov subspace methods: either a generalized minimal residual approach (GMRES), \cite{SS86}, or a stabilized biconjugate gradient algorithm (BiCGStab), \cite{Vorst92}. 

As it is well known, an important aspect when employing these iterative algorithms is the definition of an efficient preconditioner that would improve their convergence behaviour, \cite{ESW14,CGS02}. For instance, we can make use of the LU factorization symmetric Gauss-Seidel method (LU-SGS), \cite{YJ88,LBL98}, or one of its modified versions, like the one proposed by Menshov and collaborators, \cite{MN00}. When selecting the preconditioning technique to be used it is also very important to pay attention to its suitability for parallelization, \cite{CPTU18}, since we would like to design an method that is able to deal with a very large number of degrees of freedom. Note that to gain in efficiency depends also on a proper ordering of the nodes and the distribution of the computational load among the available CPUs is also important, \cite{CGS02}. For more advanced preconditioners, including also a special focus on haemodynamics, see  \cite{DGQ14,KOV15,Hoc20,LYDM20} and references therein.

As mentioned before, we will make use of an iterative linear solver to get the solution of the convective-diffusive subsystem. However, convective terms are clearly nonlinear. Therefore, we need to introduce a linearization allowing us to use the Newton-Krylov approach. To this end, two different options are mainly considered in the bibliography: linearization of the continuous equations and then performing the spatial discretization, \cite{KOV15}, or first performing a space discretization and then linearization only where it is strictly necessary, \cite{MN00}. In the present paper, we will consider \textcolor{black}{the} second approach. Consequently, once a numerical flux function is selected we may need to define the corresponding linearized version to be used inside the Krylov subspace methods. Meanwhile, the objective function in the Newton algorithm can take into account the original nonlinear flux. This duplicity of numerical flux functions could be avoided using, for instance, a semi-implicit Ducros flux function which is already linear in the unknown velocity field. Besides, the use of the Ducros flux in the proposed algorithm has as extra advantage that it results in a kinetic energy stable scheme, as proven in Section~\ref{sec.kineticenergystability}.

Let us note that the development of fully implicit schemes for the incompressible Navier-Stokes equations is a well-established subject and many contributions have already been made, see e.g. \cite{turek1996comparative,LBL98,BASSICPR2006,Bassi2007,NPC11,RCV13,LGZJ21,SEG21} and references therein. Nevertheless, most of those methods discretize the complete system of Navier-Stokes equations together, i.e. without considering a splitting of the momentum and pressure computation, and/or employ a unique family of numerical methods for the spatial discretization, in contrast with the new hybrid method proposed in this paper. The use of a splitting technique jointly with the discretization of the two resulting subsystems using different numerical methods in staggered grids provides a compact stencil that is well suited for an efficient parallelization thanks also to the use of a matrix-free approach for the computation of the pressure and momentum subsystems. 

The rest of the paper is organized as follows. In Section \ref{sec:numdisc1}, we recall the incompressible Navier-Stokes equations and present the new fully implicit staggered hybrid FV/FE method. First, to outline the overall algorithm, we focus on the semi-discretization in time and the splitting of the equations into a convective-diffusive subsystem and a pressure subsystem. Next, we introduce the staggered unstructured grids  used in our scheme. Within the convective-diffusion stage, we use an implicit finite volume methodology for the convective terms, while the viscous terms are discretized at the aid of implicit Crouzeix-Raviart elements. Next, we describe the inexact Newton-Krylov methods that are used to solve the resulting large and sparse nonlinear systems, based on matrix-free SGS-preconditioned BiCGStab or GMRES algorithms. 
Then, the projection stage is discretized using classical continuous Lagrange finite elements and the final  correction step is performed in the post-projection stage. 
Section \ref{sec:numericalresults} presents several classical benchmarks from computational fluid mechanics and the results obtained with the proposed algorithm are validated against either analytical solutions, numerical reference solutions or available experimental data. Moreover, the flow on a real 3D coronary tree is studied as last test case. We compare the computational efficiency of the new fully-implicit hybrid FV/FE method with the one of previous semi-implicit FV/FE schemes. In Section \ref{sec:conclusions}, the paper closes with some concluding remarks and an outlook to future research.

\section{Governing equations and numerical discretization} \label{sec:numdisc1}

\subsection{Governing equations} \label{sec:goveq}
As mathematical model we consider the incompressible Navier-Stokes system for Newtonian fluids which reduces to the momentum equation and the divergence-free condition of the velocity field. Denoting $\mathbf{w}=\rho\vel$ the momentum, $\mathbf{u}$ the velocity vector, $\rho$ the density, and $\press$ the pressure, we have

\begin{subequations}\label{eqn.NavierStokes}
\begin{align}
    \dive \vel &=0, \label{eq:sys1}\\
    \frac{\partial \mathbf{w}}{\partial t}+\dive \mathcal{F(\mathbf{w})}+\grae \press-\dive\btau &= \rho\g,  \label{eq:sys2}
\end{align}
\end{subequations}
where $\mathcal{F}$ is the convective flux tensor defined as  $\Flux(\mathbf{w})=\frac{1}{\rho} \mathbf{w} \otimes \mathbf{w}$, $\btau=\mu(\gra\vel+\gra\vel^t)$ is the viscous part of the Cauchy stress tensor, with $\mu$ the laminar viscosity, and $\g$ denotes the gravity vector. Let us remark that neglecting gravity effects, $\g=\boldsymbol{0}$, and assuming an inviscid fluid, $\mu=0$,  \eqref{eqn.NavierStokes} reduces to the incompressible Euler equations

\begin{subequations}\label{eqn.Euler}
\begin{align}
	\dive \vel &=0, \label{eq:mass_euler}\\
	\frac{\partial \mathbf{w}}{\partial t}+\dive \mathcal{F(\mathbf{w})}+\grae \press &=\boldsymbol{0},  \label{eq:momentum_euler}
\end{align}
\end{subequations}
which admit an extra conservation law for the kinetic energy density of the form

\begin{equation}
	\frac{\partial}{\partial t} \left(\halb \rho  \vel^2 \right) + \dive \left(\vel  \, \halb \rho  \vel^2 \right) 	+ \dive \left( \vel \, \press \right) = 0.	\label{eqn.kinen.euler} 
\end{equation}
This property may be also desirable at the discrete level and the kinetic energy stability of the proposed scheme will be analyzed in Section \ref{sec.kineticenergystability}.

\subsection{Overall algorithm} \label{sec:numdisc}

Following the methodology already employed in \cite{BFSV14,BFTVC17,HybridMPI}, to discretize the Navier-Stokes equations we make use of a projection method which decouples the computation of the momentum and pressure unknowns. Denoting by $\W^{n}$, $\Press^{n}$ the discrete approximation of the momentum and pressure at time $t^{n}$, namely $\mathbf{w}(\x,t^{n})$ and $\press(\x,t^{n})$, and performing a semi-discretization in time of \eqref{eqn.NavierStokes}, while all spatial operators are still kept continuous, and since we assume a constant density $\rho$ we have
\begin{subequations}\label{eqn.semi-discreteNavierStokes}
\begin{align}
	\frac{1}{\Delta t}(\tW-\Wvel^n)+\dive\Flux(\W^*)+\gra \Press^n-\dive\btau^*&=\rho\g, \label{eq:timesys1}\\
	\frac{1}{\Delta t}(\W^{n+1}-\tW)+\gra(\Press^{n+1}-\Press^n)&=0, \label{eq:timesys2}\\
	\dive \Wvel^{n+1}&=0.
	\label{eq:timesys3}
\end{align}
\end{subequations}
In the former system, we have introduced an intermediate approximation of the momentum field,  $\tW$, which accounts for the update of convective and diffusive terms but that does not necessarily verify the divergence free condition yet. Combining \eqref{eq:timesys2} with \eqref{eq:timesys3} yields the well-known pressure-Poisson equation for the pressure correction 
\begin{equation}
	\nabla^2 \left( \Press^{n+1}-\Press^n \right) = \frac{1}{\Delta t} \nabla \cdot \tW. 
	\label{eqn.pressure.poisson} 
\end{equation}

Then, to get the final value of the momentum, $\tW$ must be corrected using the pressure at the new time step as
\begin{equation}
	\W^{n+1}= \tW-\Delta t \gra(\Press^{n+1}-\Press^n).\label{eqn.momentum_correction}
\end{equation}
Choosing the momentum $\W$ on the convective and viscous terms of \eqref{eq:timesys1} to be the one given at time $t^n$, then we would derive an explicit approach for its discretization, as it has been done in \cite{BFSV14,BFTVC17,HybridMPI,HybridNNT}. Contrarily, taking $\W:=\tW$ we have the sought implicit approach where the following two subsystems need to be solved:

\textbf{Transport diffusion subsystem}
\begin{equation}
	\frac{1}{\Delta t}(\tW-\Wvel^n)+\dive\Flux(\tW)+\gra \Press^n-\dive\btau(\tW)=\rho\g. \label{eqn.momentum}
\end{equation}

\textbf{Pressure subsystem}
\begin{align}
	\frac{1}{\Delta t}(\W^{n+1}-\tW)+\gra(\Press^{n+1}-\Press^n)&=0, \label{eqn.pressure1}\\
	\dive\vel^{n+1}&=0. \label{eqn.pressure2}
\end{align}
The algorithm for the discretization of the equations above is subdivided into three main stages where different numerical methods are employed according to the nature of the equations, \cite{BFSV14,BFTVC17,Hybrid1,Hybrid2,HybridMPI,HybridNNT,HybridSW,HybridALE}:
\begin{enumerate}
	\item Transport-diffusion stage. System \eqref{eqn.momentum} is solved implicitly making use of a Newton-Krylov approach. The spatial discretization of the equations is done by combining an implicit finite volume scheme for the computation of the convective terms and the use of Crouzeix-Raviart basis functions for the calculation of viscous terms.
	
	\item Projection stage. The pressure subsystem \eqref{eqn.pressure1}-\eqref{eqn.pressure2} is solved implicitly via classical $\mathbb{P}^{1}$ continuous finite elements directly applied to the pressure-Poisson equation \eqref{eqn.pressure.poisson}.
	
	\item Post-projection stage. Taking into account \eqref{eqn.momentum_correction}, the intermediate momentum, $\tW$, computed at the first stage, is updated using the pressure computed in the projection stage and thus providing the final approximation $\W^{n+1}$. 
\end{enumerate}

Let us note that in \eqref{eqn.semi-discreteNavierStokes}, we have considered a pressure-correction technique so that the pressure subsystem depends not only on the new pressure, but also on the difference of pressures between two subsequent time steps, $\delta\Press:=\Press^{n+1}-\Press^n$. A slightly different scheme could be developed by avoiding the pressure term in the momentum equation:
\begin{subequations}\label{eqn.semi-discreteNavierStokes_npc}
	\begin{align}
		\frac{1}{\Delta t}(\tW-\Wvel^n)+\dive\Flux(\tW)-\dive\btau(\tW) &=\rho\g, \label{eq:timesys1_npc}\\
		\frac{1}{\Delta t}(\W^{n+1}-\tW)+\gra \Press^{n+1}&=0, \label{eq:timesys2_npc}\\
		\dive\W^{n+1}&=0.		\label{eq:timesys3_npc}
	\end{align}
\end{subequations}
This approach can also be derived using an appropriate splitting of the Navier-Stokes equations, \cite{TV12,TD14,HybridALE}.

\subsection{Staggered unstructured meshes}
An important aspect of the proposed methodology is that the spatial discretization will be performed using staggered grids of the face-type, based on a primal mesh made of triangles (2D)/tetrahedra (3D), \cite{BDDV98,VC99,USFORCE2,BFTVC17,BTBD20}. 
Denoting by $T_k$ a primal element, $\{T_k,\, k=1,\cdots,n_{\mathrm{primal}}\}$ being the set of primal elements covering the computational domain, we can build half dual elements $T_{k,\, l}$, with $l$ an index on the number of boundaries of a primal element, by computing the barycentres of the primal cells, $B_k$, and connecting them to the vertices of the edges/faces, $V_{m}$. In case an edge/face is located at the interior of the domain, we glue together the two subelements related to this face so that the resulting quadrilateral/polyhedron will constitute the dual cell $C_i$ with area/volume $|C_i|$. For boundary faces the related dual elements coincide with the corresponding primal subelements. In Figure \ref{fig:primal2dualmesh} a sketch on the staggered mesh construction in 2D is shown.
We can observe that the nodes $N_i$, $i=1,\cdots,n_{\mathrm{dual}}$, of the dual mesh are defined as the barycentres of the edges/faces of the elements of the primal mesh, which correspond to the location of the nodes of the Crouzeix-Raviart basis functions used later in this work. Given a node $N_i$ its set of neighbours, $\left\lbrace N_j,\, j\in\mathcal{K}_i\right\rbrace $, is composed of the remaining dual cells build on the edges/faces of the two primal elements to which it belongs. Moreover, the boundary of $C_i$ is defined as $\partial C_i=\Gamma_i=\bigcup\limits_{j\in\mathcal{K}_i} \Gamma_{ij}$ where $\Gamma_{ij}$ is the interface between cell $C_i$ and cell $C_j$. We denote the outward unit normal vector of $\Gamma_{ij}$ as $\nun_{ij}$ and we define $\nw_{ij}=\nun_{ij}||\nw_{ij}||$, where $||\nw_{ij}||=|\Gamma_{ij}|$ is the length/area of $\Gamma_{ij}$. 
Moreover, the barycenter of the edge/face between two dual cells $C_i$ and $C_i$ is denoted by $N_{ij}$.
\begin{figure}
	\centering
	\includegraphics[width=0.4\linewidth]{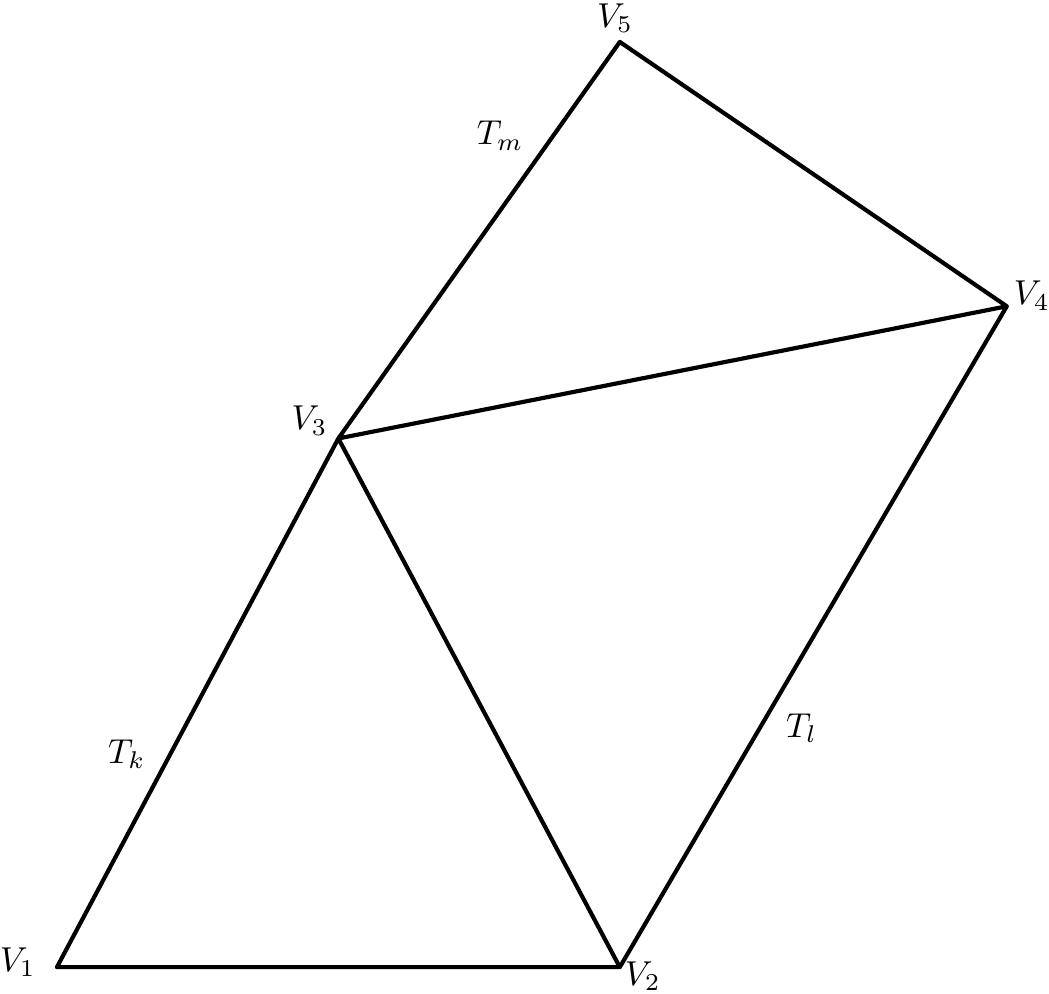}
	\includegraphics[width=0.4\linewidth]{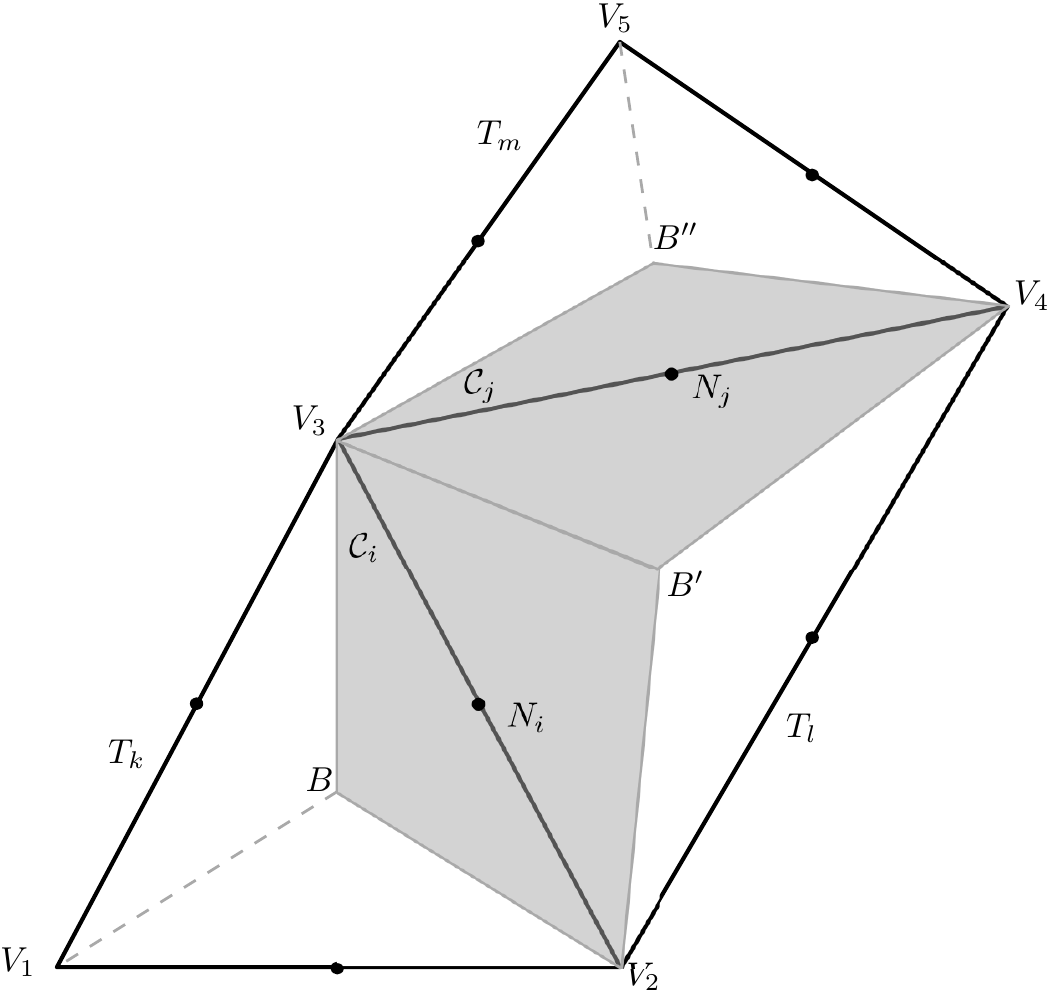}
	\caption{Construction of the staggered grids in 2D. Left: primal triangular grid with elements $T_k$, $T_l$, $T_m$ of vertex $\{V_1,\,V_2,\, V_3\}$, $\{V_2,\,V_3,\, V_4\}$ and $\{V_3,\,V_4,\, V_5\}$ respectively. Right: dual elements $C_i$ and $C_j$ (shadowed in grey) with corresponding nodes $N_i$, $N_j$ and vertex the barycenters of the primal elments $B$, $B'$, $B''$ and the primal vertex $V_2$, $V_3$ and $V_4$. }
	\label{fig:primal2dualmesh}
\end{figure}

As it can be seen in the following sections, this kind of mesh may have several advantages:
\begin{itemize}
	\item Complex geometries can be easily discretized thanks to the use of primal meshes composed of simplex elements, i.e. triangles / tetrahedra. 
	\item An exact interpolation from the primal vertex to the dual cells and back to the primal mesh can be employed for interpolation between the staggered grids, \cite{HybridNNT}.
	\item Even for a second order accurate approach in space, a compact stencil can be obtained with the proper combination of different numerical methods. 
	\item The dual mesh structure together with an appropriate choice for the location of the degrees of freedom results on an easy coupling between the finite volume and the Crouzeix-Raviart approaches as well as on a simple transfer of data with the continuous finite element scheme.
\end{itemize}

\subsection{Implicit transport-diffusion stage}\label{subsec:transdiff} 

Differently from what has been previously done in the context of hybrid FV/FE schemes \cite{Hybrid1,Hybrid2,HybridMPI,HybridALE}, we propose an implicit algorithm for the discretization of the transport-diffusion subsystem \eqref{eqn.momentum}. Accordingly, we get the system
\begin{equation}
	\tW+\Delta t\dive\Flux(\tW)-\Delta t\dive\tbtau = \Wvel^n - \Delta t\gra \Press^n+ \Delta t\rho\g, \label{eq.implicitmomentum}
\end{equation}
with $\tW$ the intermediate momentum, that may not be divergence free yet, the flux tensor $\Flux(\tW)$ and the viscous stress tensor $\btau\left(\tW\right)$, depending on the new $\tW$. Note that the pressure gradient term is also included, but computed explicitly from the pressure obtained at the previous time step, i.e., using a pressure correction approach. Therefore, the momentum will be updated in the post-projection stage to account for the new pressure.

The convective term in \eqref{eq.implicitmomentum} is nonlinear and in general couples all momentum equations with each other, hence employing a direct solver for the approximation of the solution of \eqref{eq.implicitmomentum} at each time step may be too demanding from a computational point of view.
A well known technique to circumvent this problem is employing an inexact Newton algorithm, see \cite{DES82}. This approach allows the use of implicit schemes for nonlinear equations at a reasonable cost by linearizing the discretization of the nonlinear convective term. Hence, to get $\tW:=\Wvel^{n}+\delta \tW$ we employ an inexact Newton algorithm combined with a Krylov subspace method for the solution of linear systems, see also \cite{Bellavia1,Bellavia2,Bellavia3} and references therein.  
The objective is thus to find the root of the vector function
\begin{equation}
	\mathbf{f}\left( \tW \right) = \tW -\W^n + \Delta t\dive\Flux(\tW)-\Delta t\dive\tbtau  + \Delta t\gra \Press^n- \Delta t\rho\g=0. \label{eq.bigfunc}
\end{equation}
which is achieved via the following \textit{inexact} Newton method, using $\tW_0 = \Wvel^n$ as initial guess and the usual Newton iteration with line-search globalization,   
\begin{eqnarray}
	\mathbf{J}\left( \tW_k \right) \, \Delta \tW_k &=& -\mathbf{f}(\tW_k), \label{eqn.newt.linsys} \\ 
	\tW_{k+1} & = & \tW_{k} + \tilde{\delta}_k \, \Delta \tW_k, \label{eqn.newt.update} 
\end{eqnarray}
where $\Delta \tW_k$ is the Newton step, $\mathbf{J}\left( \tW_k \right)  = \partial \mathbf{f}\left( \tW \right) / \partial \tW$ is the Jacobian of the nonlinear function $\mathbf{f}\left( \tW \right)$ in \eqref{eq.bigfunc} and $0 \leq \tilde{\delta}_k \leq 1$ in \eqref{eqn.newt.update} is a suitably chosen scalar  for a simple linear search globalization technique that in each Newton iteration guarantees 
\begin{equation}
\left\| \mathbf{f}\left( \tW_{k+1} \right) \right\| < \left\| \mathbf{f}\left( \tW_{k} \right) \right\|.
\label{eqn.linesearch} 
\end{equation}
Throughout this paper we initially set $\tilde{\delta}_k=1$ and then divide it by two until \eqref{eqn.linesearch} holds. 
In our inexact Newton algorithm the tolerance of the linear solver for \eqref{eqn.newt.linsys} is dynamically set to $\epsilon_k = 10^{-2} \left\| \mathbf{f}\left( \tW_k \right) \right\|$ in the cases where the fully implicit Rusanov flux is used for the discretization of the nonlinear convective terms. When instead the semi-implicit Ducros flux is employed, the resulting system \eqref{eq.bigfunc} is already linear and hence only one single Newton iteration is used. In this case, we set $\epsilon_k = 10^{-10}$.  \textcolor{black}{In order to obtain a more sophisticated initial guess, rather than simply setting  $\tW_0 = \Wvel^n$ as done in this paper, in the future one may also consider the use of a semi-Lagrangian scheme for the advection-diffusion terms, like the ones successfully employed in \cite{Casulli1990,CasulliCheng1992,TBR13,SIDGConv,HOSemiLagDG,BonaventuraSL}. The use of a semi-Lagrangian method could provide a physically more meaningful value for  $\tW_0$, in particular for large time steps $\Delta t$. However, this is clearly out of the scope of the present paper and left to future research. }

Once the residual $\left\| \mathbf{f}\left( \tW_k \right) \right\|$ is below a prescribed tolerance $\epsilon$, the Newton iterations stop and the last $\tW_k$ computed is taken as the input value for the projection stage. 
Let us stress again that if the numerical flux function used for the approximation of the nonlinear convective terms is linear (like the semi-implicit Ducros flux shown later in this paper), then the Newton algorithm is no longer necessary and only the Krylov subspace method needs to be applied in order to solve the resulting linear system providing $\tW$. 

In what follows, we will detail the different parts of the proposed implicit method. First we will focus on the computation of the residual needed to define the stop criterium for the Newton method. Next, the computation of the approximated solution at the new time step using Krylov subspaces methods is described. Finally, the use of a preconditioner for the system solver is addressed.

\subsubsection{Residual computation}\label{sec.residualcomputation}
To calculate the residual $\mathbf{f}\left( \tW \right)$ we proceed in two steps. First, we compute the contribution of the nonlinear convective terms, the pressure at time $t^n$ and of the gravity source to the residual, denoted by $\delta\ttW$ and defined as 
\begin{equation}
	\delta\ttW = \Delta t\dive\Flux(\tW)  + \Delta t\gra \Press^n- \Delta t\rho\g. \label{eq.bigfunc_conv}
\end{equation}
It is computed via an implicit finite volume method on the dual mesh detailed later. Then, the obtained result  is included in the final equation that contains also the viscous contribution, i.e.  
\begin{equation}
	\mathbf{f}\left( \tW \right) = \tW - \W^n + \delta\ttW - \Delta t\dive\tbtau = 0, 
	\label{eq.bigfunc_visc}
\end{equation}
where $\dive\tbtau$ will be approximated using Crouzeix-Raviart finite elements on the primal grid.

\paragraph{Convective-pressure contribution to the residual.}
Let us start focusing in \eqref{eq.bigfunc_conv}. 
Integrating on a control volume $C_{i}$ and applying Gauss theorem to the convective terms, we have

\begin{equation}
	\delta \ttW_{i} = \frac{\Delta t}{\left|C_{i}\right|} \int\limits_{\Gamma_{i}} \Flux(\tW)\cdot \nun_{i} \dS
	+ \frac{\Delta t}{\left|C_{i}\right|} \int\limits_{C_{i}} \left( \grae \Press^{n}\right)_{i}   \dV
	- \frac{\Delta t}{\left|C_{i}\right|} \int\limits_{C_{i}} \rho_{i}\g \dV. \label{eq.bigfuncFV}
\end{equation} 
Given a $\tW$ the contribution due to advection,
\begin{equation}
	 \frac{\Delta t}{\left|C_{i}\right|} \int\limits_{\Gamma_{i}} \Flux(\tW)\cdot \nun_{i} \dS
	 =  \frac{\Delta t}{\left|C_{i}\right|} \sum_{j \in\mathcal{K}_{i}} |\Gamma_{ij}| \Flux(\tW_i,\tW_j)\cdot \nun_{ij},
	 \label{eqn.flux_residual}
\end{equation}
with $\mathcal{K}_{i}$ the set of neighbors of cell $C_i$, can be computed, for example, using a fully-implicit Rusanov flux function, $\phi^{\mathrm{R}}$,  of the form
\begin{align}
	\Flux^{\mathrm{R}}(\tW_i,\!\tW_j)\!\cdot\! \nun_{ij} & = \phi^{\mathrm{R}}(\tW_{i},\!\tW_{j},\!\W^{n}_{i},\!\W^{n}_{j},\!\nun_{ij})\notag\\ &= \halb \left( \Flux(\tWa{}{i}) +\Flux(\tWa{}{j})\right) \cdot \nun_{ij} - \halb \alpha_{ij}^{r}\left( \tW_{j}-\tW_{i} \right) \! ,\notag \\ 
	\alpha_{ij}^{r} &= 2 \left|u_{ij}^{n}\right|+c_{\alpha}, \qquad 
	u_{ij}^{n} = \halb \left(\vel^{n}_{i}+\vel^{n}_{j} \right) \cdot \nun_{ij}, \label{eqn.semiimplicitrusanov}
\end{align}
with $\nun_{ij}$ the unit normal vector of the face $\Gamma_{ij}$, outward oriented with respect to cell $C_i$, and $c_{\alpha}$ an extra artificial numerical viscosity coefficient which may increase the robustness of the final scheme (see \cite{Hybrid2,HybridALE} for a detailed analysis on the role and use of this coefficient). 
Instead of employing a fully-implicit Rusanov flux function one can also consider other numerical fluxes, such as a semi-discrete Ducros flux \cite{Ducros1999,Ducros2000,Moin2009}. In this case the numerical flux function $\phi^{\mathrm{D}}$ reads
\begin{align}
	\Flux^{\mathrm{D}}(\tW_i,\!\tW_j)\!\cdot\! \nun_{ij} & =  \phi^{\mathrm{D}}(\tW_{i},\!\tW_{j},\!\W^{n}_{i},\!\W^{n}_{j},\!\nun_{ij})	\notag \\  
	&= \halb\! \left( \tWa{}{i} +\tWa{}{j}\right) u_{ij}^{n} - \halb \alpha_{ij}^{d}\left( \tW_{j}-\tW_{i} \right) \! ,
	\notag \\ 		
	u_{ij}^{n} & = \halb \left(\vel^{n}_{i}+\vel^{n}_{j} \right) \cdot \nun_{ij}, 
	\qquad 
	\alpha_{ij}^{d} = \left|u_{ij}^{n}\right|+c_{\alpha},   
	\label{eqn.semiimplicitducros}
\end{align}
which is \textit{linear} in $\tW$, hence the Newton iterations outside the Krylov subspace algorithm are no longer needed. Consequently, the computational cost of the overall algorithm reduces. For numerical evidence of this fact see the numerical results reported in Section \ref{sec:numericalresults_TGV}.
Let us note that the Ducros flux function has also as second advantage: it is provably kinetic energy stable, as demonstrated in Section~\ref{sec.kineticenergystability}. 

Regarding the pressure term, we make use of the pressure at the previous time step, $\Press^{n}$, which is defined at the vertices of the primal grid. Thus its gradient can be easily approximated inside each primal element using a Galerkin approach based on $\mathbb{P}^{1}$ finite element basis functions. Then, a weighted average of the obtained values at the two parts of each finite volume gives a constant value in each dual element, $\left( \grae \Press^{n}\right)_{i}$ that is integrated in the cell.

Finally, assuming the gravity and the density to be constant in the dual elements, the source term can be directly integrated in each cell. In case more complex algebraic source terms are given, they are integrated on each cell using a sufficiently accurately numerical quadrature rule.

\paragraph{Viscous contribution to the residual.} \label{sec:viscousterms}
To discretize the viscous terms we make use of Crouzeix-Raviart basis functions on the primal grid, which have as nodes the barycenters of the edges/faces, i.e. the nodes of the dual mesh of the finite volume scheme where $\delta\ttW$ has been computed. Multiplying \eqref{eq.bigfunc_visc} by a test function $\varphi_{\ell}$ of the Crou\-zeix-Raviart type, integrating over the domain $\Omega$, applying integration by parts to the viscous terms and neglecting boundary contributions we get
\begin{equation}
	 \errres = \int\limits_{\Omega} \left( \tW - \W^n \right) \varphi_{\ell} \dV 
	+\int\limits_{\Omega} \delta\ttW \varphi_{\ell}\dV
	+\Delta t \int\limits_{\Omega}  \mu \gra \tW \cdot \gra \varphi_{\ell}\dV = 0. \label{eqn.bigfunc_visc_integ}
\end{equation}
We now write $\tW$, $\W^{n}$ and $\delta\ttW$ as a linear combination of the basis functions and 
the degrees of freedom $\tW$, $\W^{n}$ and $\delta\ttW$,
\begin{equation}
	\tW = \sum_{i=1}^{N} \varphi_{i} \tW_{i},\qquad \W ^{n}= \sum_{i=1}^{N} \varphi_{i} \W^{n}_{i},\qquad \delta \ttW = \sum_{i=1}^{N} \varphi_{i} \delta \ttW_{i} \label{eqn.linearcombination}
\end{equation}
with $N$ the number of basis functions, three in 2D and four in 3D. The degrees of freedom of the Crouzeix-Raviart elements are located at the barycenters of the primal faces, which can be also seen as the nodes of the dual elements that are used in the finite volume method for the nonlinear convective terms. Hence, the degrees of freedom of the CR elements \textit{coincide} with the data used in the finite volume scheme, which explains the choice for these particular finite elements to discretize the viscous terms in the new implicit hybrid FV/FE method presented in this paper. Substituting \eqref{eqn.linearcombination} in \eqref{eqn.bigfunc_visc_integ}, and denoting by $\mass$ and $\stiffness$ the global mass and stiffness matrix, respectively, yields the following discrete (nonlinear) system 
\begin{equation}
   \mathbf{f} \left( \htW \right) = \mass \left( \htW-\hW^{n}\right) 
	+ \mass \, \delta\httW
	+ \Delta t \mu \stiffness \, \htW = 0.
\end{equation}
Note that, to evaluate the above equation, we proceed via a loop on the primal elements which provides the value of the residual on each Crouzeix-Raviart vertex and it can be taken as the approximation on the corresponding dual cell. Thus there is no need to construct the global mass and stiffness matrix; the computation is done in a matrix-free manner, so communication between CPUs is minimized when using MPI parallelization. The element-local mass matrices of CR elements are diagonal in two space dimensions, 
but they are not in 3D. We therefore employ \textit{mass lumping} in order to obtain a diagonal 
mass matrix in three-dimensional calculations.

\subsubsection{Computation of $\tW$}\label{sec:wstarcomputation}

The value of the intermediate velocity at each Newton iteration, $\tW_k$, is computed using an iterative solver for nonsymmetric systems embedded within the Newton algorithm \eqref{eqn.newt.linsys}-\eqref{eqn.newt.update}. In particular, we consider two options: the generalized minimal residual method (GMRES), \cite{SS86}, and a biconjugate gradient stabilized method (BiCStab), \cite{Vorst92}. Both methods can be used to solve general nonsymmetric linear systems like the one in \eqref{eqn.newt.linsys}, thus a linearisation of the numerical flux is performed in the computation of the matrix-vector product of the Jacobian $\mathbf{J}(\mathbf{W}_k^*)$ with the sought newton step $\Delta \mathbf{W}_k^*$.

The selected iterative methods are based on obtaining iteratively and approximate solution of system   \eqref{eqn.newt.linsys} via a series of matrix-vector products, i.e. we never assemble or compute the Jacobian directly, but make use of a matrix-free implementation of the linear solver. 

\paragraph{Matrix-vector product.}
The semi-implicit Ducros flux function \eqref{eqn.semiimplicitducros} is already linear, hence the associated matrix-vector product needed by the linear solver simply reads 
\begin{equation} 
	\frac{\partial \Flux^{\mathrm{D}}(\tW_i,\!\tW_j) \!\cdot\! \nun_{ij}}{\partial (\tW_i, \tW_j )} \cdot (\Delta \tW_i,  \Delta \tW_j )   
	=  \halb u_{ij}^{n} \left( \Delta \tWa{}{i} + \Delta \tWa{}{j}\right)  - \halb \alpha_{ij}^{d}\left( \Delta \tW_{j} - \Delta \tW_{i} \right). 
	\label{eqn.lin.ducros}
\end{equation} 
On the other hand, the linearization of the Rusanov flux function in normal direction, which is quadratic in $\tW$, reads
\begin{equation} 
	\frac{\partial \Flux^{\mathrm{R}}(\tW_i,\!\tW_j) \!\cdot\! \nun_{ij}}{\partial (\tW_i, \tW_j )} \cdot (\Delta \tW_i,  \Delta \tW_j )   
	=  
	 \left( u_i^* \Delta \tWa{}{i} + u_j^* \Delta \tWa{}{j}\right)  - \halb \alpha_{ij}^{r}\left( \Delta \tW_{j} - \Delta \tW_{i} \right), 
	\label{eqn.lin.rusanov}
\end{equation} 
with the normal velocities 
\begin{equation}
    u_i^* = \frac{1}{\rho_i} \tW_i \cdot \nun_{ij}, \qquad 
	u_j^* = \frac{1}{\rho_j} \tW_j \cdot \nun_{ij}, 
\end{equation}
and where $\tW_{i},\,\tW_{j}$ denote the intermediate momentum in cell $C_i$ and $C_j$ of the current Newton iteration, $\Delta \tW_{i},\, \Delta \tW_{j}$ the associated Newton steps, while $\W_{i}^{n},\W_{j}^{n}$ are the momentum obtained at the previous time step. Note that in order to ease the linearization of the Rusanov flux the numerical dissipation coefficient $\alpha_{ij}^n$ has deliberately been computed at the old time $t^n$. 

Finally, to account for the viscous fluxes we use the Crouzeix-Raviart approach presented in Section~\ref{sec:viscousterms}, which is already linear in the unknowns and thus the application of the matrix-vector product in the linear solver is immediate. 

\paragraph{Preconditioner.}
To improve the convergence of the iterative solver, a preconditioner is used together with the GMRES or BiCGStab algorithms. In particular, we consider a Symmetric Gauss-Seidel (SGS) method,  \cite{YJ88,LBL98,CPTU18,MN00}.  
Consequently,  instead of directly solving \eqref{eqn.newt.linsys} we solve an equivalent, explicitly preconditioned system of the form 
\begin{equation} 
	\widetilde{\mathbf{J}}\left(  \tW_k  \right) \, \Delta\tW_k = -\widetilde{\mathbf{f}}\left( \tW_k \right), 
\end{equation} 	
where the preconditioned Jacobian matrix  $\widetilde{\mathbf{J}}\left(  \tW_k  \right)$ and the preconditioned right hand side $\widetilde{\mathbf{f}}\left( \tW_k \right)$ are obtained by multiplying the original system by a preconditioning matrix $\mathcal{P}^{-1}$ from the left as: 
\begin{equation}
	\mathcal{P}^{-1} {\mathbf{J}}\left(  \tW_k  \right) \, \Delta\tW_k = - \mathcal{P}^{-1} {\mathbf{f}}\left( \tW_k \right). 
\end{equation} 
Hence, within the Krylov method, after each matrix-vector product we apply the preconditioner (see \cite{LBL98} for a detailed pseudocode in the framework of GMRES algorithm).

The LU-SGS preconditioning developed for hyperbolic equations \cite{YJ88,LBL98,CPTU18,MN00} is based on the idea of using the linearized numerical flux function, already employed inside the Krylov algorithm, to construct $\mathcal{P}$. Since the momentum equations include viscous terms, they are also taken into account in the preconditioner using the Crouzeix-Raviart finite element approach already employed in Section \ref{sec:wstarcomputation}. Note that the SGS method proposed in \cite{YJ88,LBL98,CPTU18,MN00} is slightly different from the standard SGS preconditioner \cite{MeisterLGS}. 
Formally, the Jacobian matrix $\mathbf{J}(\tW_k)$ is decomposed into its diagonal $\mathcal{D}$, a lower triangular matrix $\mathcal{L}$ and an upper triangular part $\mathcal{U}$ as follows: 
\begin{equation}
	\mathbf{J}(\tW_k) = \mathcal{L} + \mathcal{D} + \mathcal{U}.  
	\label{eqn.decomposition} 
\end{equation}
The SGS preconditioner according to  \cite{YJ88,LBL98,CPTU18,MN00} reads
\begin{equation}
	\mathcal{P}^{-1} = \left(  \mathcal{D} + \mathcal{U} \right)^{-1} \mathcal{D} \, \left( \mathcal{D} + \mathcal{L}  \right)^{-1}.  
	\label{eqn.sgs3} 
\end{equation}
The application of the preconditioner to a generic vector $\mathbf{Q}$, i.e. the calculation of $\mathcal{P}^{-1} \mathbf{Q}$, then consists in a forward sweep
\begin{equation}
	 \widetilde{\mathbf{Q}}  =  \left( \mathcal{D} + \mathcal{L}  \right)^{-1} \mathbf{Q},
	 \label{eqn.forward.sweep} 
\end{equation}
which is followed by a backward sweep
\begin{equation}
	 \mathcal{P}^{-1} \mathbf{Q} =  
	 \left(  \mathcal{D} + \mathcal{U} \right)^{-1} \mathcal{D} \, \widetilde{\mathbf{Q}}.
	 \label{eqn.backward.sweep} 
\end{equation} 
Thanks to the linearization of the numerical flux functions \eqref{eqn.lin.ducros}  or  \eqref{eqn.lin.rusanov}, both sweeps can be carried out in a matrix-free fashion, where in the forward sweep only flux contributions from elements with lower element number are taken into account and in the backward sweep only flux contributions from elements with higher element number.  

To obtain the contributions of the upper and lower triangular part easily on a general unstructured mesh we follow \cite{LBL98,CPTU18,MN00} and split the set of neighbors $\mathcal{K}_i$ of a cell $C_i$ into two subsets $\mathcal{K}_i^-$ and $\mathcal{K}_i^+$ with $\mathcal{K}_i = \mathcal{K}_i^- \cup \mathcal{K}_i^+$ so that elements $j \in \mathcal{K}_i^- < i$ and $ j \in \mathcal{K}_i^+ > i$. The contribution of the linearized convective terms to the Jacobian matrix of the Newton method is in the following denoted by 
\begin{equation}
	\Fpm = \Fpm^{L} + \Fpm^{D} + \Fpm^{U}. 
\end{equation}
The lower diagonal part of the matrix-vector product for a generic cell $C_i$ reads 
\begin{equation}
	\Fpm^{L}_i \Delta \tW =  \frac{\Delta t}{|C_i|}  \, \sum \limits_{j \in \mathcal{K}_i^-}  |\Gamma_{ij}| \, \frac{\partial \Flux(\tW_i,\!\tW_j) \!\cdot\! \nun_{ij}}{\partial \tW_j } \cdot \Delta \tW_j,  
\end{equation}
the contribution to the upper diagonal part is given by 
\begin{equation}
	\Fpm^{U}_i \Delta \tW =  \frac{\Delta t}{|C_i|} \, \sum \limits_{j \in \mathcal{K}_i^+}  |\Gamma_{ij}| \, \frac{\partial \Flux(\tW_i,\!\tW_j) \!\cdot\! \nun_{ij}}{\partial \tW_j } \cdot \Delta \tW_j,  
\end{equation}
while on the diagonal we have the contribution 
\begin{equation}
	\Fpm^{D}_i \Delta \tW =  \frac{\Delta t}{|C_i|} \, \sum \limits_{j \in \mathcal{K}_i}  |\Gamma_{ij}| \, 
	\frac{\partial \Flux(\tW_i,\!\tW_j) \!\cdot\! \nun_{ij}}{\partial \tW_i } \cdot \Delta \tW_i, 	 
\end{equation}
with the linearizations given in \eqref{eqn.lin.ducros} and \eqref{eqn.lin.rusanov} for the Ducros flux and the Rusanov flux, respectively. 
In particular, for the semi-implicit Ducros flux we have 
\begin{equation}
	\Fpm^{L}_i \Delta \tW =  \frac{\Delta t}{|C_i|}  \, \sum \limits_{j \in \mathcal{K}_i^-}  |\Gamma_{ij}| \, \halb \left( u_{ij}^{n} - \alpha_{ij}^{d} \right) \Delta \tWa{}{j},  
\end{equation}
\begin{equation}
	\Fpm^{U}_i \Delta \tW =  \frac{\Delta t}{|C_i|} \, \sum \limits_{j \in \mathcal{K}_i^+}  |\Gamma_{ij}| \, \halb 
	\left( u_{ij}^{n} - \alpha_{ij}^{d} \right) \Delta \tWa{}{j},  
\end{equation}
and 
\begin{equation}
	\Fpm^{D}_i \Delta \tW =  \frac{\Delta t}{|C_i|} \, \sum \limits_{j \in \mathcal{K}_i}  |\Gamma_{ij}| \, 
	\halb \left( u_{ij}^{n} + \alpha_{ij}^{d} \right) \Delta \tW_{i}. 	 
\end{equation}

The final Jacobian, including the viscous terms, is then formally given by 
\begin{equation}
	\mathbf{J}(\tW_k) = \mathbf{M} + \mathbf{M} \, \left( \Fpm^{L} + \Fpm^{D} + \Fpm^{U} \right) - \Delta t \mu \left( \mathbf{K}^L + \mathbf{K}^D + \mathbf{K}^U \right), 
\end{equation}
where the stiffness matrix $\mathbf{K}$ has also been split into its diagonal part $\mathbf{K}^{D}$, an upper triangular matrix $\mathbf{K}^U$ and a lower triangular one $\mathbf{K}^L$. Note that the decomposition of the Jacobian itself is never needed in the preconditioner, only the associated matrix-vector products with a generic vector $\mathbf{Q}$. 

The matrix-vector products associated with $\mathbf{K}^L \Delta \tW$ and $\mathbf{K}^U \Delta \tW$ are again easily carried out at the aid of the sets $\mathcal{K}_i^-$ and $\mathcal{K}_i^+$, as for the convective terms. 
Since the global mass matrix is diagonal (this is naturally the case for P1 CR elements in 2D and via mass-lumping in 3D) the diagonal of the Jacobian, which is explicitly needed in the SGS preconditioner, reads 
\begin{gather}
	\mathcal{D} =   \mathbf{M}  + \mathbf{M} \Fpm^{D}  + \Delta t \, \mu  \, \mathbf{K}^{D}.
\end{gather}

The symmetric Gauss-Seidel method shown above cannot be parallelized, hence in the parallel implementation of our scheme we carry out the two sweeps \eqref{eqn.forward.sweep} and \eqref{eqn.backward.sweep} only over the elements contained in each CPU. However, in all our numerical experiments carried out in parallel we have found that this is not a problem, since in our scheme the SGS method is only used as a preconditioner for a GMRES / BiCGStab algorithm, which instead can be properly parallelized by exchanging the norms of the residuals between all CPUs. 

\paragraph{Mesh reordering.} 
Reordering the elements of the mesh can bring a noticeable performance improvement in the preconditioner step and is furthermore also well suited for parallel implementation, hence increasing the overall efficiency of the Newton-Krylov method. The reordering leads to a reduction of the matrix bandwidth, such that most non-zero elements are present near the main diagonal. In practice, if the elements are reordered properly along the main flow direction, the combination of the forward and backward sweep of the SGS preconditioner perform similar to a direct solver based on Gauss elimination, in particular for the Ducros flux. For the simple linear scalar advection equation in 1D based on the implicit upwind scheme, the SGS preconditioner even corresponds to the exact direct solver.    

While for structured collocated grids quite easy reordering techniques are available, for instance the chess board colouring \cite{MenPav17}, performing an accurate and efficient reordering 
of elements in the case of unstructured grids can become quite challenging and in general requires the reordering of graphs, based on the local flow velocity and mesh connectivity, see e.g. \textcolor{black}{\cite{SN97,CWM15,BEPRT20}}. 

Here we propose the use of a much simpler algorithm, which may not provide the best possible reordering, but it still considerably reduces the number of iterations needed to achieve convergence in the linear solver. At the same time it is compatible and easy to implement for parallel codes, since it does not need any extra communication between CPUs.

As already \textcolor{black}{mentioned} before, the approach considered in this paper is very simple and is easy to implement. It uses a purely spatial reordering of the control volumes of the dual mesh based on an \textit{a priori} defined main flow direction, say $\mathbf{v}_0$, that needs to be determined for each test problem. In the simplest case $\mathbf{v}_0$ corresponds to one of the main coordinate directions $\mathbf{e}_i$. We then construct a total number $N_B$ of equidistant \textit{one-dimensional bins} of size $\Delta \lambda$ along the chosen  unitary main flow direction $\mathbf{v}_0$ with $\left\| \mathbf{v}_0 \right\|=1$ and define a straight line starting in a point $\mathbf{x}_0 \in \Omega$ as
\begin{equation}
	\mathbf{x} = \mathbf{x}_0 + \lambda \mathbf{v}_0, \qquad \lambda \in \mathbb{R}. 
\end{equation} 
The uniform grid of bins of length $\Delta \lambda$ along $\mathbf{v}_0$ is defined as $\lambda_j = j \Delta \lambda$ with $j \in \mathbb{Z}$. For the mesh reordering we then run over the dual mesh, i.e. over the control volumes $C_i$, and compute the projection of the barycenter $\mathbf{x}_i$ of cell $C_i$ onto the straight line $\mathbf{x}$ as 
\begin{equation}
	 \xi_i = \left( \mathbf{x}_i - \mathbf{x}_0 \right) \cdot \mathbf{v}_0. 
\end{equation} 
The control volume $C_i$ is then sorted into bin number 
\begin{equation}
	 j = \textnormal{int} \left( \frac{\xi_i}{\Delta \lambda} \right). 
\end{equation}
The equidistant one-dimensional bins of size $\Delta \lambda$ are placed along the centerline from the inlet's network towards the outlets. The control volumes of the dual mesh are then sorted into bin numbers according to the concept of the Voronoi diagram.
Once all dual elements have been sorted into their corresponding bins they are reordered by running over all bins in ascending order, i.e. along the flow direction $\mathbf{v}_0$. Within each bin the element order is simply determined by the order in which the elements have been put into the bin, i.e. we use a first-in-first-out (FIFO) principle.  
In case MPI parallelization is used, each CPU simply performs the same type of reordering, but only for its own cells. The main shortcoming of this approach is that the number of bins to be employed is not automatically computed but must be estimated taking into account the size of the domain, the characteristic dual element size and the mesh distribution between CPUs. Furthermore, the choice of the main flow direction according to which elements are sorted depends on the test case and may not be unique for complex flows. \textcolor{black}{When dealing with complex geometries, representing arterial or venous vessels, the main flow direction $\mathbf{v_0}$ is assumed to be driven by the centerlines of the 3D volume meshes.}

\subsubsection{Second order scheme}

The former approach leads to a first order scheme in space and time. To improve the accuracy in space a MUSCL or ADER methodology can be followed, \cite{TT02,TT05,TT06,Toro,BFTVC17}. Accordingly, the fluxes in \eqref{eqn.flux_residual} and \textcolor{black}{\eqref{eqn.lin.ducros}-\eqref{eqn.lin.rusanov}} are calculated using the boundary extrapolated values at the dual faces. To compute the reconstructed values, we first evaluate the discrete gradients of each momentum variable to be used employing the Crouzeix-Raviart basis functions. Then, they are interpolated from the primal elements to the dual cells as a weighted average. Further details on this approach can be found for instance in \cite{Hybrid2,HybridALE}. Finally the extrapolated momentum at each side of a face shared by elements $C_i$ and $C_j$,  $\W_{ij}^{-}$, $\W_{ij}^{+}$, are computed from the momentum on the cell of the corresponding side, $\W_{i}$, $\W_{j}$,  plus the contribution of its gradients, $\nabla \W_{i}$, $\nabla \W_{j}$ as follows:
\begin{equation*}
	\W_{ij}^{-} =\W_{i} + \nabla \W_{i}\cdot \Delta \x_{ij}^{-},\qquad \W_{ij}^{+} = \W_{j} +\nabla \W_{j}\cdot \Delta \x_{ij}^{+}
\end{equation*}
with $\Delta \x_{ij}^{-}$, $\Delta \x_{ij}^{+}$ the vectors between the barycenters of cells $C_i$ and $C_j$ and the barycenter of the common face $\Gamma_{ij}$.

\subsubsection{Discrete kinetic energy stability}\label{sec.kineticenergystability}
As mentioned in Section \ref{sec:goveq}, for an inviscid fluid under zero gravity effects system \eqref{eqn.NavierStokes} admits an extra energy conservation law in terms of the kinetic energy density. Hence, when deriving a discretization of the Euler equations it could be of interest to check if the obtained scheme is also kinetic energy stable. As demonstrated below, this property is verified for the proposed scheme when the Ducros numerical flux function is used.

\begin{theorem}
	Assuming constant density ($\rho = const.$), vanishing viscosity ($\mu=0$), zero gravity ($\g=0$), 
	a divergence-free velocity field at time $t^n$  
	\begin{equation}
		  \sum \limits_{j\in\mathcal{K}_{i}} | \Gamma_{ij} | u_{ij}^n = 0, \qquad 
		  u_{ij}^n = \halb \left( \mathbf{u}_i^n + \mathbf{u}_j^n \right) \cdot \nun_{ij} \label{eqn.divfree}
	\end{equation}
	and vanishing boundary fluxes ($u_{ij}^n = 0 \, \, \forall \Gamma_{ij} \in \partial \Omega$) 
	the finite volume scheme for the discretization of the nonlinear convective terms based on the 
	semi-implicit Ducros flux function 
	\begin{equation}
		\mathbf{u}_i^* = \mathbf{u}_i^n  - 
		\frac{\Delta t}{|C_i|} \sum \limits_{j\in\mathcal{K}_{i}}  | \Gamma_{ij} |  
		\left( \halb u_{ij}^n  \left( \mathbf{u}_i^* + \mathbf{u}_j^* \right) - 
		\halb {\alpha}_{ij}^n \left( \mathbf{u}_j^* - \mathbf{u}_i^* \right) \right), \label{eqn.momdiscFV} 
	\end{equation}
	with ${\alpha}_{ij}^n = | u_{ij}^n | + c_{\alpha} \geq 0$ is kinetic energy stable in the sense 
	\begin{equation}
		\int \limits_{\Omega} \halb \left( \mathbf{u}^* \right) ^2 d\mathbf{x} \leq 
		\int \limits_{\Omega} \halb \left( \mathbf{u}^n \right) ^2 d\mathbf{x}. 		  
	\end{equation}
\end{theorem}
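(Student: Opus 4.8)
The plan is to obtain an \emph{a priori} estimate for the discrete kinetic energy by testing the scheme against the discrete velocity itself. Concretely, I would multiply the update rule \eqref{eqn.momdiscFV} by $|C_i|\,\mathbf{u}_i^*$, sum over all dual cells $C_i$, and use the elementary vector identity $\mathbf{a}\cdot(\mathbf{a}-\mathbf{b}) = \halb\mathbf{a}^2 - \halb\mathbf{b}^2 + \halb(\mathbf{a}-\mathbf{b})^2$ with $\mathbf{a}=\mathbf{u}_i^*$, $\mathbf{b}=\mathbf{u}_i^n$. This turns the left-hand side into $\sum_i |C_i|\bigl(\halb(\mathbf{u}_i^*)^2 - \halb(\mathbf{u}_i^n)^2\bigr) + \sum_i |C_i|\,\halb(\mathbf{u}_i^*-\mathbf{u}_i^n)^2$, i.e. the kinetic energy difference plus a manifestly nonnegative contribution stemming from the implicit time discretization. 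It then suffices to show that the right-hand side — the tested convective and dissipative numerical fluxes — is nonpositive.

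Next I would treat the two flux contributions separately. For the central (Ducros) part $-\Delta t\sum_i \mathbf{u}_i^*\cdot\sum_{j\in\mathcal{K}_i}|\Gamma_{ij}|\,\halb u_{ij}^n(\mathbf{u}_i^*+\mathbf{u}_j^*)$, I would split $\mathbf{u}_i^*\cdot(\mathbf{u}_i^*+\mathbf{u}_j^*)$ into the diagonal term $(\mathbf{u}_i^*)^2$ and the cross term $\mathbf{u}_i^*\cdot\mathbf{u}_j^*$. The diagonal term yields $-\Delta t\sum_i \halb(\mathbf{u}_i^*)^2\sum_{j\in\mathcal{K}_i}|\Gamma_{ij}|u_{ij}^n$, which vanishes by the discrete divergence-free assumption \eqref{eqn.divfree}. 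The cross term vanishes by a discrete summation-by-parts argument: in the double sum over cells, the $(i,j)$ and $(j,i)$ contributions are $|\Gamma_{ij}|\,\halb u_{ij}^n\,\mathbf{u}_i^*\cdot\mathbf{u}_j^*$ and $|\Gamma_{ji}|\,\halb u_{ji}^n\,\mathbf{u}_j^*\cdot\mathbf{u}_i^*$, which are exactly opposite because $|\Gamma_{ij}|=|\Gamma_{ji}|$ and $u_{ij}^n=-u_{ji}^n$ (since $\nun_{ij}=-\nun_{ji}$), and hence cancel pairwise. Thus the whole convective contribution is zero — the Ducros flux is discretely skew-symmetric on a divergence-free velocity field.

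For the dissipative part $\Delta t\sum_i \mathbf{u}_i^*\cdot\sum_{j\in\mathcal{K}_i}|\Gamma_{ij}|\,\halb\alpha_{ij}^n(\mathbf{u}_j^*-\mathbf{u}_i^*)$, I would again combine the $(i,j)$ and $(j,i)$ edge contributions; using $|\Gamma_{ij}|=|\Gamma_{ji}|$ and the symmetry $\alpha_{ij}^n=\alpha_{ji}^n$ (which follows from $\alpha_{ij}^n=|u_{ij}^n|+c_\alpha$), each pair collapses to $-\Delta t\,|\Gamma_{ij}|\,\halb\alpha_{ij}^n(\mathbf{u}_j^*-\mathbf{u}_i^*)^2$, which is nonpositive because $\alpha_{ij}^n\ge0$. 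Summing over all interior faces therefore gives a nonpositive numerical dissipation. Collecting the three pieces, the identity reads $\sum_i |C_i|\halb(\mathbf{u}_i^*)^2 - \sum_i|C_i|\halb(\mathbf{u}_i^n)^2 + \sum_i|C_i|\halb(\mathbf{u}_i^*-\mathbf{u}_i^n)^2 = -\Delta t\sum_{\text{faces}}|\Gamma_{ij}|\halb\alpha_{ij}^n(\mathbf{u}_j^*-\mathbf{u}_i^*)^2 \le 0$, and discarding the two nonnegative terms on the left yields the claim, after recognizing that $\sum_i|C_i|\halb(\mathbf{u}_i^*)^2 = \int_\Omega \halb(\mathbf{u}^*)^2\,d\mathbf{x}$ for the piecewise-constant finite volume solution.

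The main obstacle I anticipate is bookkeeping rather than anything deep: one must carefully convert the cell-wise double sums into face-wise sums and back, keeping track of orientations, and one must dispose of the boundary faces. The convective boundary terms vanish immediately since $u_{ij}^n=0$ on $\partial\Omega$ by assumption, but the dissipative boundary contribution must also be handled — either by noting that the wall boundary condition effectively enforces $\mathbf{u}_j^*=\mathbf{u}_i^*$ (no artificial jump across $\Gamma_{ij}\in\partial\Omega$), or by taking $c_\alpha=0$ on boundary faces so that $\alpha_{ij}^n=|u_{ij}^n|=0$ there. A secondary, minor point is that the estimate is conditional on the existence of a solution $\mathbf{u}^*$ of the implicit nonlinear system; existence itself is not addressed by this argument.
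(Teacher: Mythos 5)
Your proposal is correct and follows essentially the same route as the paper: test the scheme against $\mathbf{u}_i^*$, use the discrete divergence-free condition \eqref{eqn.divfree} to eliminate the diagonal part of the central Ducros flux, cancel the cross terms by antisymmetry over interior faces ($\nun_{ij}=-\nun_{ji}$), exploit $\alpha_{ij}^n\geq 0$ for the numerical dissipation, and absorb the time-difference term via the square-completion identity. The only (cosmetic) difference is that you sum over cells before manipulating the dissipative term, pairing $(i,j)$ with $(j,i)$ to produce $-\halb\alpha_{ij}^n(\mathbf{u}_j^*-\mathbf{u}_i^*)^2$ directly, whereas the paper first derives a cell-local inequality by adding and subtracting terms and then sums; your explicit remark on the boundary contribution of the dissipative term is a welcome extra precision.
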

\begin{proof}
Taking the dot product of $\tvel_{i}$ by \eqref{eqn.momdiscFV} yields
\begin{equation}
	\tvel_i \cdot \tvel_i = \tvel_i \cdot \vel_i^n  - 
	\frac{\Delta t}{|C_i|} \sum \limits_{j\in\mathcal{K}_{i}}  | \Gamma_{ij} |  
	\left( \halb u_{ij}^n  \tvel_i \cdot \left( \tvel_i + \tvel_j \right) - 
	\halb {\alpha}_{ij}^n \tvel_i \cdot \left( \tvel_j - \tvel_i \right) \right).\label{eqn.kienesta1}
\end{equation}
On the other hand, taking into account the divergence-free property of the velocity, \eqref{eqn.divfree},  we have
\begin{gather}
	 \sum \limits_{j\in\mathcal{K}_{i}}  | \Gamma_{ij} |  
	 \halb u_{ij}^n  \tvel_i \cdot \left( \tvel_i + \tvel_j \right)  
	=\sum \limits_{j\in\mathcal{K}_{i}}  | \Gamma_{ij} |  
	\halb u_{ij}^n  \left( \tvel_i\right)^2
	+ \sum \limits_{j\in\mathcal{K}_{i}}  | \Gamma_{ij} |  
	 \halb u_{ij}^n  \tvel_i \cdot \tvel_j \notag\\
	 =\halb \left( \tvel_i\right)^2\sum \limits_{j\in\mathcal{K}_{i}}  | \Gamma_{ij} |  
	  u_{ij}^n  
	 + \sum \limits_{j\in\mathcal{K}_{i}}  | \Gamma_{ij} |  
	 \halb u_{ij}^n  \tvel_i \cdot \tvel_j 
	 =\sum \limits_{j\in\mathcal{K}_{i}}  | \Gamma_{ij} |  
	 \halb u_{ij}^n  \tvel_i \cdot \tvel_j,
\end{gather}
where the second equality comes from the fact that $\tvel_{i}$ does not depend on $j$.
Applying the former relation to \eqref{eqn.kienesta1} and adding and subtracting $$\frac{\Delta t}{|C_i|} \sum\limits_{j\in\mathcal{K}_{i}}  | \Gamma_{ij} | \halb\tvel_j \cdot\halb \alpha_{ij}^n\left( \tvel_{j}-\tvel_{i} \right)$$ and $\halb\vel_{i}^{n}\cdot \vel_{i}^{n}$ at the right hand side, lead to

\begin{gather}
	\halb\tvel_i \cdot \tvel_i + \halb\tvel_i \cdot \tvel_i = \halb \tvel_i \cdot \vel_i^n + \halb \tvel_i \cdot \vel_i^n  - \halb\vel_{i}^{n}\cdot \vel_{i}^{n} + \halb\vel_{i}^{n}\cdot \vel_{i}^{n}
	\notag \\
	-\frac{\Delta t}{|C_i|} \sum \limits_{j\in\mathcal{K}_{i}}  | \Gamma_{ij} |  
	  \left(  u_{ij}^n \halb \tvel_i \cdot \tvel_j
	- \halb {\alpha}_{ij}^n \halb\tvel_i \cdot \left( \tvel_j - \tvel_i \right) 
	- \halb {\alpha}_{ij}^n \halb\tvel_j \cdot \left( \tvel_j - \tvel_i \right)  \right. \notag\\\left.
	- \halb {\alpha}_{ij}^n \halb\tvel_i \cdot \left( \tvel_j - \tvel_i \right)
	+ \halb {\alpha}_{ij}^n \halb\tvel_j \cdot \left( \tvel_j - \tvel_i \right)
	\right).
\end{gather}
Reordering terms, we get

\begin{gather}
	\halb\tvel_i \cdot \tvel_i = \halb\left( \tvel_i -\vel_{i}^{n} \right) \cdot \left( \vel_i^n - \tvel_i \right)     + \halb\vel_{i}^{n}\cdot \vel_{i}^{n}
	\notag\\
	- \frac{\Delta t}{|C_i|} \sum \limits_{j\in\mathcal{K}_{i}}  | \Gamma_{ij} |  
	\left(  u_{ij}^n \halb \tvel_i \cdot \tvel_j - \halb {\alpha}_{ij}^n   \left(\halb \left( \tvel_j\right) ^{2} - \halb\left( \tvel_i\right) ^2 \right) 
	+ \halb {\alpha}_{ij}^n \halb \left( \tvel_j - \tvel_i \right)^2
	\right).
\end{gather}
Since ${\alpha}_{ij}^n \geq 0$, then

\begin{gather}
	\halb\left( \tvel_i\right)^2 = \halb\left( \vel_{i}^{n}\right)^2 
	- \frac{\Delta t}{|C_i|} \sum \limits_{j\in\mathcal{K}_{i}}  | \Gamma_{ij} |  
	\left(  u_{ij}^n \halb \tvel_i \cdot \tvel_j - \halb {\alpha}_{ij}^n   \left(\halb \left( \tvel_j\right) ^{2} - \halb\left( \tvel_i\right) ^2 \right) \right) \notag\\
	- \halb\left( \tvel_i -\vel_{i}^{n} \right)^2
	- \frac{\Delta t}{|C_i|} \sum \limits_{j\in\mathcal{K}_{i}}  | \Gamma_{ij} |  
	\left(  \halb {\alpha}_{ij}^n \halb \left( \tvel_j - \tvel_i \right)^2 	\right) \notag\\
	\leq \halb\left( \vel_{i}^{n}\right)^2 
	- \frac{\Delta t}{|C_i|} \sum \limits_{j\in\mathcal{K}_{i}}  | \Gamma_{ij} |  
	\left(  u_{ij}^n \halb \tvel_i \cdot \tvel_j - \halb {\alpha}_{ij}^n   \left(\halb \left( \tvel_j\right) ^{2} - \halb\left( \tvel_i\right) ^2 \right) \right)
\end{gather}
which corresponds to a discrete kinetic energy inequality in each cell $C_i$.  
Finally, integrating over the domain results
\begin{gather}
	\int\limits_{\Omega}\halb\left( \tvel\right)^2 \dV \leq \int\limits_{\Omega}\halb\left( \vel^{n}\right)^2 \dV
	-  \Delta t\sum\limits_{C_i} \sum \limits_{j\in\mathcal{K}_{i}}  | \Gamma_{ij} |  
	\left(  u_{ij}^n \halb \tvel_i \cdot \tvel_j - \halb {\alpha}_{ij}^n   \left(\halb \left( \tvel_j\right)^{2} - \halb\left( \tvel_i\right)^2 \right) \right).\label{eqn.kienesta7}
\end{gather}
Since $\nun_{ij}=-\nun_{ji}$ when summing the fluxes over all cells they cancel,
$$| \Gamma_{ij} |u_{ij}^n \halb \tvel_i \cdot \tvel_j=-| \Gamma_{ji} | u_{ji}^n \halb \tvel_j \cdot \tvel_i,$$
apart from the ones on the boundary of the domain.  Thus, assuming vanishing boundary fluxes and taking into account that the last term in \eqref{eqn.kienesta7} is the dissipative kinetic energy flux, we obtain
\begin{gather}
	\int\limits_{\Omega}\halb\left( \tvel\right)^2 \dV \leq \int\limits_{\Omega}\halb\left( \vel^{n}\right)^2 \dV,
\end{gather}
hence the scheme is kinetic energy stable.
\end{proof}

\subsection{Projection stage}
A standard continuous finite element method is adopted to solve the pressure Poisson equation  \eqref{eqn.pressure.poisson}. Let $\psi\in V_0$ be a test function, $V_0=\{\psi\in\mathcal{H}^1:\int_\Omega \psi dV=0\}$. Multiplication of \eqref{eqn.pressure.poisson} by a test  function $\psi$ and integration over the domain $\Omega$ yields 
\begin{equation}
    \int\limits_\Omega \nabla^2 (\Press^{n+1}-\Press^n) \psi\, \dV=\frac{1}{\Delta t}\int\limits_\Omega \nabla \cdot \tW \psi \, \dV.
    \label{eq:weak1}
\end{equation}
Then, applying Green's formula to both sides we obtain the following weak problem:
\textit{Find $\delta\Press=\Press^{n+1}-\Press^n\in V_0$ satisfying
\begin{equation}
    \int\limits_\Omega \grae\delta\Press\cdot\grae \psi\, \dV=\frac{1}{\Delta t}\int\limits_\Omega \tW\cdot \grae \psi\, \dV-\frac{1}{\Delta t}\int\limits_{\partial\Omega}\psi\,  \W^{n+1}\cdot\nun \dS
    \label{eq:weak}
\end{equation}
for all $z\in V_0$}.

\noindent The weak problem \eqref{eq:weak} can be seen as a Poisson problem for the unknown $\delta\Press$ so it can be discretized using $\mathbb{P}^1$ basis functions, which leads to a symmetric positive definite system once appropriate boundary conditions have been applied. Then, a matrix-free conjugate gradient method is employed to solve the resulting final system.

\subsection{Post-projection stage}
Once the pressure correction $\delta\Press$ is obtained at the vertex of the primal grid, we can recover the pressure at the new time step as $\Press^{n+1}=\Press^{n}+\delta\Press$. Moreover, the corrected value for the linear momentum is computed on the dual mesh by updating $\tW$ with $\gra \delta\Press$ according to \eqref{eqn.momentum_correction}.
The needed pressure gradient $\gra \delta\Press$ is first approximated at each primal element by using the gradients of the classical basis function for $\mathbb{P}^1$ finite elements. Then, it is interpolated on the dual grid by considering the weighted average of the contributions of the two primal subtetrahedra used to build each dual cell. Further details on this update as well as the modified method for the solution of the alternative pressure system \eqref{eq:timesys2_npc}-\eqref{eq:timesys3_npc} can be found in \cite{HybridALE}.

\subsection{Boundary conditions}
In this section we present the key points to treat the different types of boundary conditions to be employed in the test cases analyzed in the next section: periodic boundary conditions, velocity inlet, pressure inlet, pressure outlet, Dirichlet boundary conditions and viscous and inviscid walls. 

If a periodic boundary condition is considered, the neighbouring triangular / tetrahedral dual elements through the periodic boundaries, which should in fact be quadrilaterals / polyhedrons of six faces, are identified. Within the transport-diffusion stage the contributions due to convective and diffusive terms are computed at each half of this theoretically merged element 
and then a correction is performed to incorporate the contributions of the other periodic half. This results on a duplication of the boundary elements must be taken into account when computing the residuals. Therefore each of them contributes only with half its weight. On the other hand, the periodic vertices on the primal grid are merged and the resulting mesh is employed directly to get the pressure in the projection stage. Let us note that when periodic boundary conditions are imposed everywhere the pressure problem is singular, since the pressure could freely change by an additive constant. To avoid it, we simply fix one of the vertices to a predefined value that may be taken from a mean pressure, the exact solution, if available, or the initial data.

For the remaining boundary conditions, we must take into account that the algorithm employed in the transport-diffusion stage requires the computation of a residual, matrix-vector products and a preconditioner so boundary conditions must be consistently imposed throughout the Newton-Krylov algorithm as well as for the pressure system:

\begin{itemize}
	\item Velocity inlet boundary conditions. The momentum at the boundary, $\rho\vel_{BC}$, is weakly imposed by computing the convective fluxes within the residual considering a right state of the form:
	\begin{equation}
		\rho\vel_j = 2\rho\vel_{BC} - \rho\vel_i
	\end{equation}
	with $\rho\vel_i$ the linear momentum computed at the boundary cell. 
	Regarding the computation of the linearized fluxes inside the matrix-vector product and the flux contribution in the preconditioner, we define the right state to be the opposite of the inner one. The given velocity is also employed to compute the boundary integral of the pressure system, i.e. the last term in \eqref{eq:weak}.
	
	\item Dirichlet boundary conditions for the velocity field or viscous wall/no slip boundary conditions. The velocity is imposed strongly in the corresponding boundary. To this end, the value obtained once the convective and diffusive terms contributions have been computed is overwritten with the exact momentum. This correction is also performed after the post-projection stage. Like for velocity inlet boundaries, the pressure is computed taking into account the exact velocity in the boundary integral.
	
	\item Inviscid wall boundary. In this case the velocity is weakly defined inside the momentum solver by considering the state:
	\begin{equation}
		\rho\vel_j = \rho\vel_i- 2 \left( \rho\vel_{i}\cdot\nun\right)  \nun.
	\end{equation}
	For the matrix-vector products and the preconditioner the approach followed is the same than for the velocity inlet.
    Meanwhile, inside the pressure system the boundary integral contribution is set to zero.
    
    \item Pressure outlet. The velocity is left free in the momentum equations. To this end the convective fluxes are computed using the inner state. Then the pressure is imposed at the corresponding nodes in the pressure solver.
    
    \item Pressure inlet boundary conditions. They are a combination of a velocity inlet for the solution of the momentum equations and a pressure outlet boundary condition for the pressure system.
\end{itemize}

\section{Numerical results}\label{sec:numericalresults}
The proposed methodology is validated at the aid of several benchmarks from fluid mechanics including the Taylor-Green vortex test case and the ABC problem, to analyse the accuracy, as well as the first problem of Stokes and the lid driven cavity, employed to study the behaviour of the method for viscous flows. Having in mind possible applications of the scheme to blood low simulations in vessels, the flow over a backward-facing step and the flow over a cylinder for the inviscid and a set of viscous flows are also analysed. Moreover, we study the classical Hagen-Poiseuille and Womersley problems and the flow through an ideal artery with stenosis. Finally a real 3D coronary tree geometry is considered.

Unless stated otherwise, the density is set to $\rho=1$ and the convective-diffusive subsystem is solved by employing the SGS preconditioned Newton-BiCGStab algorithm with the second order in space Ducros flux function and $c_{\alpha}=0$.

\subsection{Convergence study in 2D} \label{sec:numericalresults_TGV}
As first test case, we consider the classical steady state Taylor-Green vortex benchmark defined in $\Omega = [0, 2\pi]\times[0, 2\pi]$, for which the known exact solution reads:
\begin{gather*}
    p(x_1,x_2,t)= \frac{1}{4}(\cos(2x_1)+\cos(2x_2)), \quad \rho(x_1,x_2,t) =1,\\
    u_1(x_1,x_2,t)=\sin(x_1)\cos(x_2),\quad 
    u_2(x_1,x_2,t)=-\cos(x_1)\sin(x_2).
\end{gather*}

To perform the analysis of the error and order of accuracy, we employ the five meshes with decreasing cell sizes presented in Table~\ref{tab:TGV1}.
We test the SGS-preconditioned Newton-GMRES method and the preconditioned Newton-BiCGStab method employing a Ducros semi-implicit scheme of both first and second order. Moreover, simulations are run also considering the Newton-BiCGStab-SGS method with the first and second order Rusanov scheme as numerical flux.   
The obtained $L_2$ error norms at the final time, $t=1.0$, and the corresponding convergence rates, computed for any variable $V$ as
\begin{equation}
    E(V)_{M_i}=||V-V_{M_i}||_{L_2(\Omega)},\qquad 
    o_{V_{M_i/M_j}}=\frac{\log(E(V)_{M_i}/E(V)_{M_j})}{\log(h_{M_i}/h_{M_j})},
    \label{eq:errors}
\end{equation}
are reported in Table~\ref{tab:TGV2}. 
We observe that the sought order of accuracy is obtained for all test cases considered.

\begin{table}[H]
    \centering    
    \begin{tabular}{lccc}
    \hline
    Mesh & Primal elements & Vertices & Dual elements \\
    \hline
    $M_1$ & 512 & 289 & 800 \\
    $M_2$ & 2048 & 1089 & 3136 \\
    $M_3$ & 8192 & 4225 & 12416 \\
    $M_4$ & 32768 & 16129 & 49408 \\
    $M_5$ & 131072 & 66049 & 197120 \\
    \hline
    \end{tabular}
    \caption{Taylor-Green vortex. Mesh features.}
    \label{tab:TGV1}
\end{table}

\begin{table}
    \centering
    \begin{tabular}{cccccccccc}
    \hline
       & $E_{M_1}$ & $E_{M_2}$ & $E_{M_3}$ & $E_{M_4}$ & $E_{M_5}$ & $o_{M_1/M_2}$ & $o_{M_2/M_3}$ & $o_{M_3/M_4}$& $o_{M_4/M_5}$\\
    \hline 
    \multicolumn{10}{c}{GMRES, Ducros, 1$^{st}$ order }
    \\\hline
     $p$ & 0.639 & 0.327 & 0.151 & 7.1e-2 & 3.5e-2 & 0.96 & 1.12 & 1.09 & 1.03\\ 
     \rule[-0.3cm]{0mm}{0cm}
     $\mathbf{W}$ & 0.798 & 0.45 & 0.246 & 0.131 & 6.8e-2 & 0.83 & 0.87 & 0.91 & 0.95\\ 
    \hline
    \multicolumn{10}{c}{GMRES, Ducros, 2$^{nd}$ order }
    \\\hline
     $p$ & 0.173 & 5.1e-2 & 1.3e-2 & 3.3e-3 & 8.2e-4 & 1.77 & 1.96 & 1.99 & 2.0\\ 
     \rule[-0.3cm]{0mm}{0cm}
     $\mathbf{W}$ & 4.7e-2 & 9.3e-3 & 2.1e-3 & 5.2e-4 & 1.2e-4 & 2.32 & 2.13 & 2.03 & 2.17\\ 
    \hline
    \multicolumn{10}{c}{BiCGStab, Ducros, 1$^{st}$ order }
    \\\hline
     $p$ & 0.639 & 0.327 & 0.151 & 7.1e-2 & 3.5e-2 & 0.96 & 1.12 & 1.09 & 1.03\\ 
     \rule[-0.3cm]{0mm}{0cm}
     $\mathbf{W}$ & 0.798 & 0.45 & 0.246 & 0.131 & 6.8e-2 & 0.83 & 0.87 & 0.91 & 0.95\\ 
    \hline
    \multicolumn{10}{c}{BiCGStab, Ducros, 2$^{nd}$ order }
    \\\hline
     $p$ & 0.173 & 5.1e-2 & 1.3e-2 & 3.3e-3 & 8.2e-4 & 1.77 & 1.96 & 1.99 & 2.0\\ 
     \rule[-0.3cm]{0mm}{0cm}
     $\mathbf{W}$ & 4.7e-2 & 9.3e-3 & 2.1e-3 & 5.2e-4 & 1.2e-4 & 2.32 & 2.13 & 2.03 & 2.17\\ 
    \hline
    \multicolumn{10}{c}{BiCGStab, Rusanov, 1$^{st}$ order }
    \\\hline
     $p$ & 1.121 & 0.754 & 0.394 & 0.195 & 9.6e-2 & 0.57 & 0.93 & 1.01 & 1.02\\ 
     \rule[-0.3cm]{0mm}{0cm}
     $\mathbf{W}$ & 1.124 & 0.646 & 0.356 & 0.19 & 9.9e-2 & 0.8 & 0.86 & 0.9 & 0.94\\ 
    \hline
    \multicolumn{10}{c}{BiCGStab, Rusanov, 2$^{nd}$ order }
    \\\hline
    $p$ & 0.145 & 4.7e-2 & 1.3e-2 & 3.2e-3 & 8.2e-4 & 1.63 & 1.89 & 1.96 & 1.98\\ 
     $\mathbf{W}$ & 4.5e-2 & 9.3e-3 & 2.2e-3 & 5.2e-4 & 1.2e-4 & 2.28 & 2.11 & 2.04 & 2.16\\ 
    \hline
    \end{tabular}
    \caption{Taylor-Green vortex. Observed $L_2$ errors in space and time, $E_{M_i}$, and convergence rates, $o_{M_i/M_{i+1}}$.}
    \label{tab:TGV2}
\end{table}
To analyze also the performance of the different algorithms all simulations of this test case have been carried out in serial on one single CPU core of an AMD Ryzen Threadripper 3990X workstation with 64 cores and 128 GB of RAM.  
Table~\ref{tab:TGV3} reports the total computational time, the computational time per dual element and time step, namely
\begin{equation}
	t_e=\frac{\text{CPU time}}{\text{N. dual elements}\cdot \text{N. time steps}},
\end{equation}
and the number of time steps employed. 
As expected, using the Ducros flux function, and thus avoiding Newton iterations, is less time consuming than employing the implicit Rusanov flux, which requires the Newton algorithm for the linearization of the convective terms. This difference can be observed better for the first order scheme due to the lower accuracy of the method which forces the Krylov an Newton methods to perform more iterations to attain the stop criteria tolerance than when using the more accurate second order scheme. This fact also justifies the smaller CPU time of the second order in space approach with respect to the first order scheme for fine grids. For instance, for mesh $M_5$, when running the Ducros flux function in the BiCGStab algorithm we need $2$ iterations to reach the tolerance in each time step while for the first order scheme we perform $4$ iterations of the Krylov algorithm per time step. \textcolor{black}{Besides, the smaller CPU time required for the BiCGStab-Rusanov algorithm for the $M5$ grid with respect to $M4$ is due to the smaller error committed by the Krylov solver for $M5$, which avoids the necessity of Newton iterations. Meanwhile, for $M4$ or coarser grids, at least two iterations of the Newton loop are needed to reach the prescribed tolerance.} 
\begin{table}
    \centering
    \hspace*{-0.5cm}\begin{tabular}{cccccc}
    \hline
     & $M_1$ & $M_2$ & $M_3$ & $M_4$ & $M_5$ \\
    \hline
    \multicolumn{6}{c}{GMRES, Ducros, 1$^{st}$ order }
    \\\hline
    CPU time [s] & 0.2085 & 1.8085 & 18.6728 & 234.1114 & 2818.3383\\ 
    $t_e$ [ms] & 0.013 & 0.0144 & 0.0188 & 0.0296 & 0.0447\\ 
    Time steps & 20 & 40 & 80 & 160 & 320\\ 
    \hline
    \multicolumn{6}{c}{GMRES, Ducros, 2$^{nd}$ order }
    \\\hline
    CPU time [s] & 0.2267 & 1.7674 & 17.142 & 196.3942 & 212.272\\ 
    $t_e$ [ms] & 0.0142 & 0.0141 & 0.0173 & 0.0248 & 0.0034\\ 
    Time steps & 20 & 40 & 80 & 160 & 320\\ 
    \hline
    \multicolumn{6}{c}{BiCGStab, Ducros, 1$^{st}$ order }
    \\\hline
    CPU time [s] & 3.6388 & 1.7925 & 18.7976 & 221.8836 & 2763.4684\\ 
    $t_e$ [ms] & 0.2274 & 0.0143 & 0.0189 & 0.0281 & 0.0438\\ 
    Time steps & 20 & 40 & 80 & 160 & 320\\ 
    \hline
    \multicolumn{6}{c}{BiCGStab, Ducros, 2$^{nd}$ order }
    \\\hline
    CPU time [s] & 4.6886 & 1.7353 & 17.5796 & 183.9679 & 205.1983\\ 
    $t_e$ [ms] & 0.293 & 0.0138 & 0.0177 & 0.0233 & 0.0033\\ 
    Time steps & 20 & 40 & 80 & 160 & 320\\ 
    \hline
    \multicolumn{6}{c}{BiCGStab, Rusanov, 1$^{st}$ order }
    \\\hline
    CPU time [s] & 0.9933 & 7.3803 & 48.1604 & 466.2547 & 4647.945\\ 
    $t_e$ [ms] & 0.0621 & 0.0588 & 0.0485 & 0.059 & 0.0737\\ 
    Time steps & 20 & 40 & 80 & 160 & 320\\ 
     \hline
    \multicolumn{6}{c}{BiCGStab, Rusanov, 2$^{nd}$ order }
    \\\hline
    CPU time [s] & 45.2971 & 6.0887 & 26.9274 & 260.3956 & 220.1567\\ 
    $t_e$ [ms] & 2.8311 & 0.0485 & 0.0271 & 0.0329 & 0.0035\\ 
    Time steps & 20 & 40 & 80 & 160 & 320\\ 
    \hline
    \end{tabular}
    \caption{Taylor-Green vortex. CPU time, CPU time per element, $t_e$, and number of time steps.}
    \label{tab:TGV3}
\end{table}

\subsection{3D Arnold-Beltrami-Childress flow}
To assess the accuracy also for the three dimensional case, we consider the Arnold-Beltrami-Childress (ABC) flow originally introduced in \cite{Arnold1965,Childress} and also studied in \cite{TD16}. The exact solution for the inviscid incompressible Navier-Stokes equations in a periodic domain reads
\begin{gather*}
	u_1(x_1,x_2,x_3,t)=(\sin(x_3)+\cos(x_2)),\\
	u_2(x_1,x_2,x_3,t)=(\sin(x_1)+\cos(x_3)),\\
	u_3(x_1,x_2,x_3,t)=(\sin(x_2)+\cos(x_1)), 
\end{gather*}
\begin{equation}
	p(x_1,x_2,x_3,t)=-(\cos(x_1)\sin(x_2)+\sin(x_1)\cos(x_3)+\sin(x_3)\cos(x_2))+c, 
	\label{eq:ABCtest}
\end{equation}
with $c\in \mathbb{R}$. The convergence study is performed using the implicit hybrid method with a preconditioned Newton-BiCGStab scheme and a Ducros semi-implicit scheme of both first and second order. An artificial viscosity coefficient of $c_{\alpha}=1.0$ is set for the second order in space scheme. The computational domain is given by $\Omega=[-\pi,\pi]^3$ and a sequence of four successively refined meshes is used. The main features of those meshes are reported in Table \ref{tab:ABC1}. 
As initial condition we impose the exact solution and the dynamics evolves up to $t=1$ with a time step set to $\Delta t=0.05$ for the coarser grid and then properly scaled according to the mesh refinement. Since we impose periodic boundary conditions on all sides, we have a set of solutions for the pressure differing by a constant $c$. In order to test also the convergence rate of the pressure, we set the constant $c$ in \eqref{eq:ABCtest} a posteriori equal to the mean value of the resulting numerical pressure, as done in \cite{TD16}.
The obtained pressure and the velocity streamlines for mesh $M_3$ are depicted in Figure~\ref{fig:ABC} for a qualitative comparison with available reference data. 
Meanwhile the resulting $L_2$ error norms and convergence rates computed according to \eqref{eq:errors} are reported in Table~\ref{tab:ABC2}. We observe that the first order scheme is slightly below the expected accuracy especially for the pressure variable, while the second order scheme achieves the sought order of accuracy. 
\begin{table}[H]
	\centering
	\begin{tabular}{lcc}
		\hline
		Mesh & Primal elements & Vertices  \\
		\hline
		$M_1$ & 8640 & 2197 \\
		$M_2$ & 69120 & 15625 \\
		$M_3$ & 552960 & 117649 \\
		$M_4$ & 4423680 & 912673 \\
		\hline
	\end{tabular}
	\caption{ABC Test. Mesh features.}
	\label{tab:ABC1}
\end{table}
\begin{table}
	\centering
	\begin{tabular}{cccccccc}
		\hline\rule[-0.3cm]{0mm}{0cm}
		& $E_{M_1}$ & $E_{M_2}$ & $E_{M_3}$ & $E_{M_4}$ & $o_{M_1/M_2}$ & $o_{M_2/M_3}$ & $o_{M_3/M_4}$\\
		\hline
		\multicolumn{8}{c}{BiCGStab, Ducros, 1$^{st}$ order }
		\\\hline
		$p$ & 2.9 & 1.739 & 0.939 & 0.522 & 0.74 & 0.89 & 0.85\\ 
		\rule[-0.3cm]{0mm}{0cm}
		$\mathbf{W}$ & 4.346 & 2.265 & 1.176 & 0.603 & 0.94 & 0.95 & 0.96\\ 
		\hline
		\multicolumn{8}{c}{BiCGStab, Ducros, 2$^{nd}$ order }
		\\\hline
		$p$ & 0.79 & 0.227 & 5.1e-2 & 1.2e-2 & 1.8 & 2.15 & 2.05\\ 
		$\mathbf{W}$ & 0.807 & 0.198 & 4.6e-2 & 7.3e-3 & 2.02 & 2.11 & 2.67\\ 
		\hline
	\end{tabular}
	\caption{ABC Test. Observed $L_2$ errors in space and time, $E_{M_i}$, and convergence rates, $o_{M_i/M_{i+1}}$.}
	\label{tab:ABC2}
\end{table}

In addition, Table~\ref{tab:ABC3} reports the computational time, the computational time per dual element and the number of time steps needed to reach the final time. As in the convergence study for the Taylor-Green vortex in 2D, Section~\ref{sec:numericalresults_TGV}, for fine grids we observe a smaller CPU time of the second order in space approach with respect to the first order scheme, justified by the fact that the use of a higher accuracy method requires less iterations to attain the tolerance of the stop criteria. Again, for mesh $M_4$, when running the second order scheme in the BiCGStab algorithm we need 2 iterations to reach the tolerance in each time step while for the first order scheme we perform 4 iterations of the Krylov algorithm per time step.

\begin{table}
	\centering
	\begin{tabular}{ccccc}
		\hline 
		& $M_1$ & $M_2$ & $M_3$ & $M_4$ \\
		\hline
		\multicolumn{5}{c}{\small{BiCGStab, Ducros, 1$^{st}$ order} }
		\\\hline
		 CPU time [s] & 3.187 & 15.0431 & 199.1015 & 2855.7914\\ 
		$t_e$ [ms] & 0.0615 & 0.1451 & 0.1309 & 0.123\\ 
		\rule[-0.3cm]{0mm}{0cm}
		Time steps & 20 & 40 & 80 & 160\\ 
		\hline
		\multicolumn{5}{c}{\small{BiCGStab, Ducros, 2$^{nd}$ order} }
		\\\hline
		 CPU time [s] & 8.3805 & 38.4215 & 233.7558 & 681.022\\ 
		$t_e$ [ms] & 0.1617 & 0.3706 & 0.1537 & 0.0293\\ 
		Time steps & 20 & 40 & 80 & 160\\ 
		\hline
	\end{tabular}
	\caption{ABC Test. CPU time, CPU time per element, $t_e$, and number of time steps.}
	\label{tab:ABC3}
\end{table}
\begin{figure}
	\centering
	\includegraphics[trim =10 10 10 10,clip,width=0.45\linewidth]{./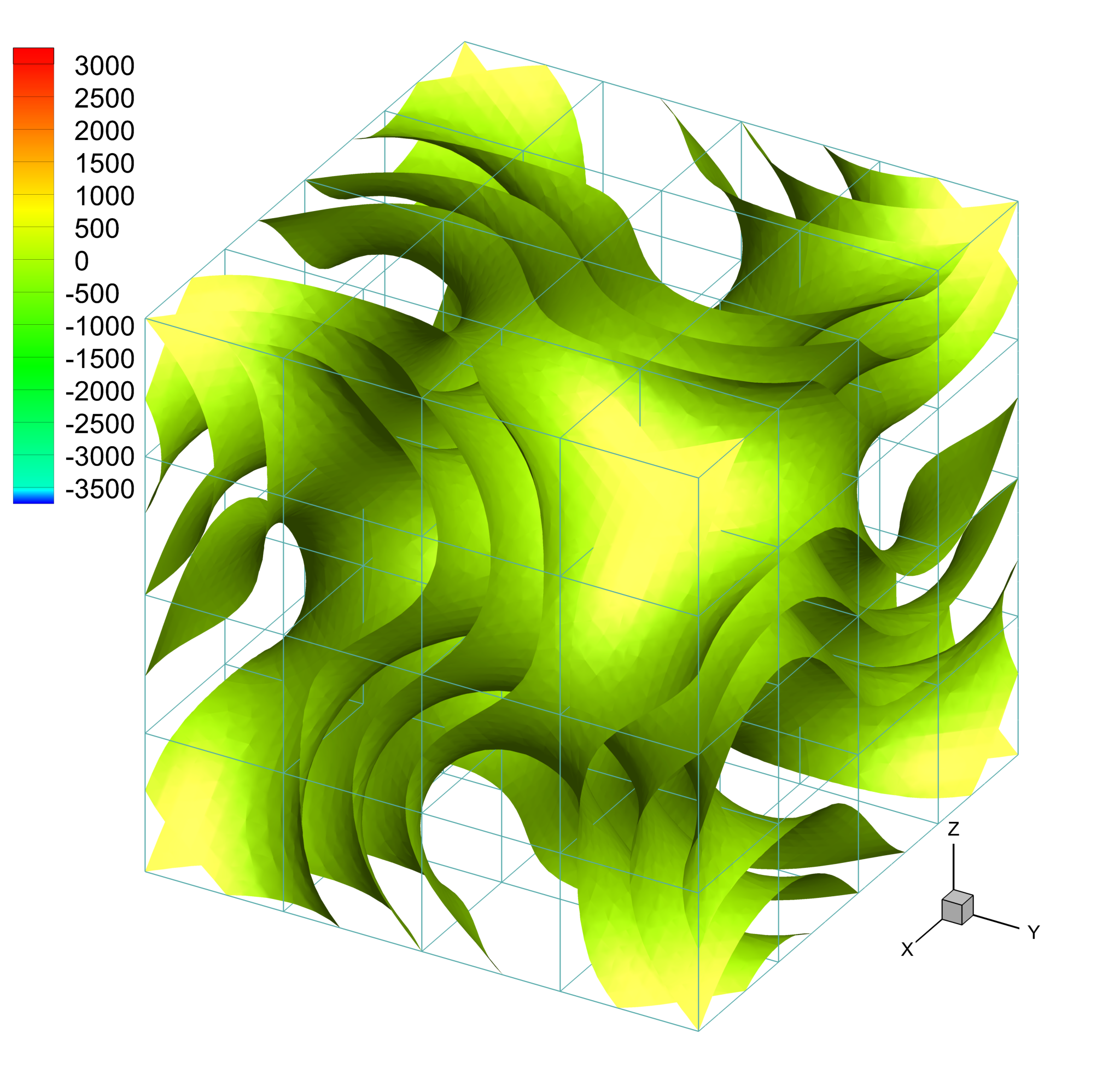}\quad\includegraphics[trim =10 10 10 10,clip,width=0.45\linewidth]{./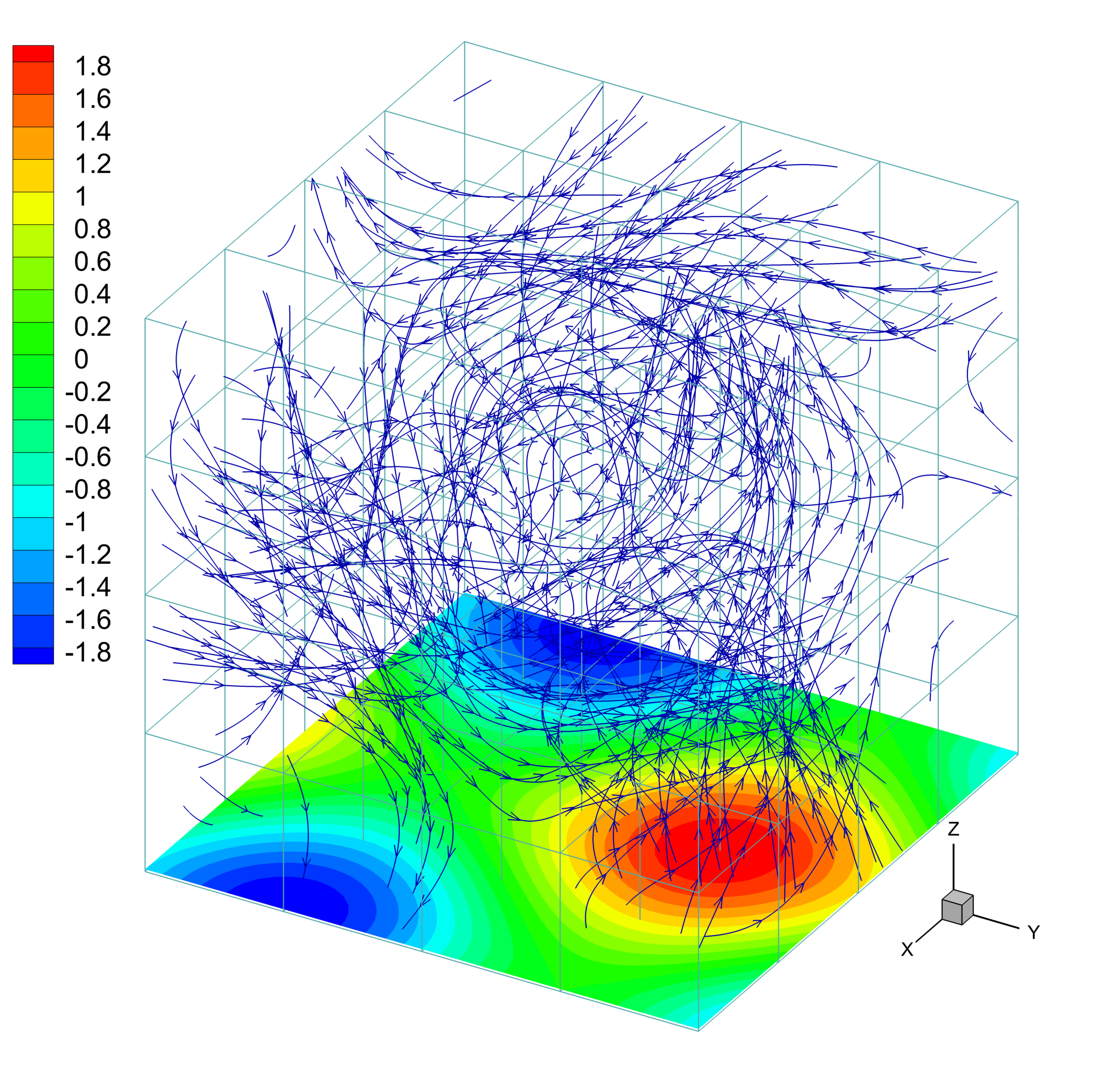}
	\caption{ABC Test. Pressure isosurfaces at levels $p=[-0.8,0.0,0.8]$ (left) and velocity streamlines (right).}
	\label{fig:ABC}
\end{figure}

\subsection{The first problem of Stokes}
We now simulate the first problem of Stokes, \cite{SG16}, which is one of the rare unsteady problems for which an exact solution of the incompressible Navier-Stokes equations is known.
We consider the computational domain $\Omega=[-0.5, 0.5]\times[-0.05,0.05]$, a mesh composed of $128000$ primal elements and the following initial condition:
\begin{equation*} 
	p(x_1,x_2,0)=0, \quad u_1(x_1,x_2,0)= 0, \quad u_2(x_1,x_2,0)=
	\begin{cases}
		\phantom{+}0.1, &\text{if }x_1>0,\\
		-0.1, &\text{if }x_1<0.
	\end{cases}
\end{equation*}
To reproduce the classical 1D test case, periodic boundary conditions are set along \mbox{$x_2$-direction,} whereas the initial condition is imposed on the left and right boundaries of the domain.
The fluid density is $\rho=1$, while a sequence of viscosity values are considered $\mu \in \{10^{-2},10^{-3},10^{-4}\}$. The simulations are run using the BiCGStab-Newton algorithm with a second order semi-implicit Ducros numerical flux function. The fixed time step is $\Delta t=0.01$ and we set a final time of $t=1.0$.
The numerical results for the velocity component $u_2$ are validated against the exact solution of the Navier-Stokes equations which is given by

\begin{equation*}
	u_2(x_1,x_2,t)=\frac{1}{10}\mathrm{erf}\left(\frac{x_1}{2\sqrt{\mu t}}\right).
\end{equation*}
Figure \ref{fig:FS2D} presents the comparison between the reference solution and the numerically obtained one-dimensional cut along the $x_1$-direction at $y=0$. An excellent agreement can be observed for all viscosities considered.
\begin{figure}
	\centering
	\includegraphics[trim =10 10 10 10,clip,width=0.31\linewidth]{./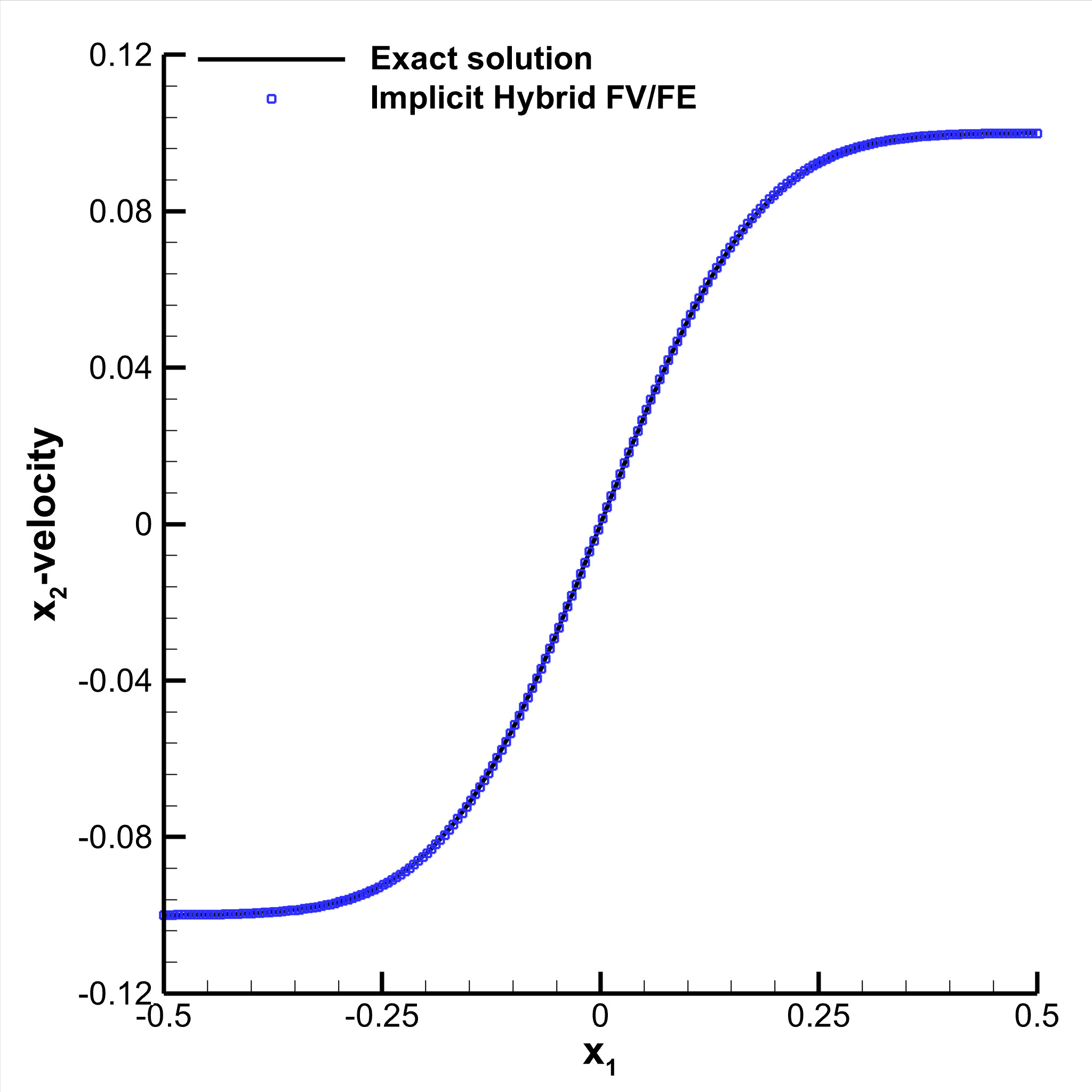}\quad
	\includegraphics[trim =10 10 10 10,clip,width=0.31\linewidth]{./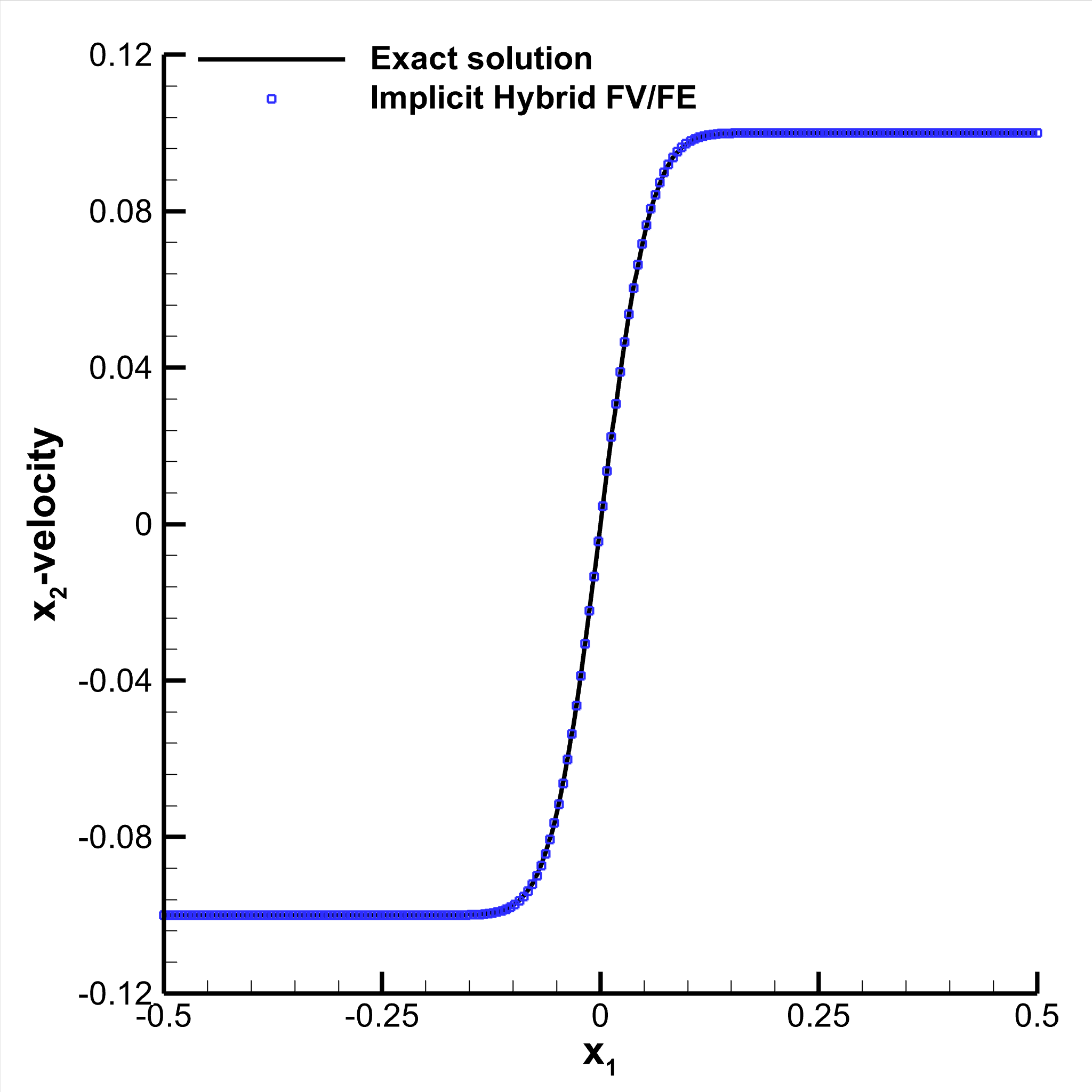}\quad
	\includegraphics[trim =10 10 10 10,clip,width=0.31\linewidth]{./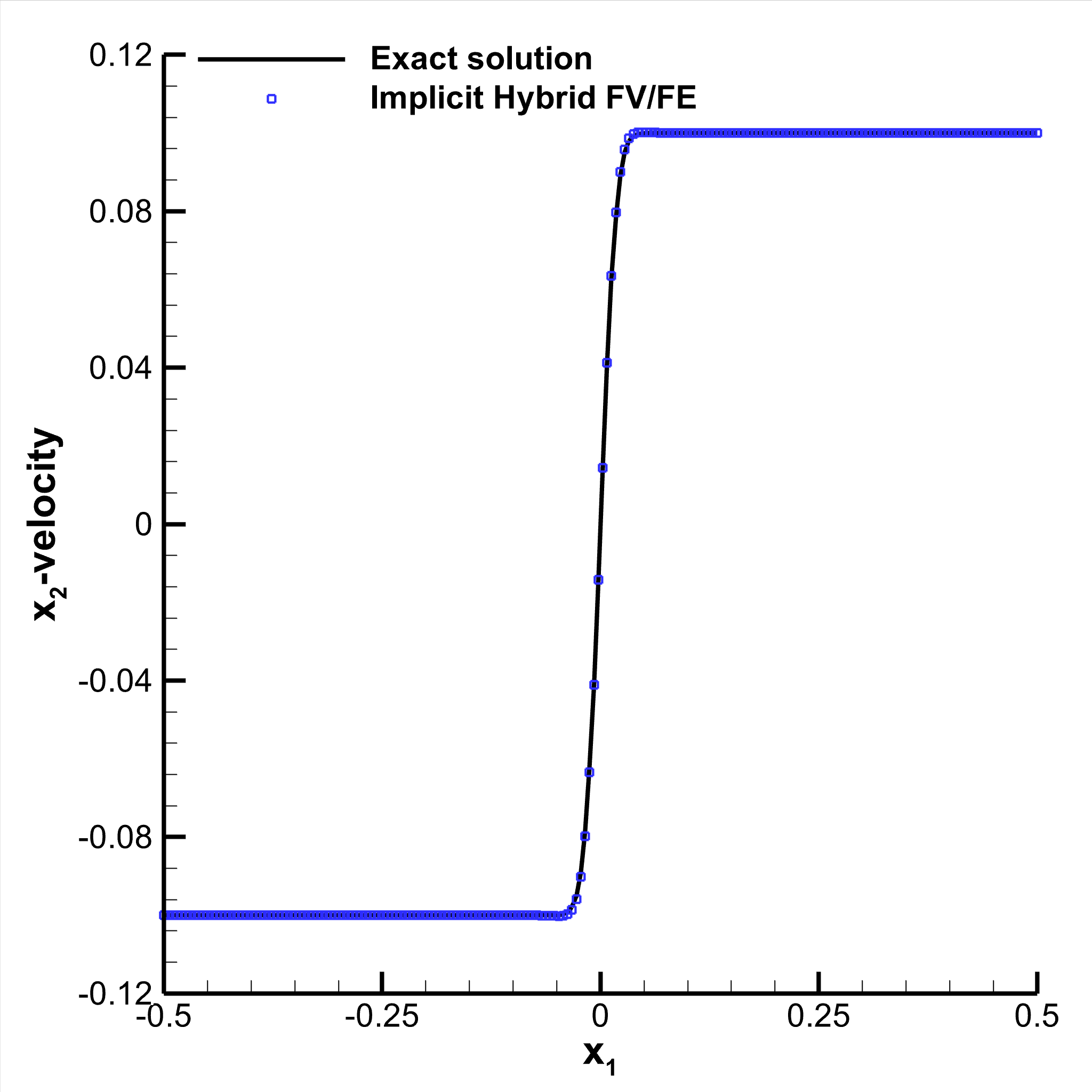}
	\caption{First problem of Stokes. Comparison between the exact solution and the numerical results computed with the implicit hybrid FV/FE scheme on the one-dimensional cut along the $x_1$-direction at $y=0$ at the final time $t_{end}=1.0$. The viscosity values considered are: $\mu=10^{-2}$ (left), $\mu=10^{-3}$ (middle), $\mu=10^{-4}$ (right).}
	\label{fig:FS2D}
\end{figure}

\subsection{Lid-driven cavity}
The lid-driven cavity problem has has widely been used as a validation test for incompressible flow solvers, \cite{GGS82,AK05,TD17,HybridMPI}. We discretize the square domain $\Omega=[0,1]\times[0,1]$ with a triangular grid made of $2906$ primal elements. Dirichlet boundary conditions are imposed on all sides: no slip conditions are applied on lateral boundaries and at the bottom, while a fixed velocity field $\mathbf{u}=(1,0)$ is imposed at top boundary. 
Moreover, the fluid density is set to $\rho=1$ and the fluid viscosity is fixed to $\mu=10^{-2}$, resulting in a Reynolds number of $100$. To run the simulation, we consider an initial fluid at a rest with pressure $p=1$ and we let the time step vary at each time iteration according to the condition $CFL=100$:
\begin{equation}
	\Dt = \min_{C_{i}}\left\lbrace \Dt_{i}\right\rbrace, \qquad \Dt_{i} = \textnormal{CFL} \frac{r_{i}^2}{( \left|\zeta\right|_{\max} + c_\alpha) r_{i} 
	}
	\label{eqn.cfl.conditionNS}
\end{equation}
with $\left|\zeta\right|_{\max}$ the maximum absolute eigenvalue related to the convective terms and $r_i$ the incircle diameter of each dual control volume.
The preconditioner inside the Newton-BiCGStab algorithm makes use of a reordering of the elements in $x_1$-direction. 
The obtained velocity profiles along the vertical and horizontal lines passing through the geometric center of the cavity  are compared against the reference solution from Ghia et al. \cite{GGS82} in Figure~\ref{fig:LDC}. The contour plot of the velocity components together with the velocity vectors are also depicted. 
\begin{figure}
    \centering
    \includegraphics[trim =10 10 10 10,clip,width=0.45\linewidth]{./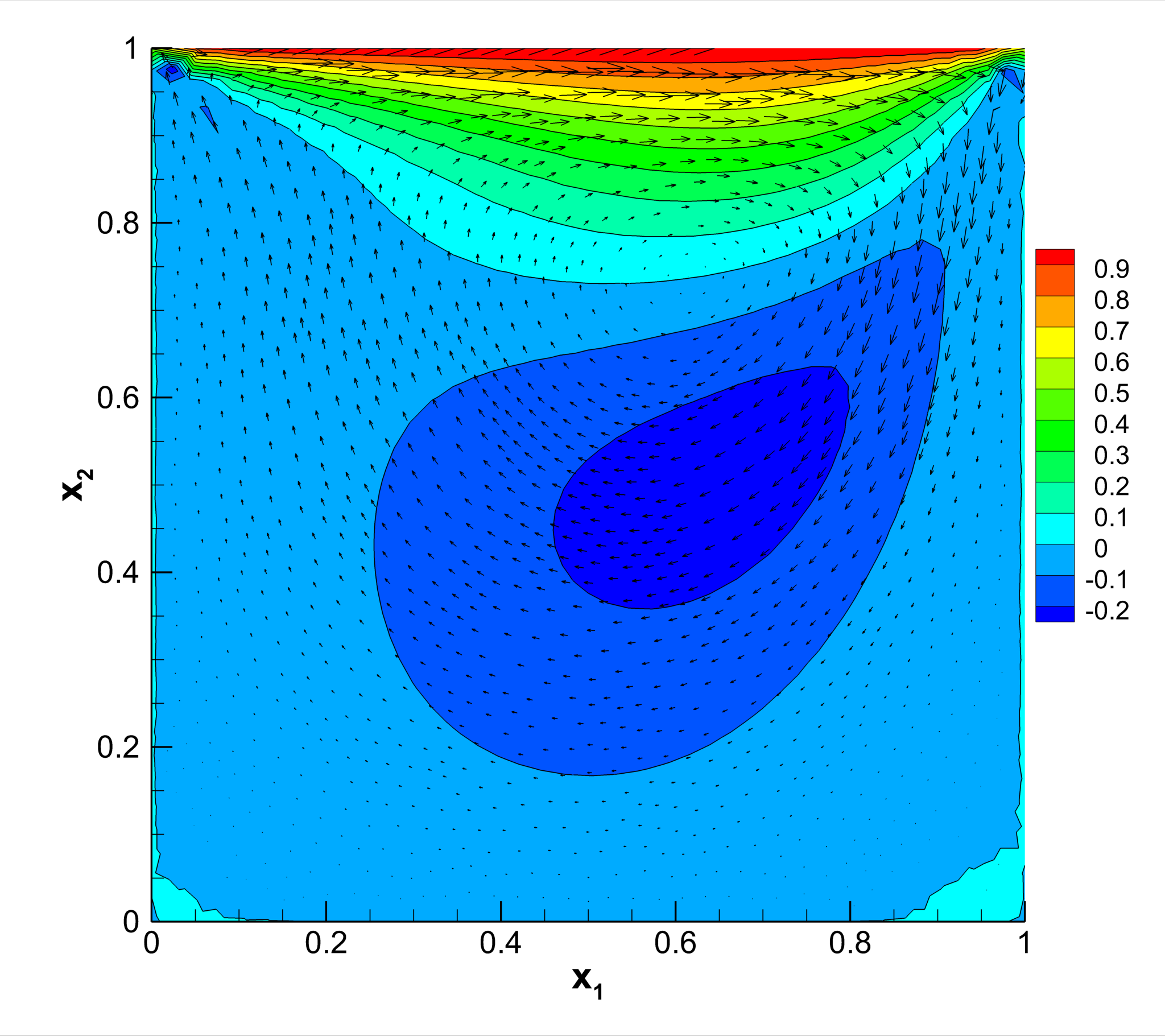}\quad\includegraphics[trim =10 10 10 10,clip,width=0.45\linewidth]{./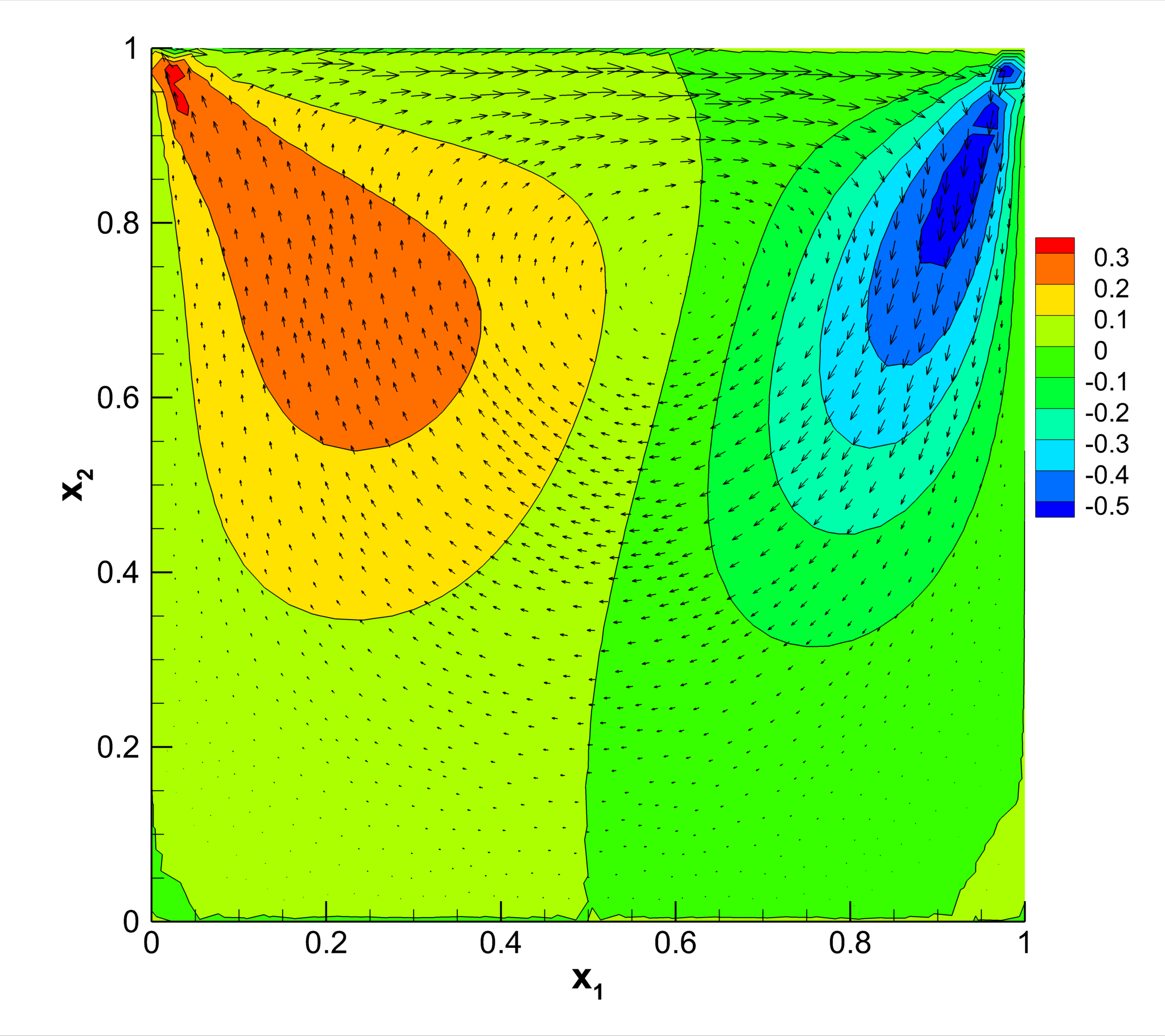}\\
    \includegraphics[trim =10 10 10 10,clip,width=0.7\linewidth]{./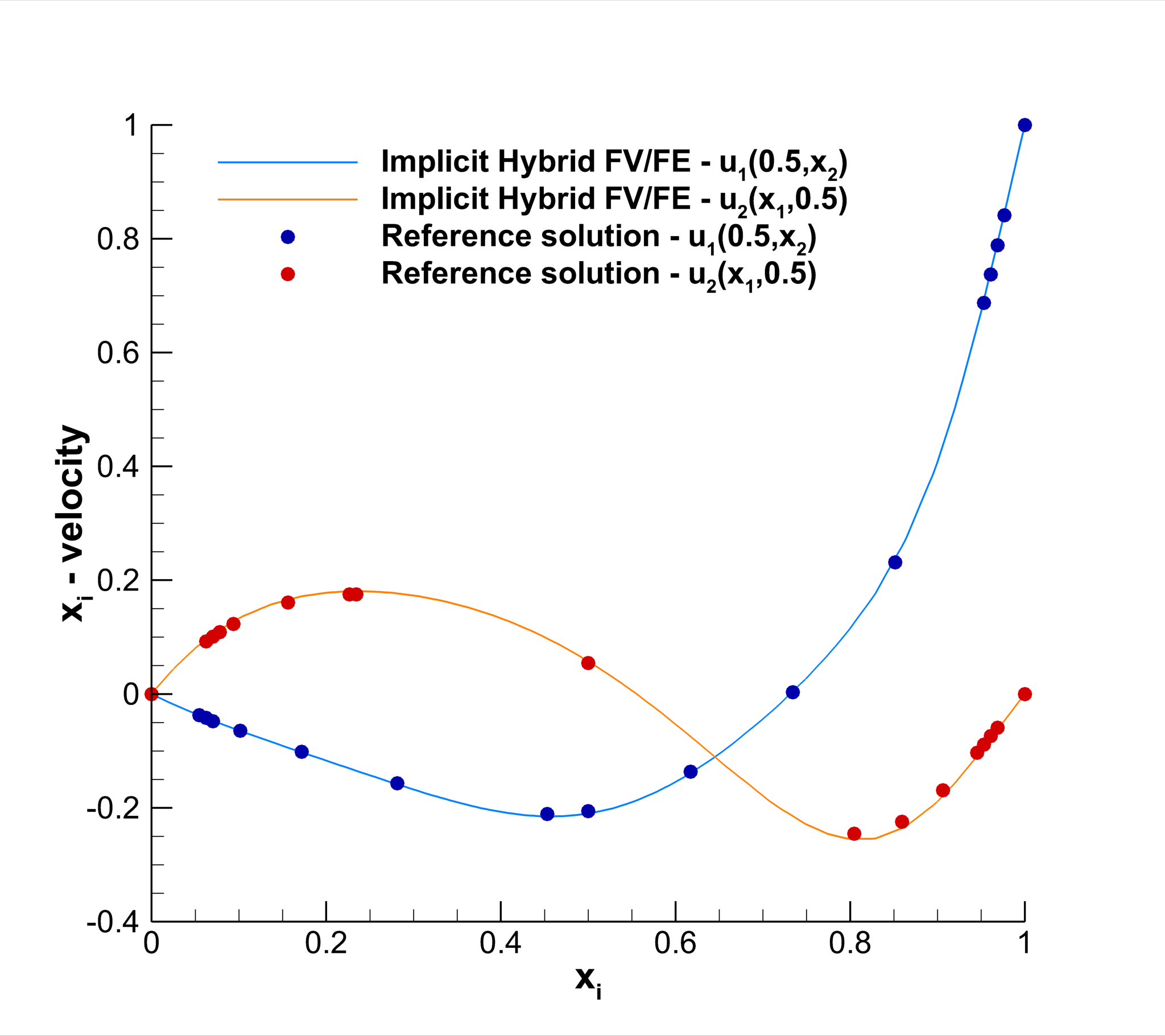}
    \caption{Lid driven cavity. Top: contour plot of the $x$-component of velocity vector, $u_1$, (left) and of $y$-component, $u_2$, (right) obtained using the implicit hybrid FV/FE scheme. Bottom: 1D cuts for $u_1$ and $u_2$ along the vertical and horizontal lines passing through the geometric center of the cavity . The reference solution for comparison is taken from \cite{GGS82}. }
    \label{fig:LDC}
\end{figure}

We now analyse the computational cost of the implicit scheme with respect to the semi-implicit hybrid FV/FE methodology proposed in \cite{HybridMPI,HybridNNT}. To solve the convective part of the semi-implicit scheme, the second order in space explicit approach with Rusanov flux function is used, while the counterpart of the fully implicit scheme employs the preconditioned Newton-BiCGStab method also with a second order in space Rusanov flux function. The computational time for both tests, run in parallel on $32$ CPUs of an AMD Ryzen Threadripper 3990X are reported in Table~\ref{tab:LDC1}. We can observe that for this test case the use of the new fully implicit scheme proposed in this paper reduces the computational cost of the simulation by a factor of around $110$ when using the Rusanov flux function and of about $145$ for the Ducros flux.
\begin{table}
    \centering
    \begin{tabular}{lcc}
    \hline
     Numerical Flux & Semi-implicit FV/FE scheme & New implicit FV/FE scheme \\
    \hline
    Rusanov & $4809.89$ s & $41.85$ s \\
    Ducros & $4829.65$ s & $32.89$ s\\
    \hline
    \end{tabular}
    \caption{Lid driven cavity. CPU times required by the novel fully implicit hybrid FV/FE scheme and the semi-implicit hybrid FV/FE method in \cite{HybridNNT}.}
    \label{tab:LDC1}
\end{table}

\subsection{Backward-facing step flow}
We study the fluid flow over a backward-facing step at different Reynolds numbers. The structure of the domain follows that reported in \cite{erturk2008}. 
We choose a step height of $h=0.097$. Then, the inlet boundary is located $20$ step heights upstream the step, while the outlet boundary is chosen $300$ step heights away from the step. The larger channel, downstream of the step, has a height of $H=0.2$ i.e. we consider a expansion ratio, the ratio of the channel height $H$ downstream of the step to the channel height $h_i$ upstream of the step, equal to $1.942$. 
At the inlet we impose the exact Poiseuille velocity profile, whereas a Neumann boundary condition is considered at the outlet. On all other boundaries, no-slip wall boundary conditions are imposed. For the current test the fluid density is set to $\rho=1$ and the kinematic viscosity $\nu$ is chosen in order to obtain the desired Reynolds number, \cite{armaly1983}, which is given by Re$=\frac{D U}{\nu}$, with $D=2h_i$ and $U$ the mean inlet velocity. The time step is set to $\Delta t= 0.01$ and the simulation is run with the implicit hybrid FV/FE scheme until steady state is reached. Figure~\ref{fig:BSF2} depicts the $x_1$-component of the resulting velocity field and the streamlines at Re$=400$. In addition, in Figure~\ref{fig:BSF2_recp} we plot the resulting recirculation point $X_1$, normalized by the step height $h$, for different Reynolds numbers. Herein, we compare our results against the experimental data reported in \cite{armaly1983}, the numerical results obtained in \cite{TD14} using a staggered semi-implicit discontinuous Galerkin scheme with polynomial approximation degree of order $N=3$, and the explicit second-order upwind finite difference scheme presented in \cite{erturk2008}.
\begin{figure}
	\centering
	\includegraphics[trim =10 10 100 100,clip,width=0.9\linewidth]{./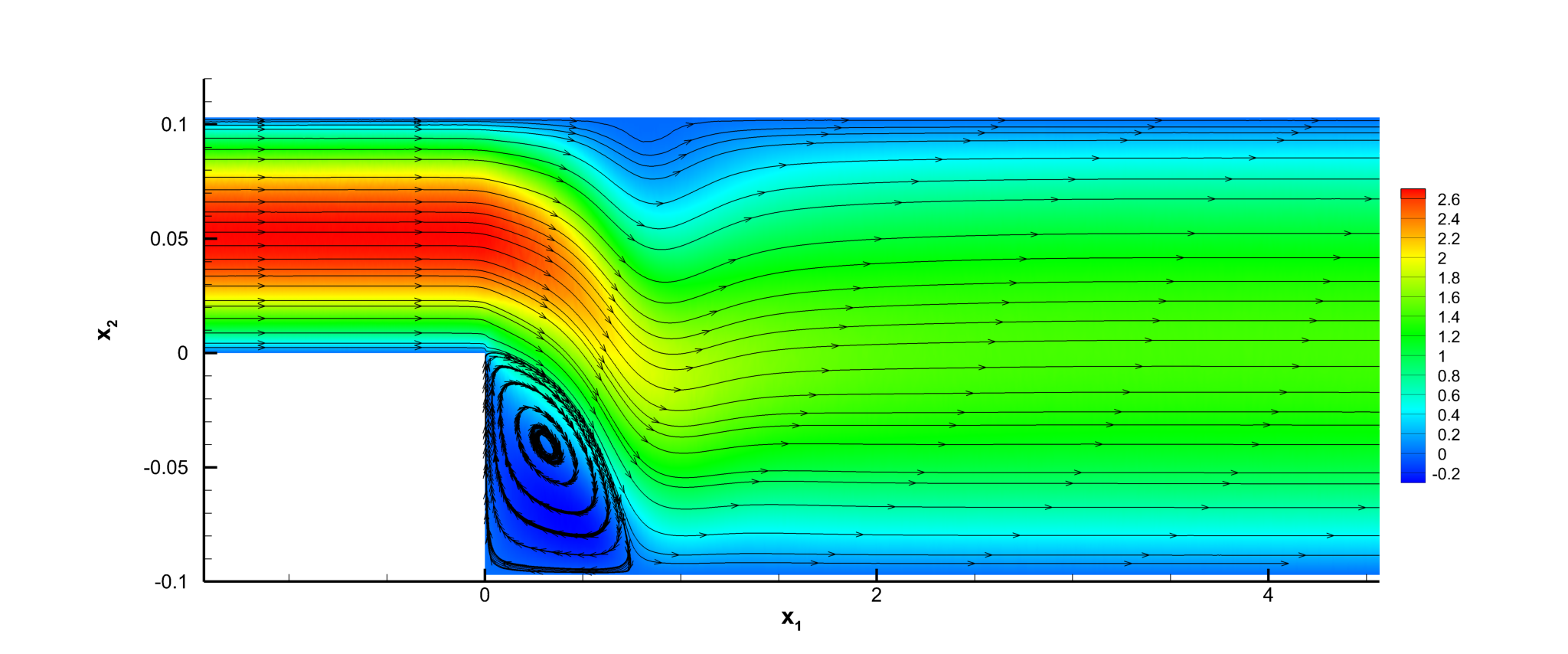}
	\caption{Backward-facing step flow. Contour plot of the velocity field obtained with the implicit hybrid FV/FE method at Reynolds $400$ and resulting streamlines near the recirculation zone downstream the step.}
	\label{fig:BSF2}
\end{figure}
\begin{figure}
	\centering
	\includegraphics[trim =10 10 10 10,clip,width=0.7\linewidth]{./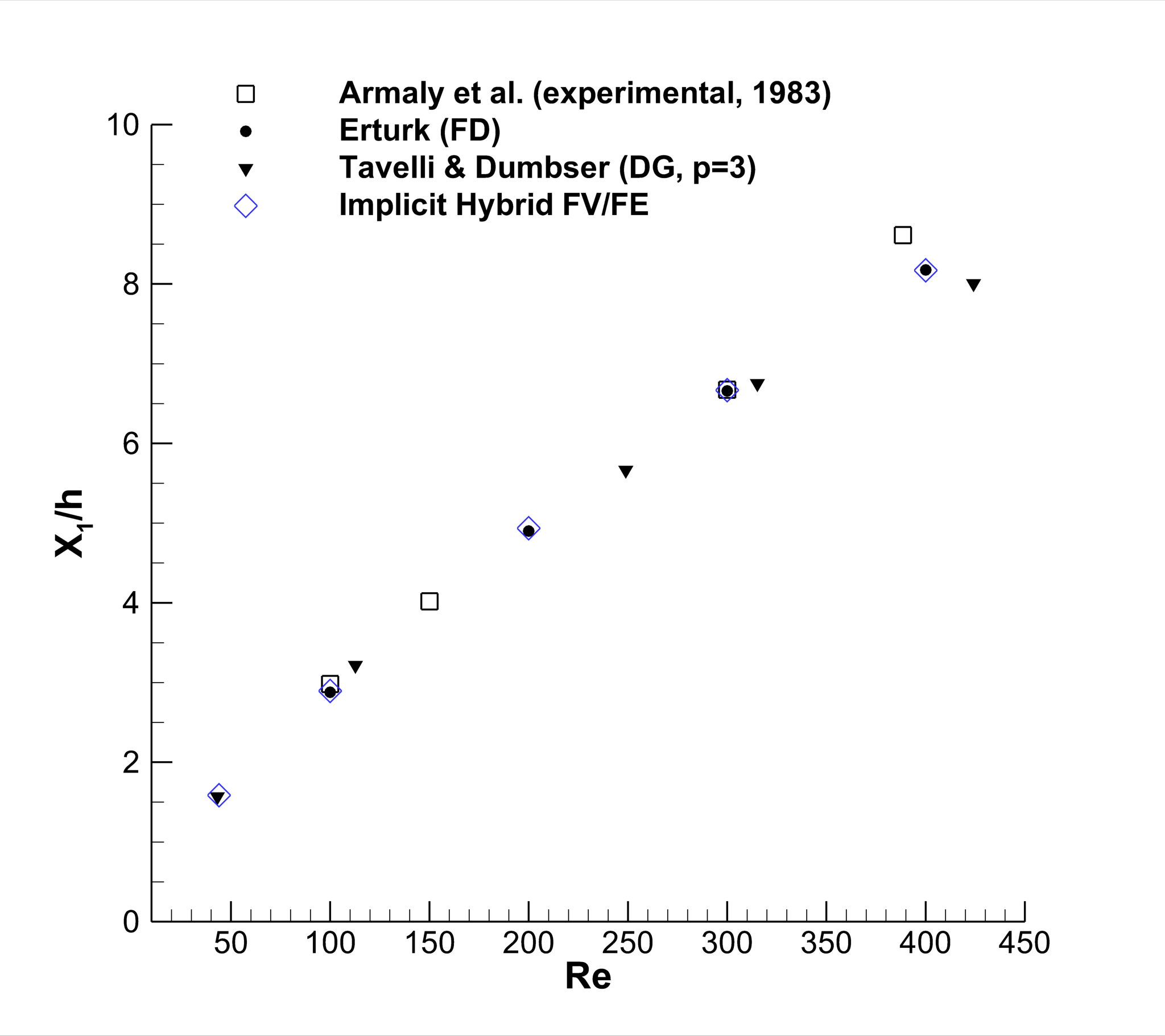}
	\caption{Backward-facing step flow. Normalized recirculation point $X_1/h$ for different Re numbers compared with the experimental data reported in \cite{armaly1983}, the numerical results obtained in \cite{erturk2008} and those presented in \cite{TD14}.}
	\label{fig:BSF2_recp}
\end{figure}

\subsection{Potential flow around a cylinder}
The sixth test case analysed here consists in the inviscid flow around a circular cylinder, \cite{TD14}. We consider the computational domain $\Omega=[-8,8]^{2}\setminus \left\lbrace \x\in\mathbb{R}^2 \, \mid \, \left\| \x \right\| < 1 \right\rbrace$ discretized with $30384$ primal triangular elements. The velocity field is strongly imposed at the left inflow boundary, making use of the known analytical potential flow solution:
\begin{equation*} 
	u_{r}     =  u_{m} \left(1-\frac{r_c^2}{r^2}\right)\cos(\phi), \qquad
	u_{\theta} = -u_{m} \left(1+\frac{r_c^2}{r^2}\right)\sin(\phi), \qquad 
	\tan(\phi)  = \dfrac{x_2}{x_1},
\end{equation*}
where $u_{r}$, $u_{\theta}$ denote the radial and angular velocities, respectively, $u_m=1$ is the mean flow velocity in the horizontal direction, $r_c=1$ is the radius of the cylinder and $r$ is the distance to the center of the cylinder of each spacial point $\x$. 
In the left boundary a pressure outlet boundary condition is defined using

\begin{equation*}
	\press(r,\phi,t) = 1 +  u_m^{2} \frac{r_c^2}{r^2} \left( \cos(2 \phi)-\halb\frac{r_c^2}{r^2}\right).
\end{equation*}
Finally at the top and bottom boundaries weak Dirichlet boundary conditions for the velocity field are employed.
As initial condition we impose the constant velocity field $\vel =(u_m,0) =(1,0)$ and $\press =1$. The results obtained with the second order implicit hybrid scheme with $c_{\alpha} = 0.5$ are reported in Figure \ref{fig:invcylpressure2d}. We can observe a that the obtained solution fits pretty well the known exact solution along the circumference of radius $r=1.01$ centred at the origin.
\begin{figure}
	\centering
	\includegraphics[width=0.48\linewidth]{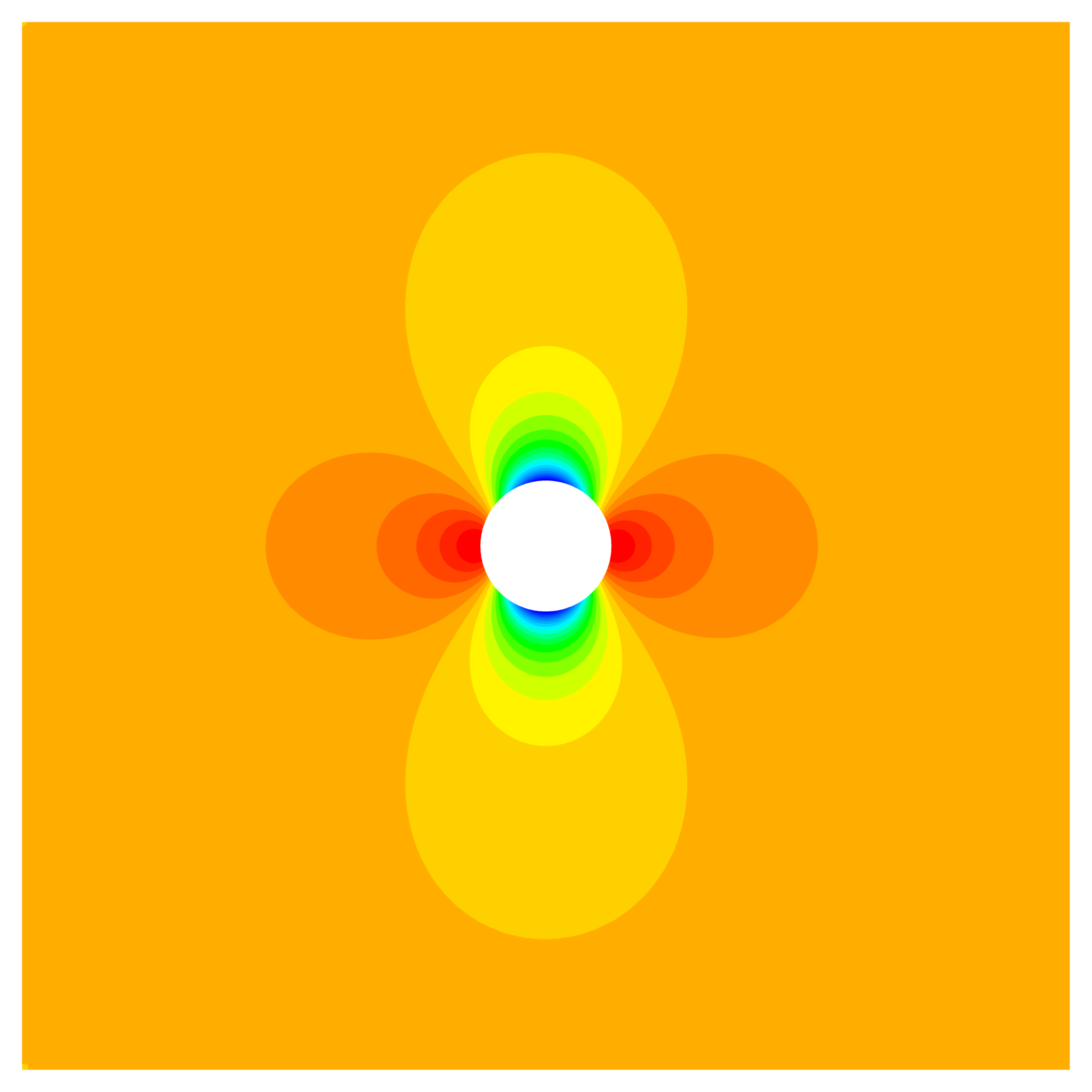}\hfill
	\includegraphics[width=0.48\linewidth]{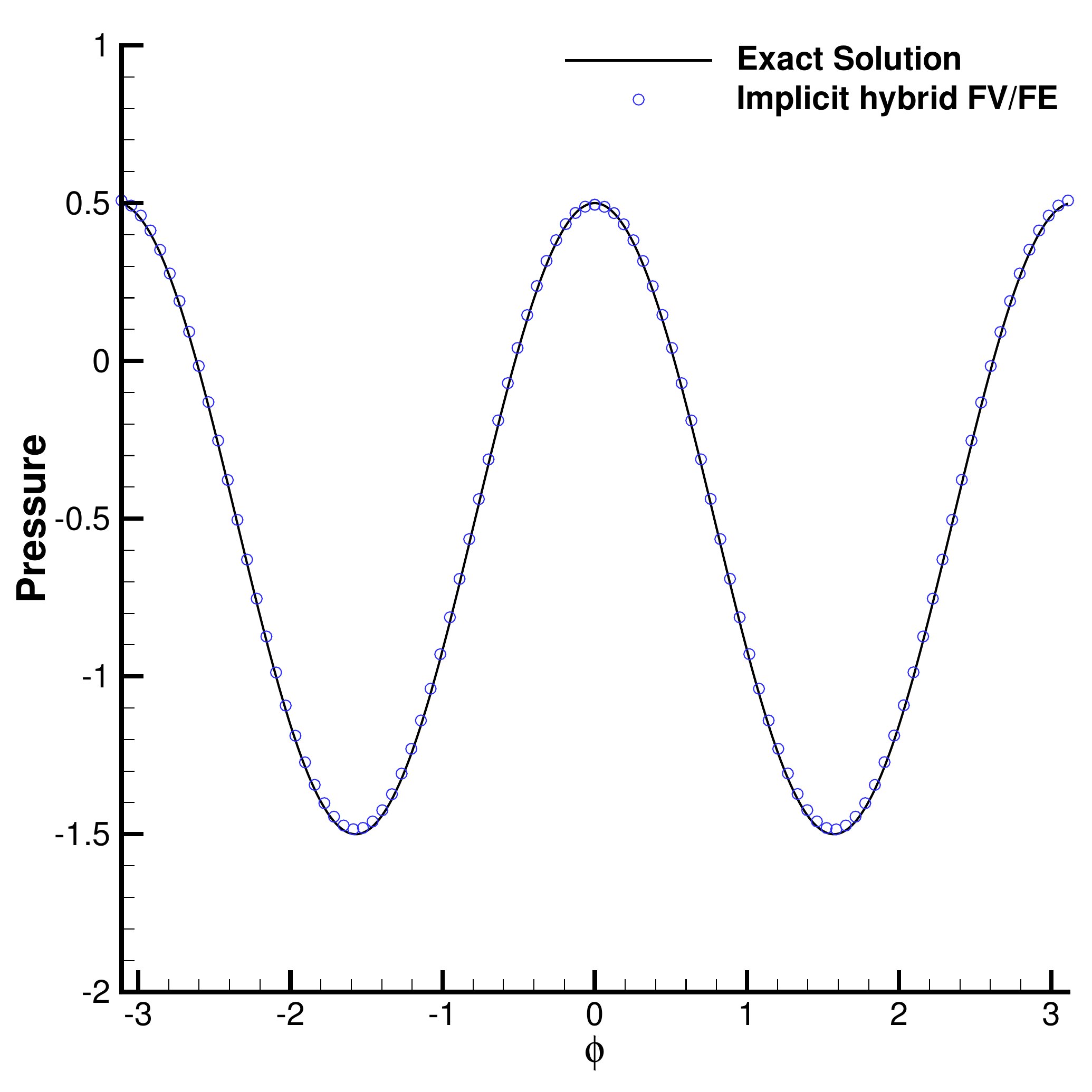}\\
	\includegraphics[width=0.48\linewidth]{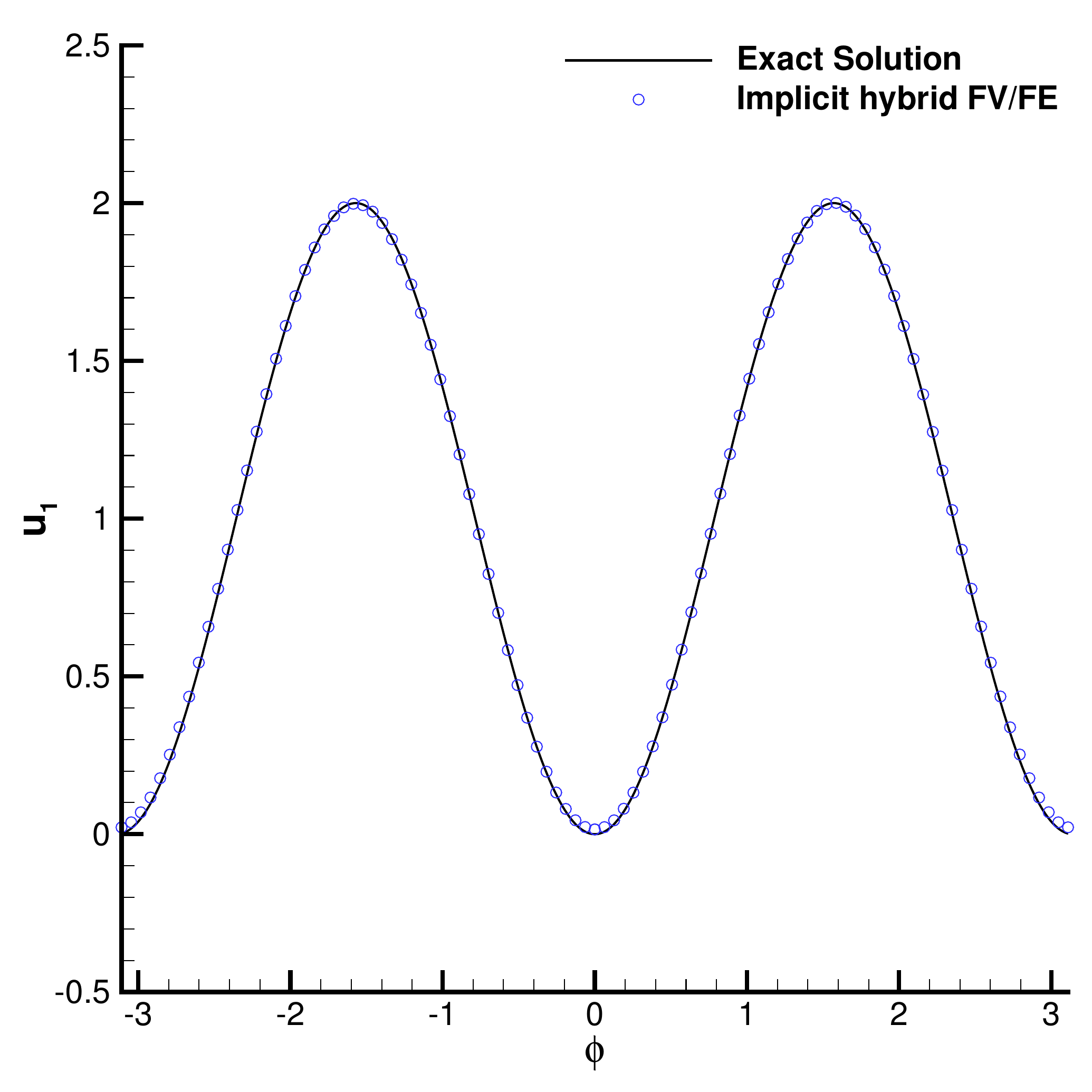}\hfill
	\includegraphics[width=0.48\linewidth]{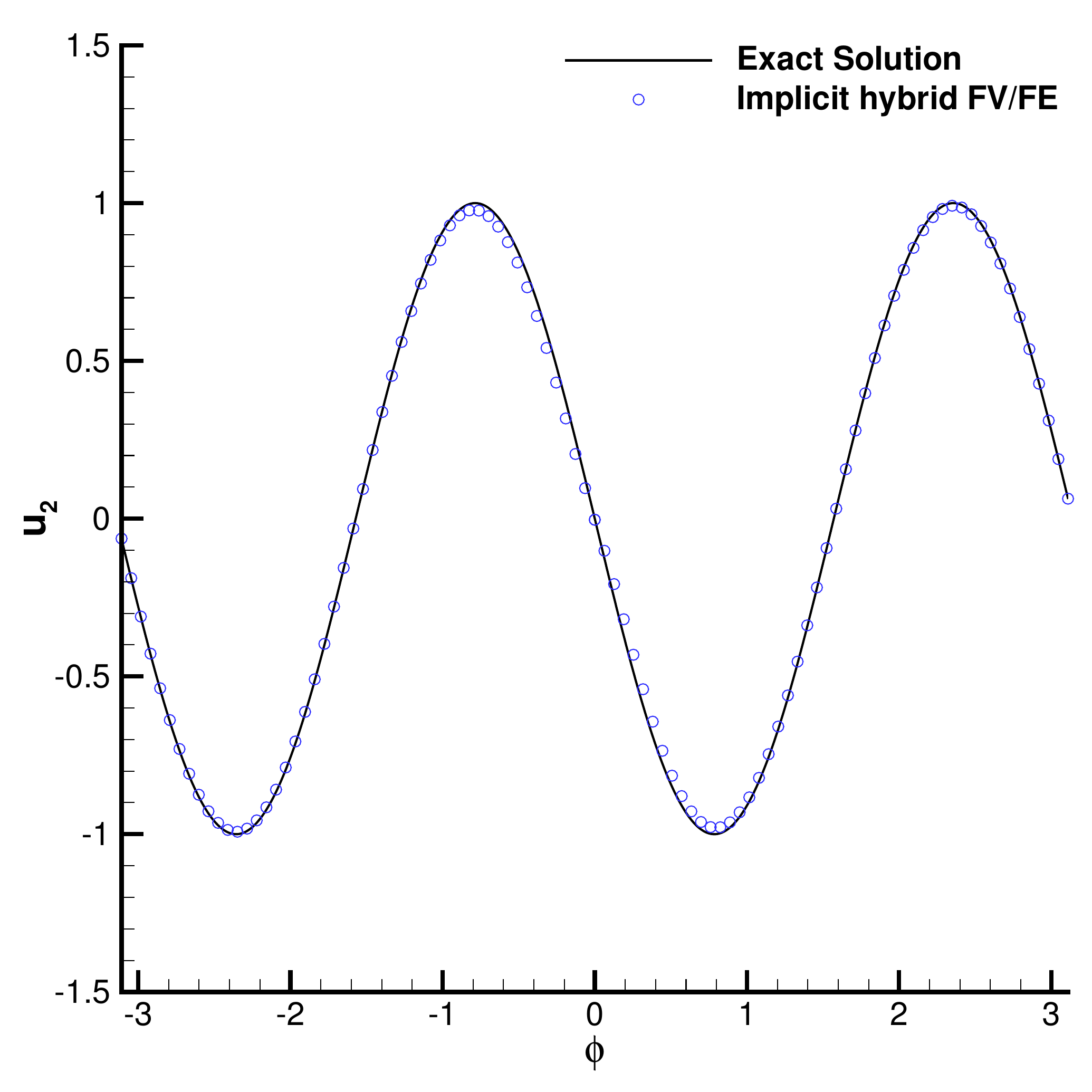}
	\caption{Inviscid flow passing around a cylinder. The top left figure depicts the contour plot of the pressure field obtained using the implicit hybrid FV/FE scheme at time $t=1$. The 1D cuts of the pressure, horizontal and vertical velocities (from  right top to left bottom, blue circles) along the cylinder of radius $r=1.01$ centred at the origin are reported together with the exact known solution (black solid line).}
	\label{fig:invcylpressure2d}
\end{figure}

\subsection{Viscous flow around a cylinder}
We now analyse the case of a viscous fluid flowing around a cylinder, \cite{WillBrown98,HybridALE}. The geometry considered consists in a 2D channel of length $50$ and height $20$ pierced by an embedded cylinder of radius $R=0.5$. At the inlet we set a constant velocity profile of value $\vel=(1,0)$, used also as initial condition The pressure $p=1$ is imposed at the outlet, while the cylinder boundary is assumed to be a viscous no-slip wall. We assume the top and bottom boundaries to be far enough away from the cylinder so that the velocity at infinity is recovered. Hence, it can be weakly imposed as a velocity boundary condition. A grid of $85050$ primal triangular elements with $64$ divisions along the cylinder is employed to discretize the computational domain. To complete the test definition, the viscosity coefficient is calculated to obtain the desired Reynolds number, e.g. to get $\textrm{Re}=185$ we have set the viscosity to $\mu = 5.4054054\cdot 10^{-3}$. The results presented have been obtained using the second order implicit hybrid FV/FE scheme with the Ducros numerical flux function and auxiliary artificial viscosity $c_{\alpha} =0.5$. All simulations are run  until the von Karman vortex street has been fully developed so that the vortex frequency can be computed. Since this benchmark is characterized by the Reynolds and Strouhal numbers, we plot in Figure \ref{fig:RevsSt} the value of $St=\frac{f_v D}{u_1}$ calculated for a set of different Reynolds numbers, $\textrm{Re} \in\left\lbrace 50, 75, 100, 125, 150, 175, 185 \right\rbrace$. The results obtained using a semi-implicit DG scheme of order $4$, \cite{TD14}, the experimental results of \cite{WillBrown98} and the so-called universal Strouhal curve are included for comparison.
Furthermore, the time series of the drag and lift coefficients of the simulation for $\textrm{Re}=185$, reported in Figure \ref{fig:draglift185}, are in agreement with the numerical results presented in \cite{Guilmineau2002,HybridALE}. Also the contour plots of the vorticity field for different times along one periodic cycle are depicted in Figure \ref{fig:contourvorticityviscouscylinder} for a qualitative comparison with further references. 
\begin{figure}
	\centering
	\includegraphics[width=0.6\linewidth]{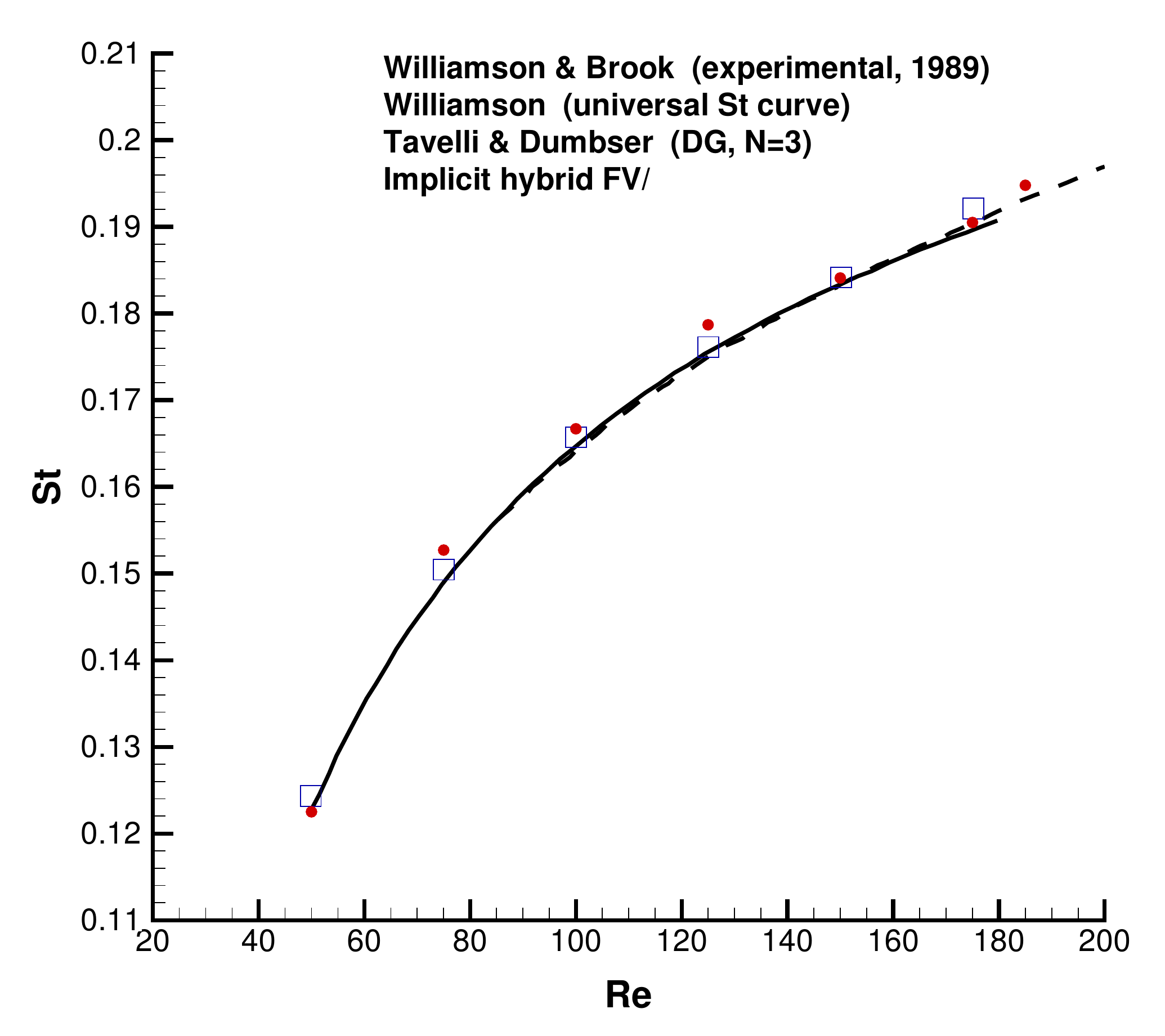}
	\caption{Viscous flow around a cylinder. Relationship of Reynolds and Strouhal numbers for the implicit hybrid FV/FE method (red dots), the
	semi-implicit DG proposed in \cite{TD14} (blue squares), the experimental results of \cite{WillBrown98} (continuous black line) and the so-called universal Strouhal relation (dashed black line).}
	\label{fig:RevsSt}
\end{figure}
\begin{figure}
	\centering
	\includegraphics[width=0.8\linewidth]{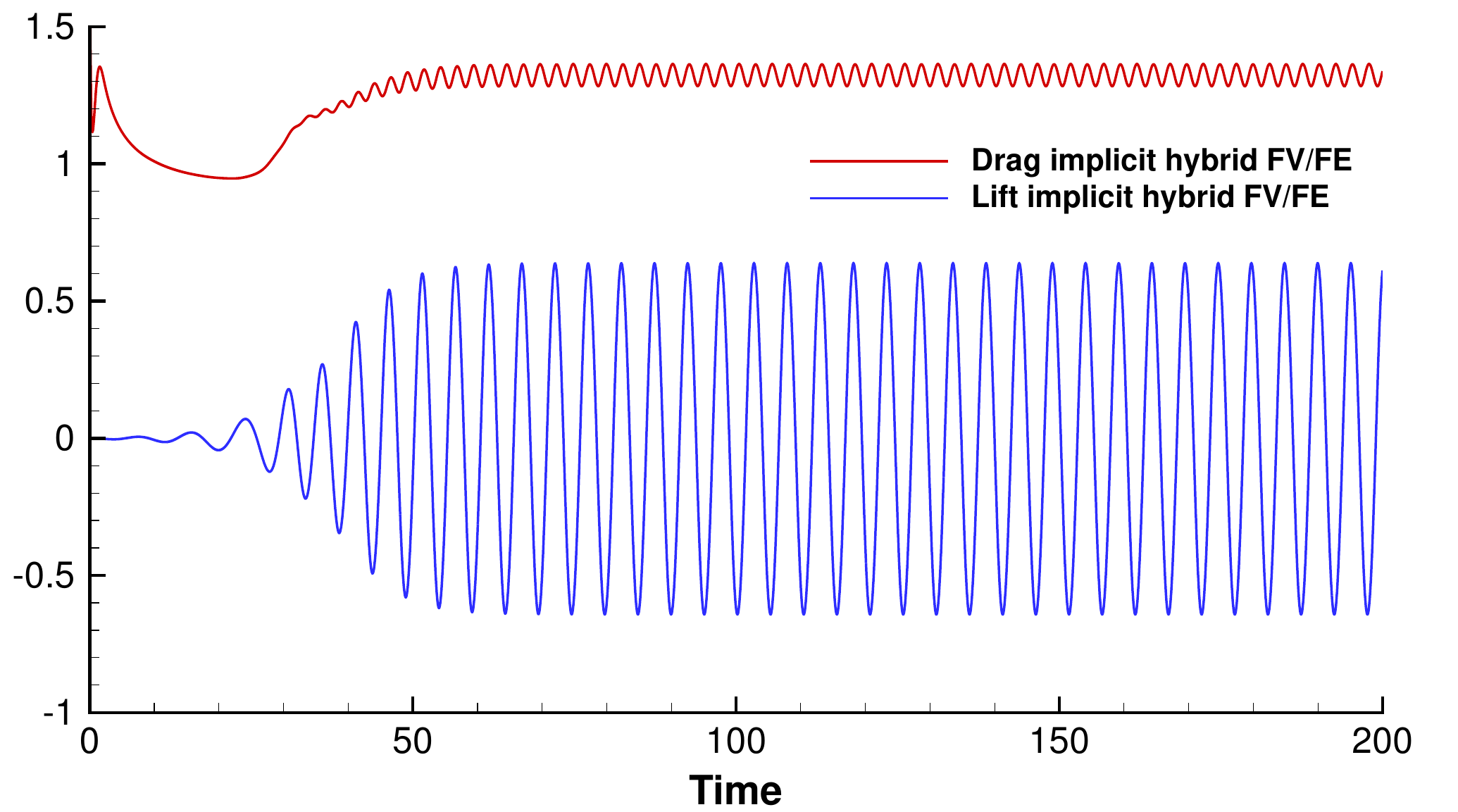}
	\caption{Viscous flow around a cylinder. Drag and lift coefficients for a flow of Re$=185$  obtained using the implicit hybrid FV/FE scheme with the Ducros flux function, $c_{\alpha}=0.5$ and $\Delta t = 10^{-2}$.}
	\label{fig:draglift185}
\end{figure}
\begin{figure}
	\centering
	\includegraphics[width=0.49\linewidth]{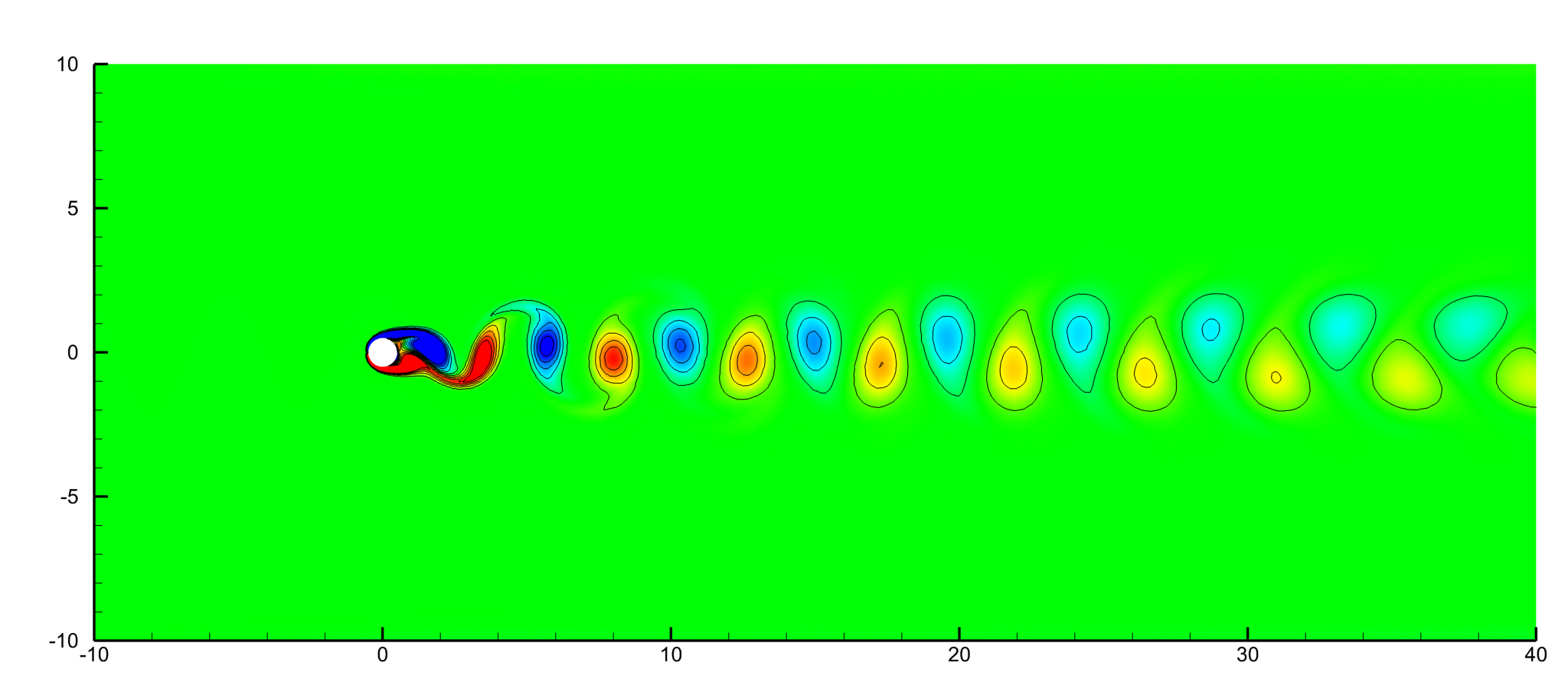}
	\includegraphics[width=0.49\linewidth]{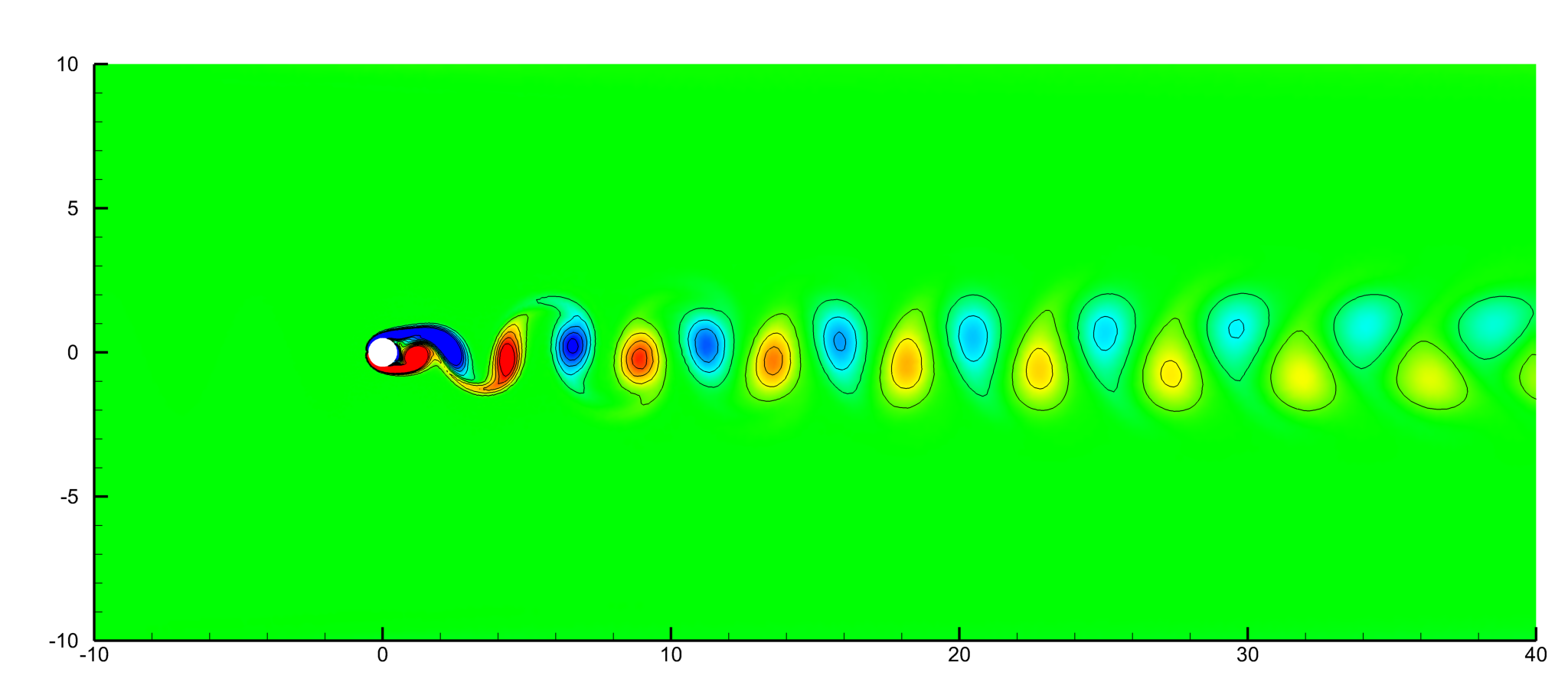}
	\includegraphics[width=0.49\linewidth]{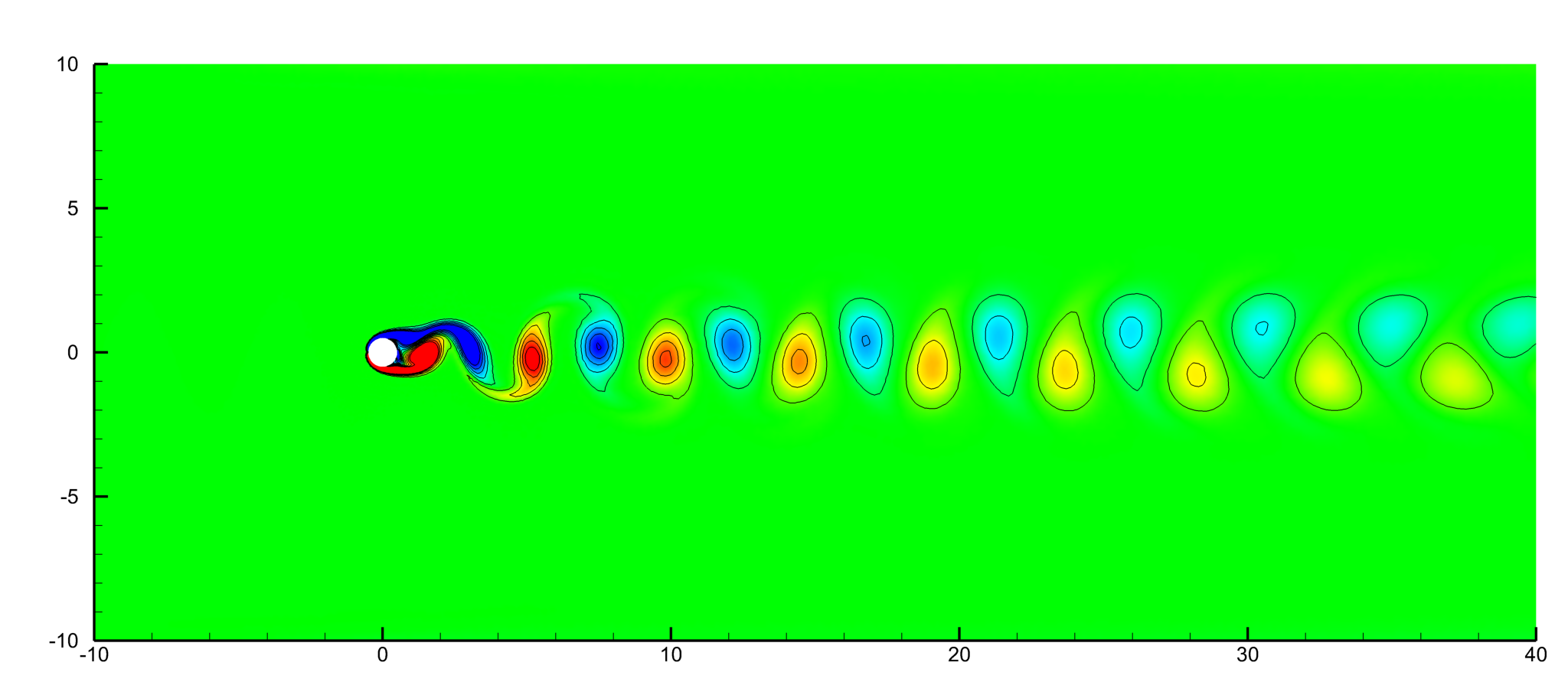}
	\includegraphics[width=0.49\linewidth]{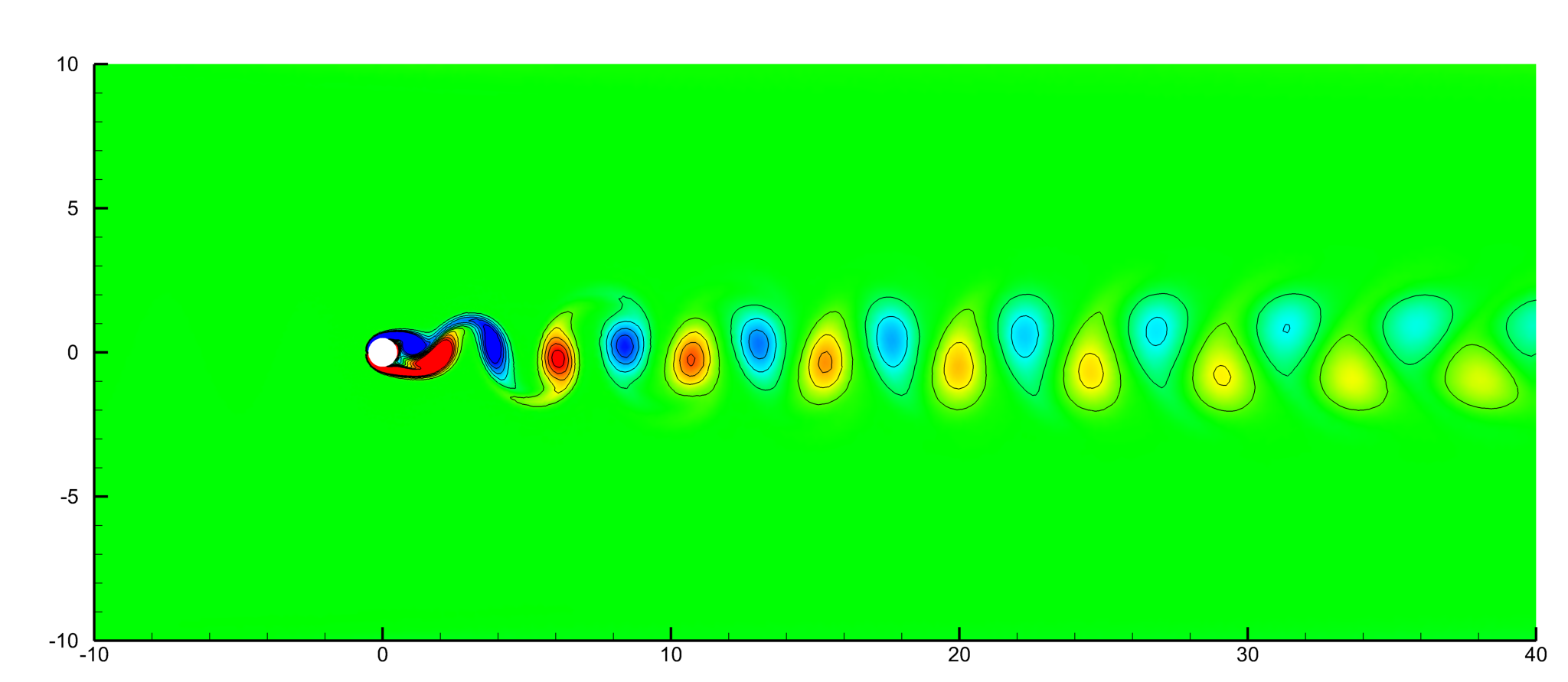}
	\includegraphics[width=0.49\linewidth]{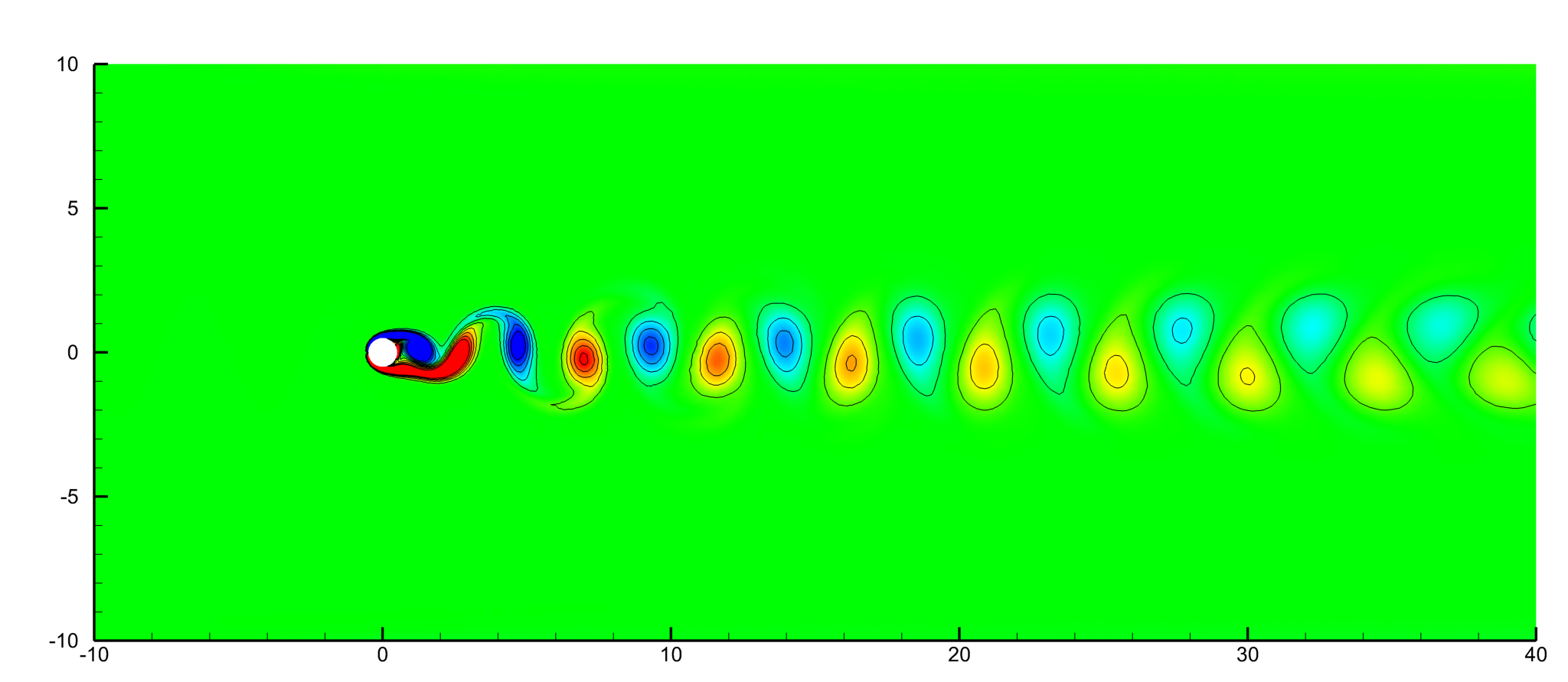}
	\includegraphics[width=0.49\linewidth]{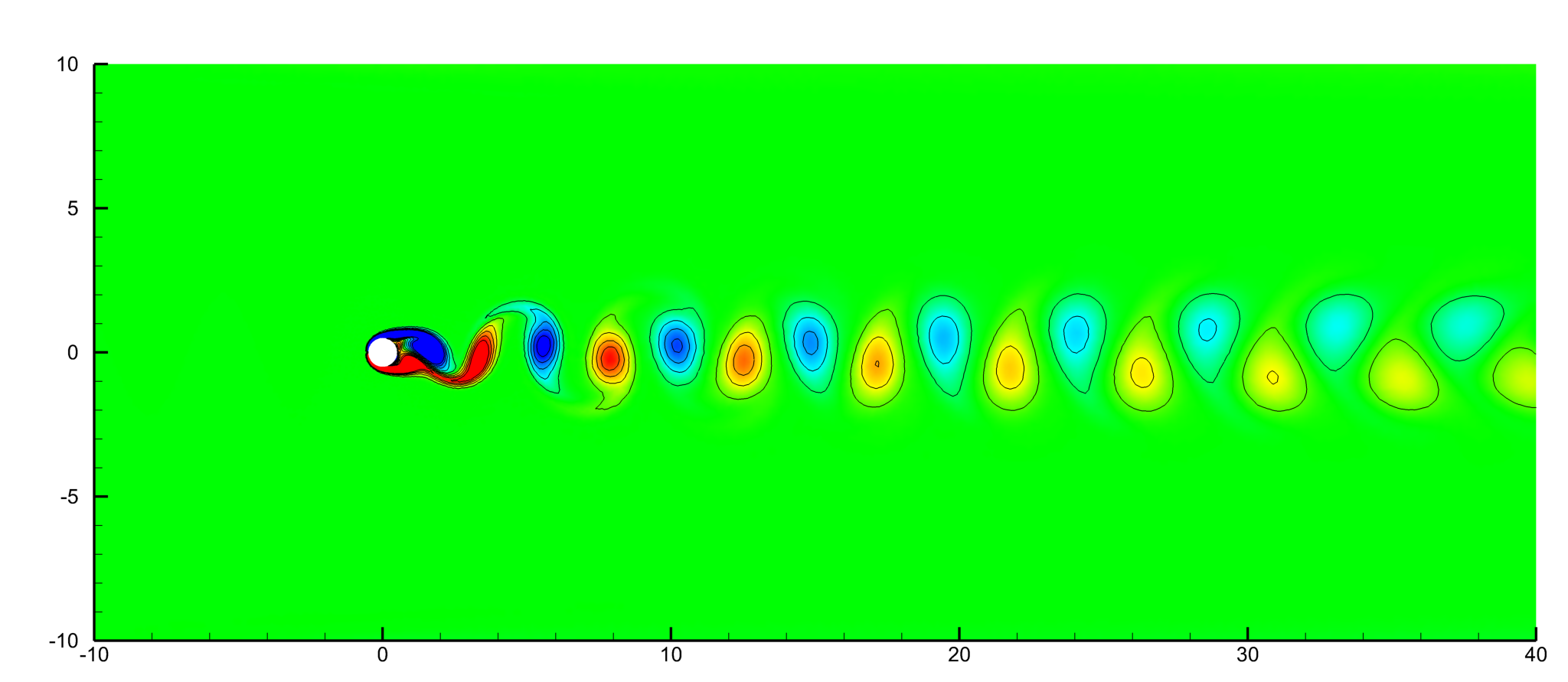}
	\caption{Viscous flow around a cylinder. Contour plots of the vorticity field for a flow of Re$=185$ around a cylinder at times $100,101,102,103,104,105$ (from top left to bottom right).}
	\label{fig:contourvorticityviscouscylinder}
\end{figure}

In this test case the reordering of elements for the preconditioner is crucial, otherwise the convergence of the method is much slower. To illustrate how the mesh reordering is working in a parallel simulation we portray in Figure \ref{fig:ViscCyl_meshreordering} the original dual cell index and the reordered one.
\begin{figure}
	\centering
	\includegraphics[trim =0 70 0 0,clip,width=0.9\linewidth]{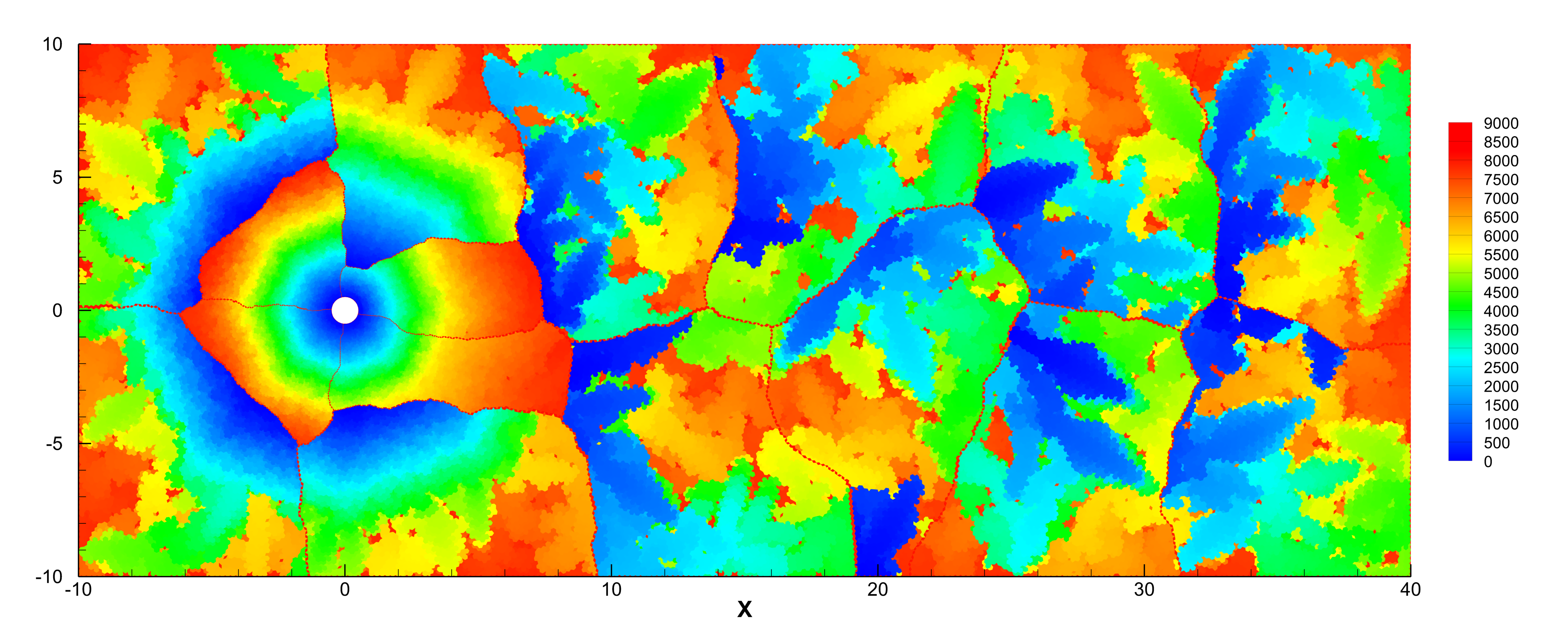}
	\includegraphics[trim =0 70 0 0,clip,width=0.9\linewidth]{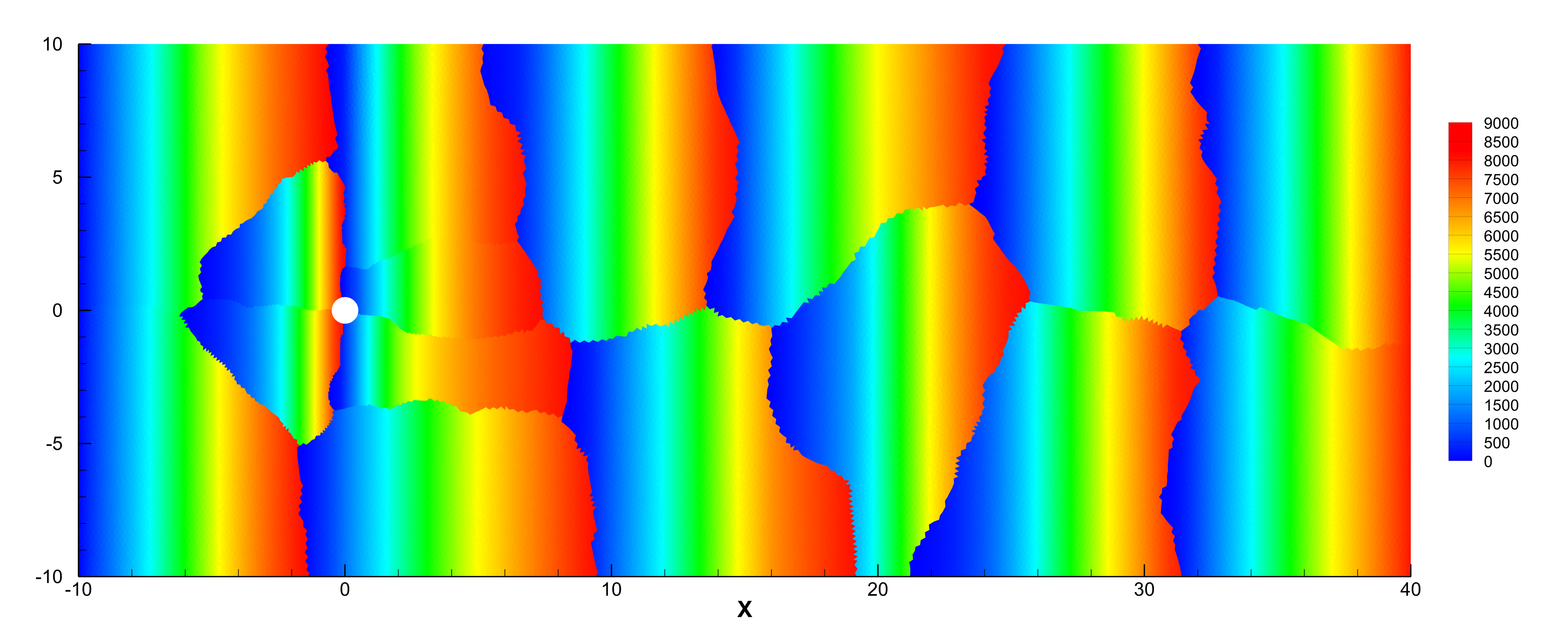}
	\caption{Viscous flow around a cylinder. Dual cell index for a partition in 16 CPUs. Top: original index number inside each partition. Bottom: index after reordering for the preconditioner.}
	\label{fig:ViscCyl_meshreordering}
\end{figure}

Besides, we also analyze the speed-up of the fully-implicit hybrid scheme with respect to the semi-implicit hybrid method presented in \cite{HybridNNT}. The semi-implicit hybrid FV/FE scheme employed discretizes the nonlinear convective terms explicitly, while the viscous terms and the pressure system are treated implicitly. Accordingly, the CFL time step restriction, taken as CFL$=0.5$, only depends on the eigenvalues of the transport subsystem. The Ducros and Rusanov numerical flux functions have been considered for both schemes. The computational cost of the simulations for $\textrm{Re}=185$ up to time $200$ is reported in Table \ref{tab:viccyl_times}. All simulations have been carried out on 128 CPU cores of the Finisterrae III supercomputer at CESGA, which accounts for Intel$^{\textrm{\textregistered}}$ Xeon Ice Lake 8352Y processors with 32 cores at 2.2GHz per node.

\begin{table}
	\centering
	\begin{tabular}{lcc}
		\hline
		& Semi-implicit scheme &  Implicit scheme \\
		\hline
		Rusanov & $16120.12$ & $1844.52$ \\
		Ducros  & $16442.73$ & $1412.57$ \\
		\hline
	\end{tabular}
	\caption{Viscous flow around a cylinder. CPU time (s) employed to run the simulation for $\textrm{Re}=185$ up to time $200$ using the novel fully implicit hybrid FV/FE scheme and the semi-implicit hybrid FV/FE method in \cite{HybridNNT}.}
	\label{tab:viccyl_times}
\end{table}

\subsection{Hagen-Poiseuille benchmark}
Here, we test the proposed method in the context of a classical three-dimensional academic test for blood flow solvers. We consider a stationary fluid in a three-dimensional pipe of axis $x_{3}$, length $L=1$ and radius $R=0.1$. A constant pressure gradient $\Delta p < 0$ is imposed between its two ends, enforcing the fluid to flow in $x_3$-axis direction. This benchmark corresponds to a Hagen-Poiseuille flow whose exact solution is a parabolic velocity profile given by, see e.g. \cite{SG16}, 
\begin{equation}
	\mathbf{u}=\left(0,0,\frac{1}{4}\frac{\Delta p}{L}\frac{\rho}{\mu}(r^2-R^2)\right).
\end{equation}
As initial condition we consider a fluid at rest. Moreover, we set $\rho=1$ and $\mu=0.1$. The pressure gradient imposed between the left inflow and the right outlet is $\Delta p=-4.8$, leading to a maximum velocity in $x_3$-axis direction of $0.12$. 
The computational results at time $t=10.13$ are shown in Figure \ref{fig:HP3D}, where we compare the sectional velocity across the flow at the mid section against the exact solution. The computed numerical solution excellently matches the exact one. In Figure \ref{fig:HP3D_velcont} the velocity field in the $x_2=0$ plane is depicted. 

\begin{figure}[h]
	\centering
	\includegraphics[trim =10 10 10 10,clip,width=0.7\linewidth]{./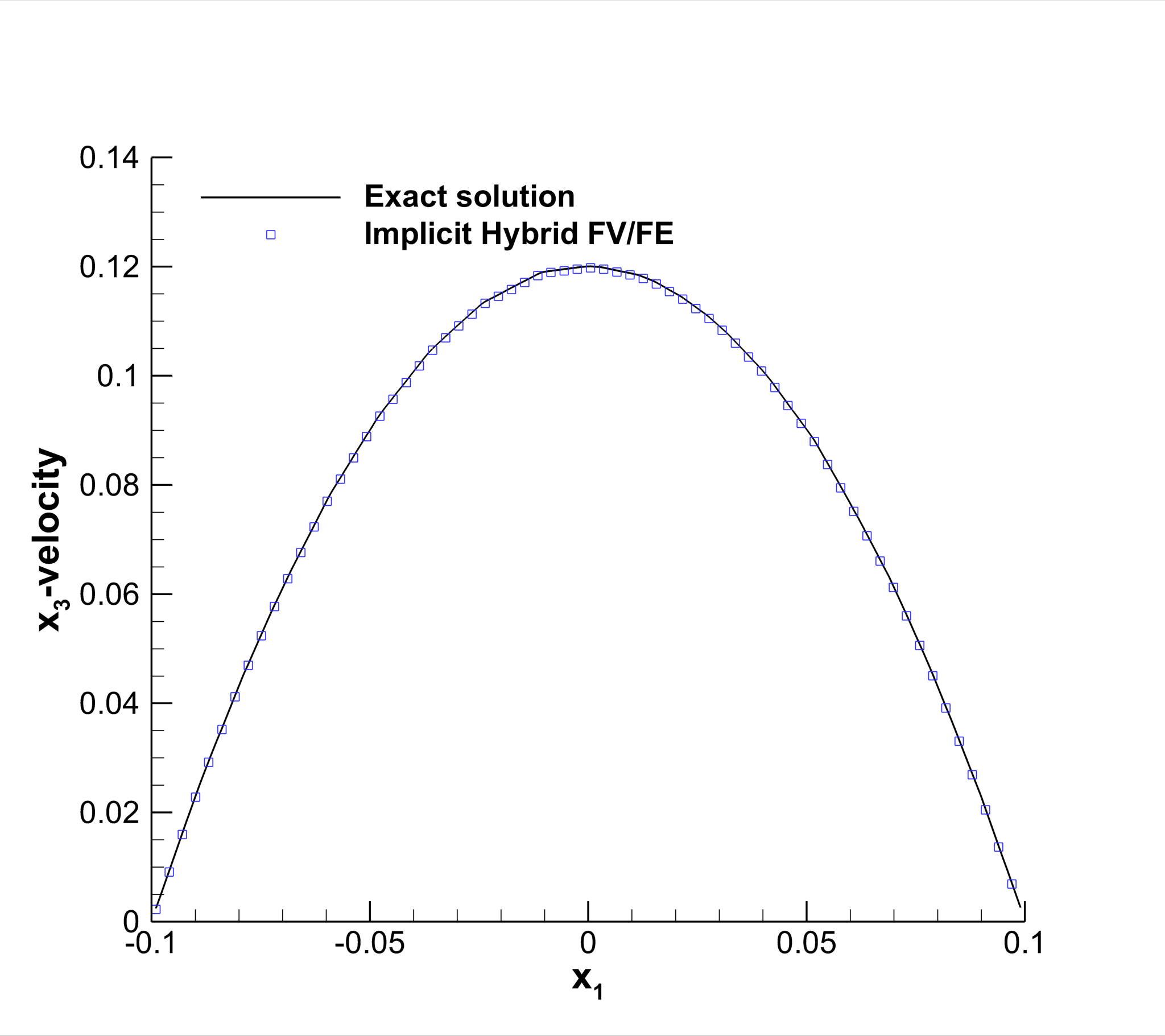}
	\caption{Steady laminar Hagen-Poiseuille flow in a cylinder. 1D cut along the $x_1$ axis at $x_2=0$ and $x_3=L/2$. }
	\label{fig:HP3D}
\end{figure}
\begin{figure}
	\centering \includegraphics[trim =10 10 10 10,clip,width=0.9\linewidth]{./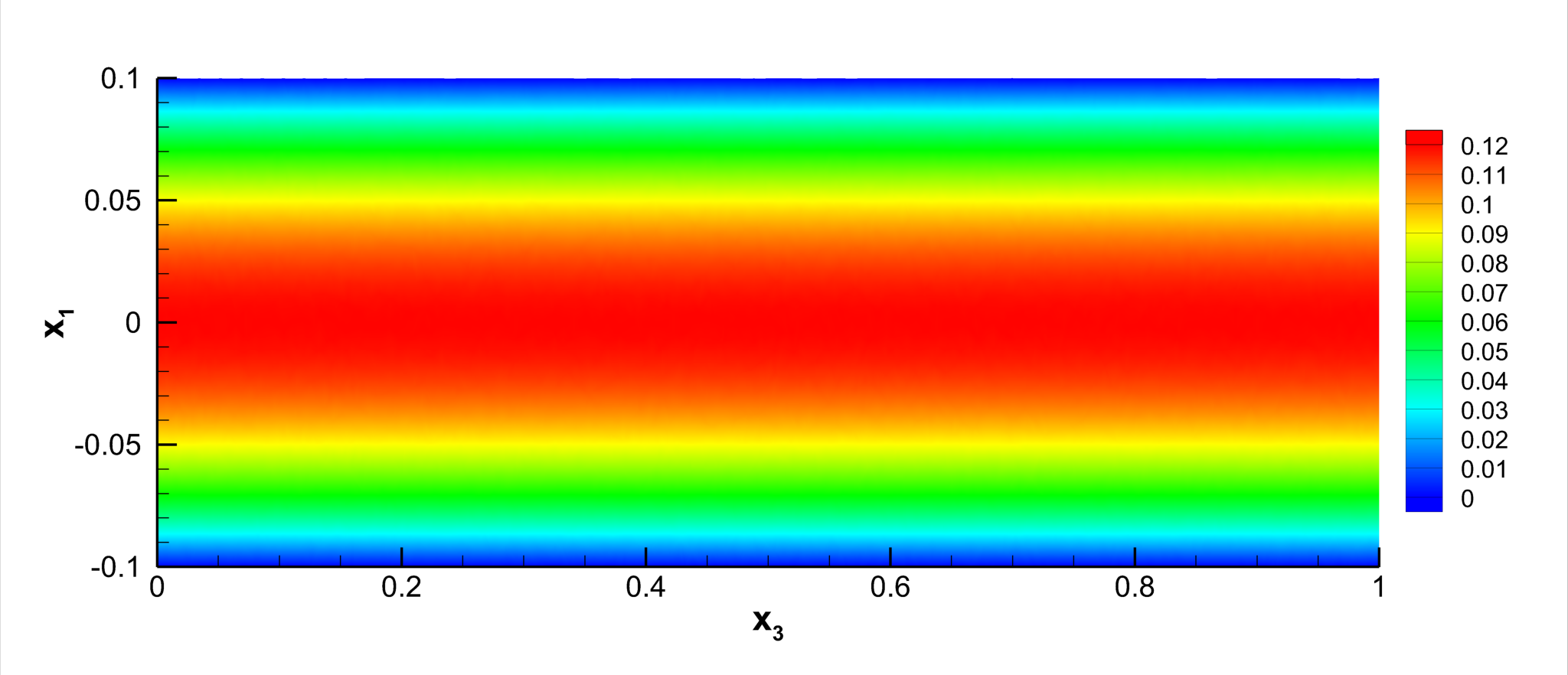}
	\caption{Steady laminar Hagen-Poiseuille flow in a cylinder. Velocity $u_{3}$ contour in the $x_2=0$ plane.}
	\label{fig:HP3D_velcont}
\end{figure}

\subsection{Womersley problem}
In this section the implicit hybrid FV/FE algorithm is validated against the exact solution for an oscillating flow of viscosity $\mu=8.94\cdot 10^{-4}$ passing between two flat parallel plates,  \cite{Womersley:1955,Loudon1998}. 
The unsteady flow is driven by a sinusoidal pressure gradient on the boundary given by 
\begin{equation}
    \frac{\partial p}{\partial x_1}=\frac{p_{out}(t)-p_{in}(t)}{l}=-Ae^{int},
    \label{eq:pgradient}
\end{equation}
where $A=1$ is the amplitude of the pressure gradient, $n$ denotes the frequency of the oscillation, $i$ represents the imaginary unit, $l$ is the length of the plates and $p_{in}$ and $p_{out}$ are the pressure at the inlet and at the outlet, respectively. By imposing \eqref{eq:pgradient} at the two ends and a no-slip boundary condition on the upper and lower flat plates, the resulting velocity field can be expressed in complex form as 

\begin{equation*}
    \mathbf{u}(x_1,x_2,t)=\left(\frac{A}{i \, \rho n}\left(1-\frac{\cosh(\textnormal{Wo} \,  i^{1/2}\frac{x_2}{a})}{\cosh(\textnormal{Wo} \, i^{1/2})}\right)e^{i \, nt},0\right),
\end{equation*}
where the Womersley number, $\textnormal{Wo}=a\sqrt{\frac{n}{\nu}}=10$, with $\nu$ the kinematic viscosity and $2a$ the distance between the plates, describes the nature of the unsteady flow. For the simulation we consider the computational domain $\Omega=[-0.5,1]\times[-0.2,0.2]$ discretized with a mesh of $228$ primal elements. The time step is fixed during the whole simulation to $\Delta t=2.5\cdot 10^{-3}$ so that we properly follow the flow oscillations.
The obtained velocity profile along the vertical cut  $x_1=0$ is reported in Figure \ref{fig:Wo} for $t\in\{ 0.35,0.7,1.4,2.1,2.45 \}$. A good agreement between exact and numerical solution can be observed.
\begin{figure}
    \centering
    \includegraphics[trim =10 10 10 10,clip,width=0.7\linewidth]{./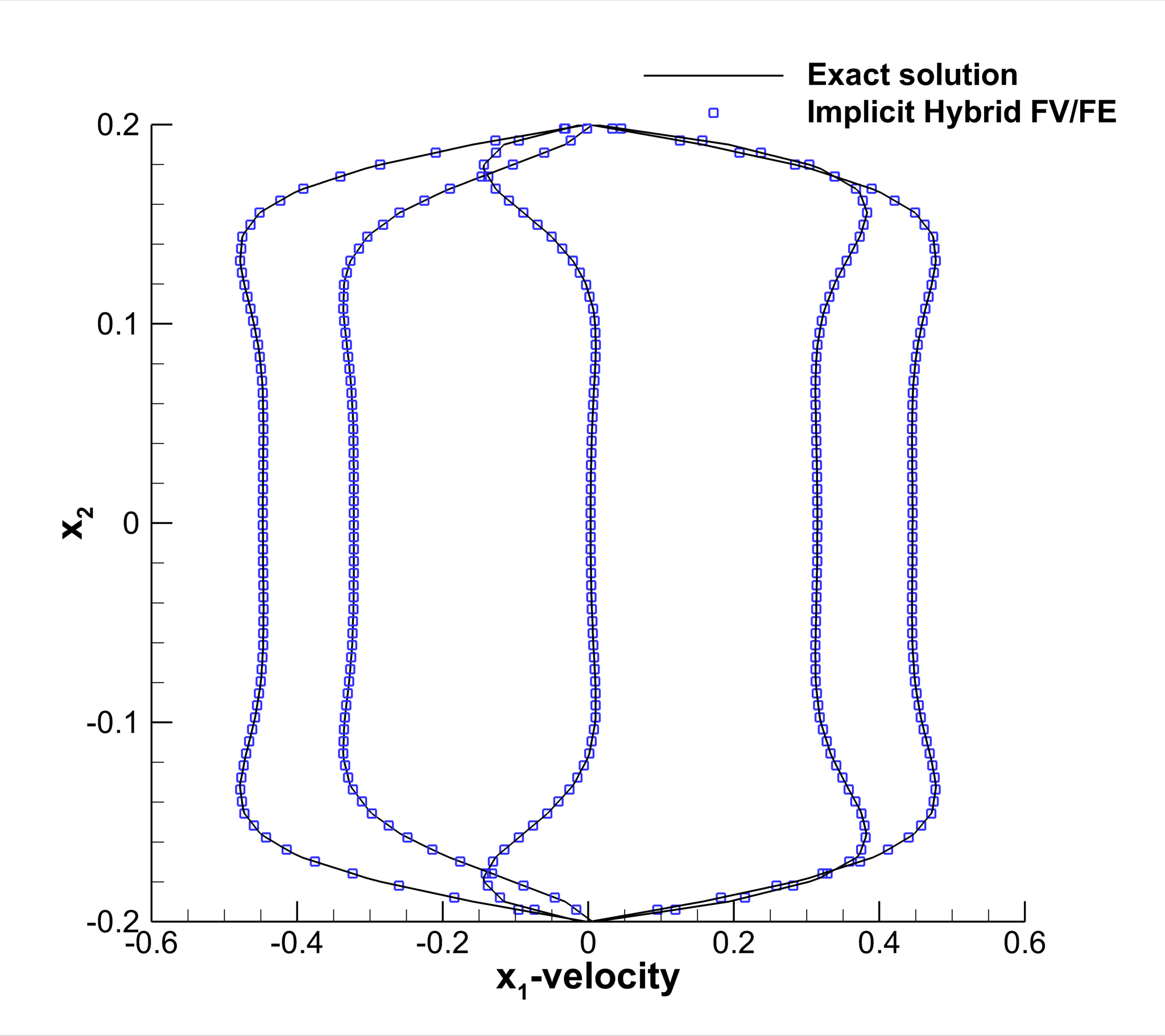}
    \caption{Womersley problem. Comparison between the exact and the numerical solution for the Womersley profiles at times $t=2.1$s, $t=2.45$s, $t=1.4$s, $t=0.35$s, $t=0.7$s, from left to right. }
    \label{fig:Wo}
\end{figure}

\subsection{Ideal artery with stenosis}
To apply the presented algorithm to a more realistic test case in the context of blood flow dynamics, we propose the study of the flow inside an ideal stenotic artery. To this end, we consider a steady flow of a viscous fluid in a duct of length $L=2$ with axis $x_1=0$ which has a shrinkage of $40\%$ in diameter at the half length.
The pressure gradient imposed between the left and right boundaries of the computational domain drives the fluid flow in the $x_1-$direction. On the lateral boundary, no-slip wall boundary conditions are imposed. 
Initially the flow is assumed to be at rest, the density and viscosity are set to  $\rho=1.0$,  $\nu=0.01$ and we define a fixed time step equal to $\Delta t=0.5$. Four different simulations regarding the method employed are run: the GMRES-Newton method and the BiCGStab-Newton algorithm with both the first and second order approach for the convective terms. 
Figure \ref{fig:Stenosis1} shows the velocity contour on a longitudinal clip of the domain together with the streamlines over its surface. Moreover, the velocity profiles of each of the four simulations on the section $x=-0.5$, upstream the stenosis, and the section located at the maximal narrowing of the duct, $x=0$, are reported in Figure \ref{fig:Stenosis2}. We can observe that both methods provide numerical solutions in strict accordance using either the first or the second order in space scheme.
\begin{figure}[h]
    \centering
    \includegraphics[trim =10 10 10 10,clip,width=0.8\linewidth]{./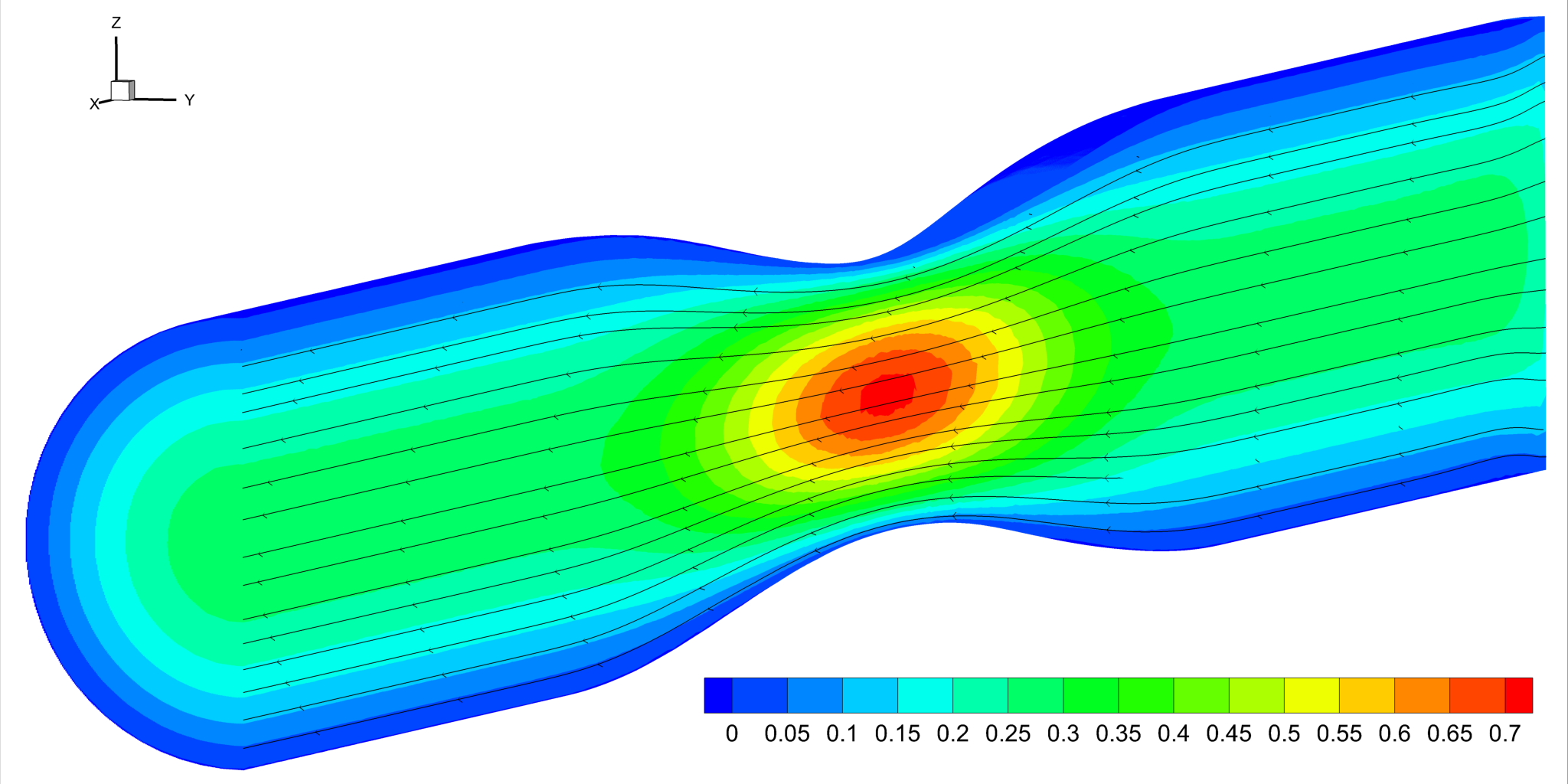}
    \caption{Ideal artery with stenosis Test. Contours of the velocity field on a longitudinal clip of the domain and streamlines over its surface.}
    \label{fig:Stenosis1}
\end{figure}    
\begin{figure}[h]
    \centering
    \includegraphics[trim =10 10 10 10,clip,width=0.45\linewidth]{./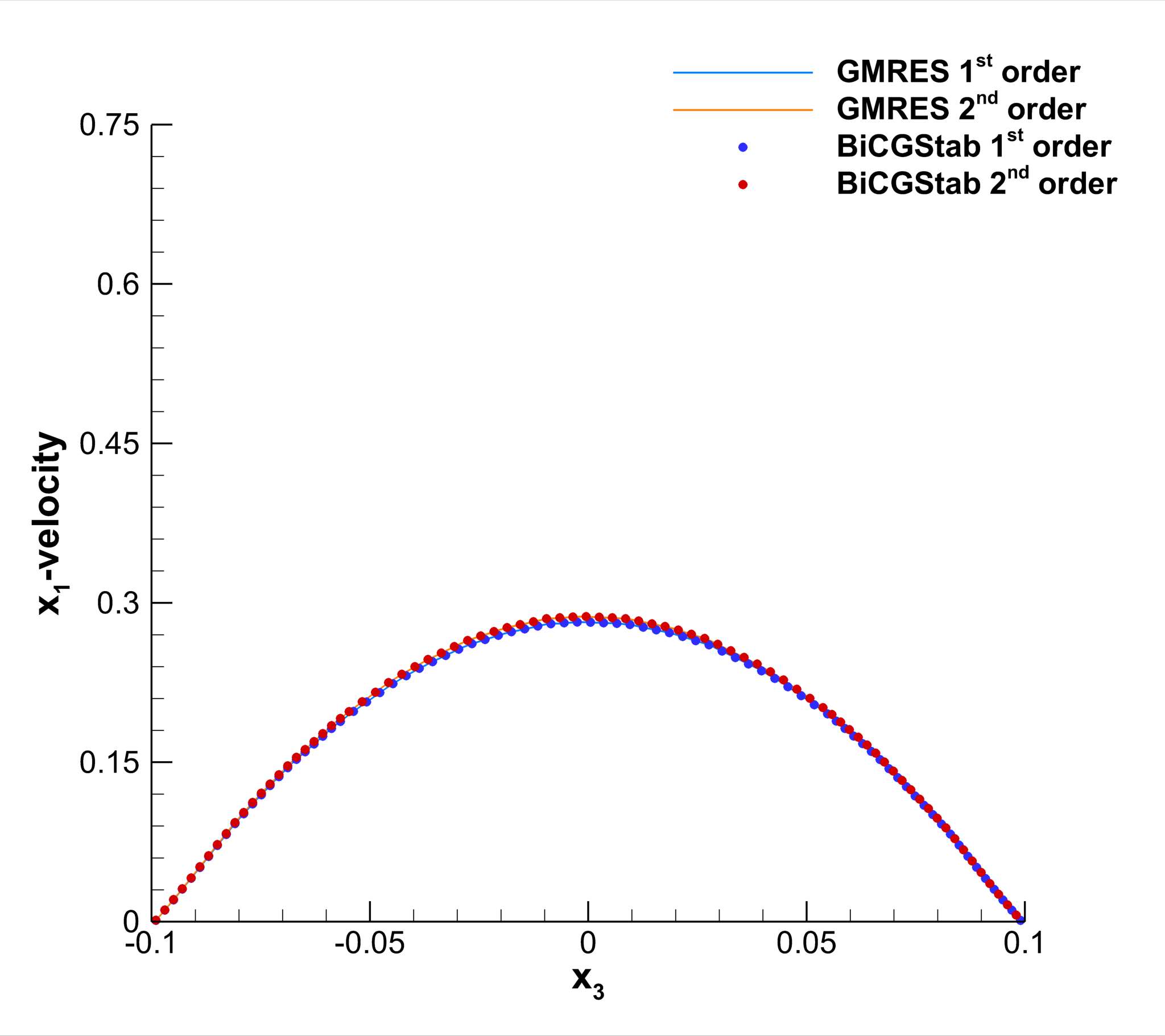}\quad
    \includegraphics[trim =10 10 10 10,clip,width=0.45\linewidth]{./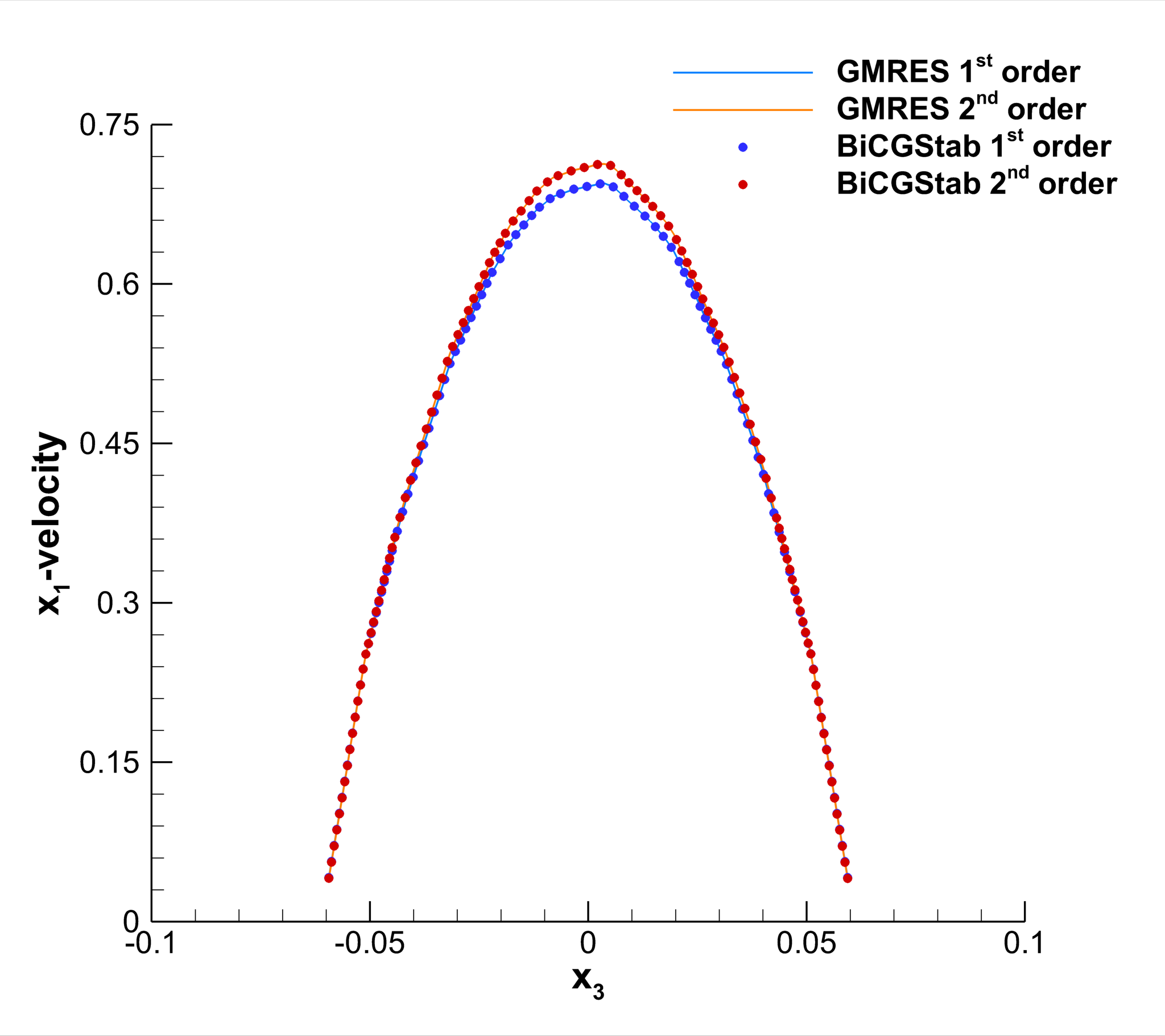}
    \caption{Ideal artery with stenosis Test. Left: velocity profiles upstream the stenosis at $x=-0.5$. Right: velocity profiles at the narrowest section $x=0$.}
    \label{fig:Stenosis2}
\end{figure}

In addition, to illustrate how the mesh reordering works on a 3D domain, we report in Figure \ref{fig:Stenosis3} the original dual cell index and the reordered one for a simulation carried out sequentially.
\begin{figure}
	\centering
	\includegraphics[trim =10 10 10 10,clip,width=0.8\linewidth]{./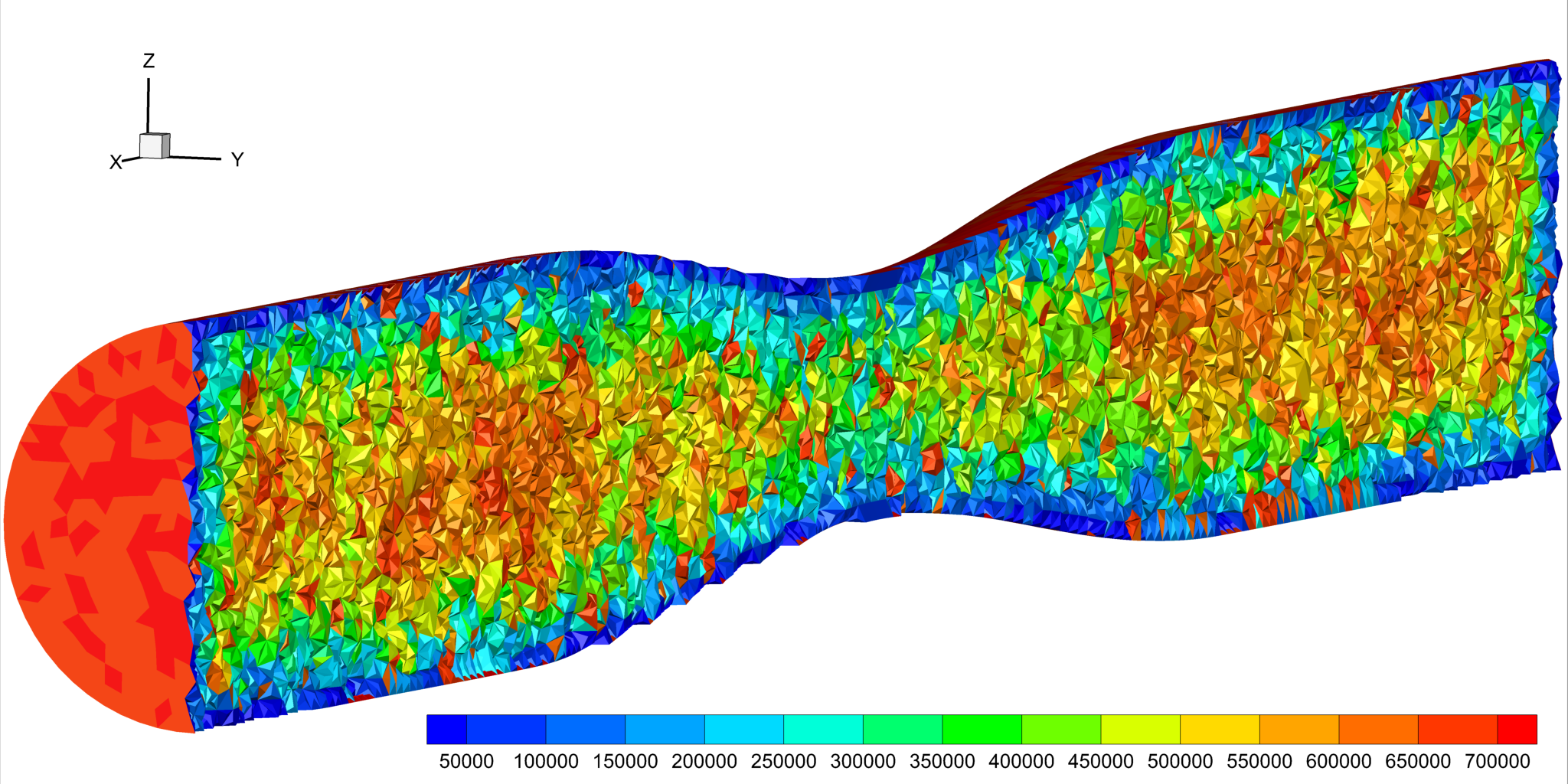}
	\includegraphics[trim =10 10 10 10,clip,width=0.8\linewidth]{./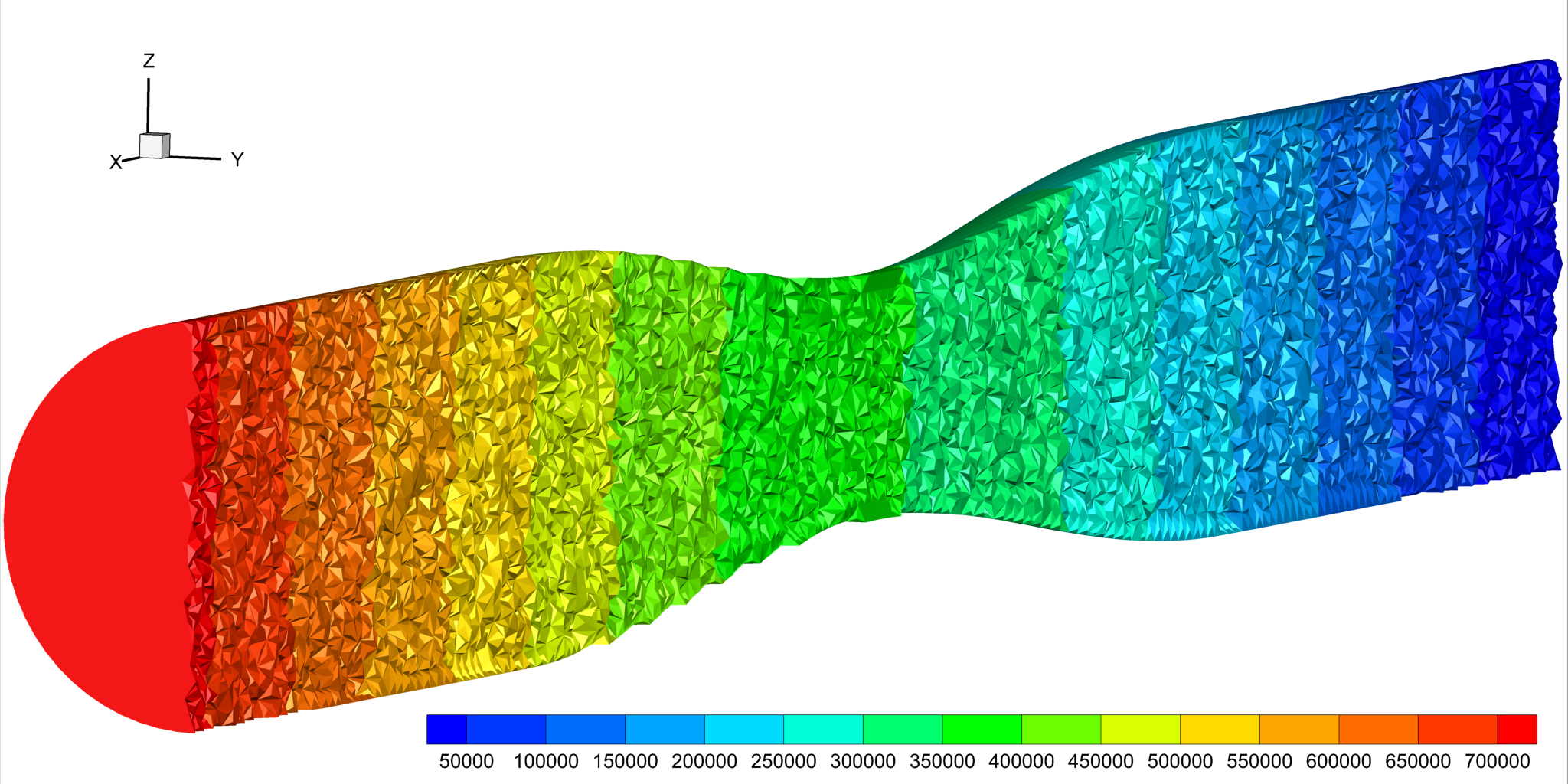}
    \caption{Ideal artery with stenosis. Top: original dual cell index inside the domain. Bottom: index after the reordering for the preconditioner.}
	\label{fig:Stenosis3}
\end{figure}

\subsection{Realistic test case: a coronary tree}
As last test we present a blood flow simulation for a realistic three-dimensional configuration. The geometry is segmented from the coronary computed tomography angiography image of a patient affected by a coronary artery disease. It represents a coronary tree, which includes the left main, the circumflex artery (LCX) and the left anterior descending artery (LAD) with the main diagonal branches. A severe stenosis of $85$\% occlusion is located at the mid LAD, while a minor lesion of $45\%$ is located at mid LCX.

\textcolor{black}{Following the modelling pipeline described in \cite{fossan2018,muller2021impact},} we simulate a hyperemic steady state flow through the entire coronary tree by prescribing a flow distribution among the vessels as outflow boundary condition and imposing a mean aortic pressure of $95.658$ mmHg at the inlet section. The total flow passing through the coronary tree is $4.02$ $cm^3/s$, the density is set to $1.05$ $g/cm^3$ and the viscosity is $0.035$ $cm^2/s$. 
The convective-diffusive system is solved by exploiting the preconditioned Newton-BiCGStab algorithm with the first order Ducros flux function and an auxiliary artificial viscosity of $c_{\alpha}=50$. 

The presence of a severe stenosis in a vessel causes a sharp drop in pressure, as we can observe in Figure \ref{fig:CorTree_P}. Indeed, distal to the vessel's occlusion we record a pressure reduction of $47.53$\%. Moreover, Figure \ref{fig:CorTree_V} highlights how a tortuous geometry with curvatures and sinus affects the shape of the sectional velocity profiles throughout the domain.
\begin{figure}
	\centering
	\includegraphics[trim =10 10 10 10,clip,width=0.7\linewidth]{./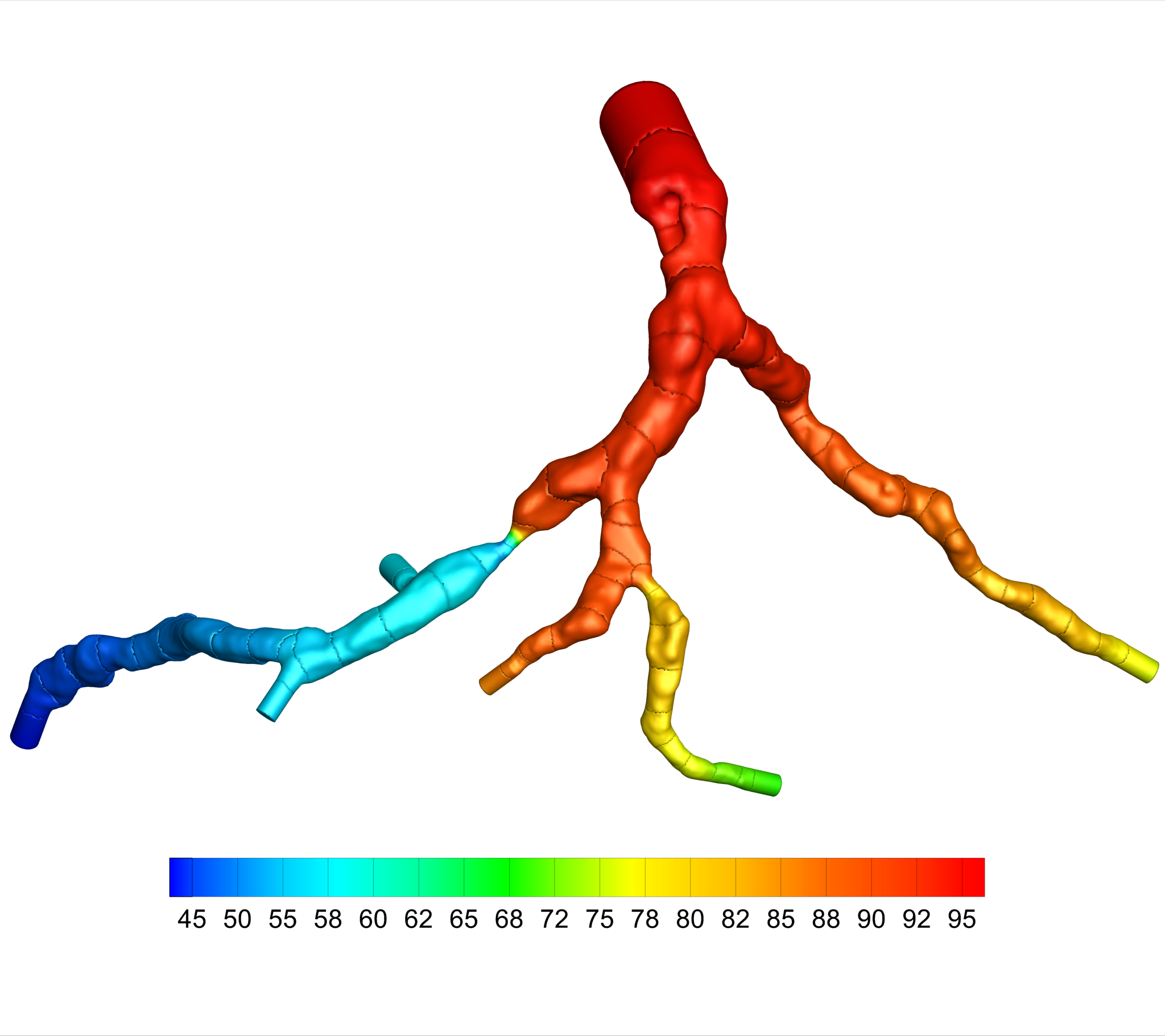}
	\caption{Coronary tree Test. Pressure distribution among the entire domain.}
	\label{fig:CorTree_P}
\end{figure}

\begin{figure}
	\centering
	\includegraphics[trim =10 10 10 10,clip,width=0.7\linewidth]{./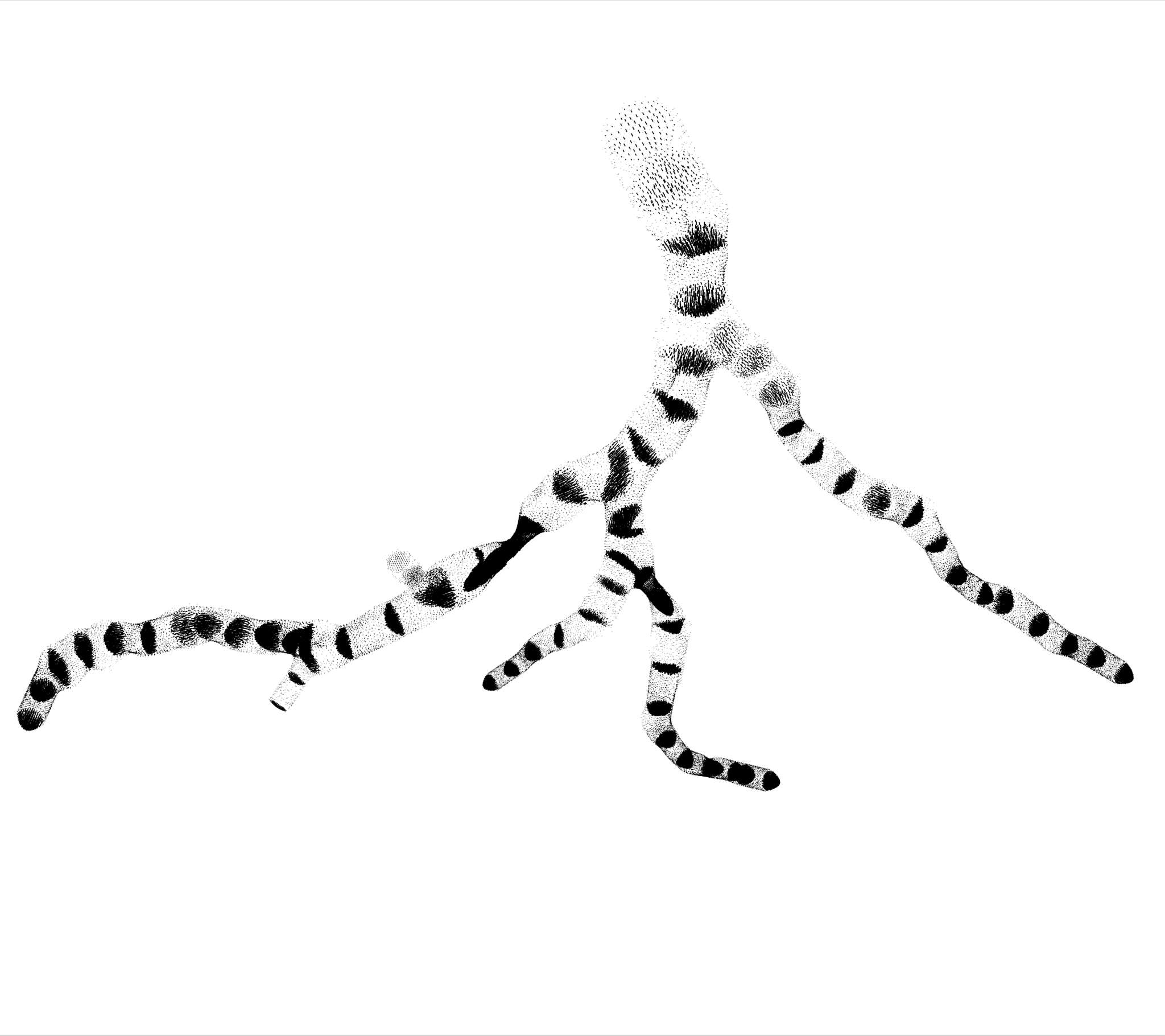}
	\caption{Coronary tree Test. Sectional velocity profiles through the domain.}
	\label{fig:CorTree_V}
\end{figure}
In case of coronary artery disease, the gold standard for diagnosis of functional severity of ischemia-inducible coronary stenosis is the Fractional Flow Reserve (FFR) \cite{pijls1995}. During catheterization, after having pharmacologically induced hyperemia, a guiding wire equipped with a pressure sensor is inserted into the coronary artery to record the pressure in the aorta ($p_{a}$) and the pressure approximately 2-3 cm distal to the lesion ($p_{d}$) to be investigated. FFR is then determined as a ratio between the mean values of $p_{d}$ and $p_{a}$ tracings, namely
\begin{equation*}
    FFR=\frac{p_{d}}{p_{a}},
\end{equation*}
and takes values between 0, standing for a complete occlusion of the vessel, and 1, corresponding to a healthy patient.
According to the FFR value, the decision on whether to proceed with surgical intervention or if it is sufficient to treat the patient with Optimal Medical Therapy is taken. \textcolor{black}{More precisely, trials evaluating the prognostic impact of the FFR have shown that a lesion is haemodynamically relevant if FFR $< 0.75$ and then revascularisation is recommended \cite{PIJLS2012}. } For the patient here considered, three FFR measurements at different locations are computed. The first one is at the end of LCX, while the others are along the LAD after the stenosis. 
\textcolor{black}{Table \ref{tab:FFR} reports the FFR values obtained using the proposed hybrid FV/FE methodology as well as the results obtained employing the open-source library CBCFLOW \cite{cbcflow} based on FEniCS \cite{fenics} which makes use of P1 and P2 FE for the computation of the velocity and pressure fields, respectively. 
\begin{table}[h]
	\centering
	\begin{tabular}{lccc}
		\hline
		& FFR$_1$ & FFR$_2$ & FFR$_3$ \\
		\hline
		CBCFLOW & 0.94 & 0.68 & 0.69 \\
		Implicit hybrid FV/FE & 0.85 & 0.52 & 0.57 \\
		\hline
	\end{tabular}
	\caption{Coronary tree Test. FFR computationally predicted by the implicit hybrid FV/FE simulation and by the FEniCS simulation at three different locations of the coronary tree.  }
	\label{tab:FFR}
\end{table}
The FFR values obtained along the LAD, namely FFR$_2$ and FFR$_3$,  indicate, for both methodologies,  the presence of a stenosis able to induce myocardial ischemia, while the stenosis located at mid LCX associated to FFR$_1$ resulted to be not functionally significant. The findings are in agreement with conclusions based on the anatomical measurements, \cite{fossan2018,muller2021impact}. 
}

In this test we can clearly appreciate the advantage in terms of computational cost when using the new fully implicit hybrid scheme proposed in this paper rather than the semi-implicit hybrid method described in \cite{HybridMPI}.
Indeed, the new fully implicit hybrid FV/FE method employs \textcolor{black}{$22262.63$s} 
to reach a final time of $t= 3.0$s. 
Meanwhile, solving the convective part of the semi-implicit scheme with the first order semi-implicit hybrid FV/FE method, in which the time step is computed according to the convective CFL condition with CFL$=0.9$, takes $260640.0$s to reach only $t=7.4472\cdot 10^{-3}$s so more than one year would be needed to arrive at a final time of $3$s. Both simulations have been performed on 60 CPU cores of an Intel$^{\textrm{\textregistered}}$ Xeon-Gold 6140M cluster with 768~GB of RAM. 
The measured speed-up factor of the new fully implicit hybrid FV/FE scheme compared to the previous semi-implicit FV/FE method is therefore approximately 4716, i.e. more than three orders of magnitude. 
\textcolor{black}{In the complex 3D domain under study, performing a mesh reordering is crucial to get reasonable computational times. Figure \ref{fig:cortree_reorder} shows how the proposed reordering technique reorganizes the indexes of the control volumes in each CPU following the centerlines of the vessels, which correspond with the main flow direction, as shown in Figure~\ref{fig:CorTree_V}.}
\begin{figure}
	\centering
	\includegraphics[trim =10 10 10 10,clip,width=0.8\linewidth]{./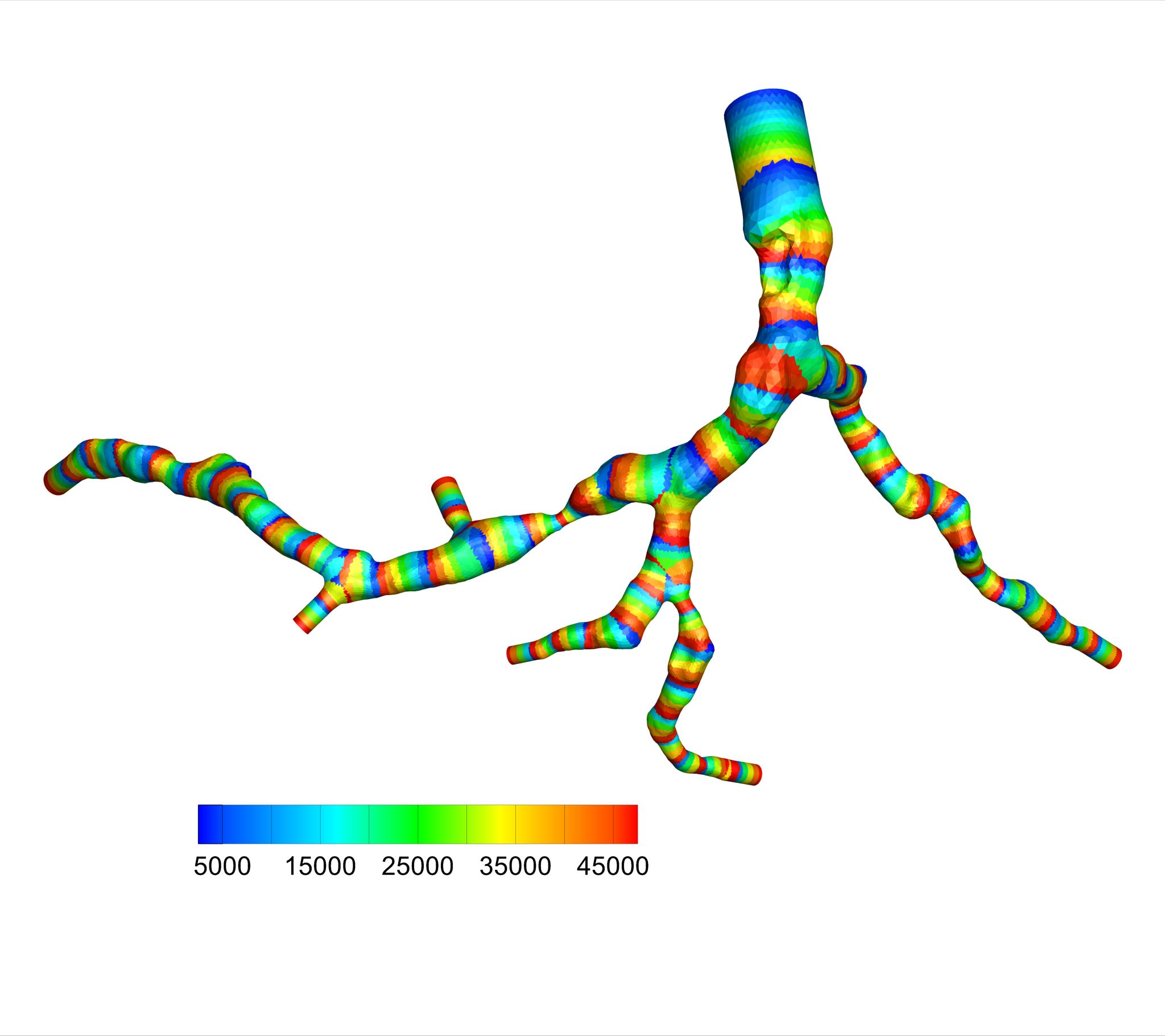}
	\caption{\textcolor{black}{Coronary tree test. Dual cell index after the reordering for a simulation carried out on $60$ CPUs.}}
	\label{fig:cortree_reorder}
\end{figure}

\section{Conclusions}\label{sec:conclusions}
A novel fully implicit hybrid FV/FE algorithm for the solution of the incompressible Navier-Stokes equations in two and three dimensions has been presented. To deal with complex geometries unstructured grids are considered and a staggered approach based on face-type dual meshes is proposed, circumventing typical issues arising for collocated grids, such as checkerboard phenomena. The splitting of the equations allows the decoupling of the pressure and momentum subsystems. Consequently, the convective-diffusion subsystem is solved at the aid of Crouzeix-Raviart elements for the viscous terms combined with an implicit finite volume discretization for the nonlinear convective terms, which, being unconditionally stable, allow larger time steps that the  explicit approach employed in the former algorithms proposed within the family of hybrid FV/FE methods. This feature is carefully assessed in the numerical results section, where, for instance, for the classical lid driven cavity test a CFL of 100 has been employed. To solve the corresponding nonlinear system an inexact Newton method has been used combined with an SGS-preconditioned BiCGStab or GMRES algorithm. As a consequence, linearised numerical flux functions have been introduced. From the simulations run in this paper it is clear that the use of a preconditioner is a key point to improve the convergence of the overall algorithm so the  symmetric-Gauss-Seidel (SGS) method is employed. To gain in computational efficiency also a simple but effective strategy for the reordering of the dual elements has been proposed. It is worthwhile to highlight that two different numerical flux functions have been considered: the implicit Rusanov flux, as well as the semi-implicit Ducros flux, which has been proven to be kinetic energy stable. 
Regarding the projection stage classical $\mathbb{P}^1$ FE are employed and the result obtained is used to correct the intermediate momentum obtained as the solution of the transport-diffusion subsystem. The promising results obtained for a set of classical fluid dynamic benchmarks assess the behaviour of the proposed methodology. Furthermore a real test case for the simulation of the flow in a coronary tree has been studied.

In future, we plan to apply this methodology to more realistic test cases in the field of blood flow studies as well as its coupling with a one-dimensional method to simulate blood flow in arteries and veins \cite{boileau2015,MuellerWB,CDT12,xiao2014,mynard2015,ioriatti2017,Mueller2014}. This may allow the simulation of complex networks of vessels with a special focus on analysing in detail the behaviour of the flow on particularly problematic areas. Further developments may include the implementation of more advanced reordering techniques and the extension of the implicit algorithm to deal with compressible or non-Newtonian flows.

\section*{Acknowledgements}
The Authors acknowledge the financial support of the Italian Ministry of Education, University and Research (MIUR) via the Departments of Excellence Initiative 2023--2027 attributed to DICAM of the University of Trento (grant L. 232/2016) and in the framework of the PRIN 2017 project \textit{Innovative numerical methods for evolutionary partial differential equations and applications} and of the Spanish Ministry of Science and Innovation, grant number PID2021-122625OB-I00.  AL acknowledges funding from the University of Trento (UNITN) for the PhD grant. SB and MD are members of the GNCS group of INdAM.

The authors would like to acknowledge the Department of Structural Engineering of the Norwegian University of Science and Technology (NTNU)  for providing the coronary tree geometry and data.
The authors thankfully acknowledges the computer resources at Finisterrae III and the technical support provided by CESGA, Spain, (RES-IM-2022-3-0017). 

\bibliographystyle{plain}
\bibliography{./mibiblio}

\end{document}